\def\draft{n}
\documentclass{amsart} 
\usepackage{latexsym,amssymb,graphicx,amscd,amsmath,url}

\def\printname#1{
	\if\draft y
		\smash{\makebox[0pt]{\hspace{-0.5in}
			\raisebox{8pt}{\tt\tiny #1}}}
	\fi
}

\def\lbl#1{\label{#1}\printname{#1}}

\newtheorem{thm}{Theorem}[section]
\newtheorem{lem}[thm]{Lemma}
\newtheorem{prop}[thm]{Proposition}
\newtheorem{cor}[thm]{Corollary} 
\newtheorem{de}[thm]{Definition} 
\newtheorem{rem}[thm]{Remark}

\newtheorem{conv}[thm]{Convention}
\newtheorem{notation}[thm]{Notation}
\newtheorem{rems}[thm]{Remarks}

\newcommand{\BSp}{{\mathcal{S}_p}}
\newcommand{\BS}{{\mathcal{S}}}
\newcommand{\BSplus}{{\mathcal{S}^+_p}}

\newcommand{\KK}{{\mathcal{K}}}
\newcommand{\hKK}{{\hat{\mathcal{K}}}}

\newcommand{\BZ}{{\mathbb{Z}}}
\newcommand{\BQ}{{\mathbb{Q}}}
\newcommand{\BO}{{\mathcal{O}}}

\newcommand{\BH}{{\mathcal{H}}}

\newcommand{\BF}{{\mathbb{F}}}

\newcommand{\Si}{{\Sigma}}

\newcommand{\cE}{{\mathcal{E}}}

\newcommand{\bb}{{\mathfrak{b}}}
\newcommand{\bt}{\tilde{\mathfrak{b}}}

\newcommand{\be}{{\beta}}
\newcommand{\up}{{\eta}}

\newcommand{\fo}{\mathfrak{o}}
\newcommand{\fe}{\mathfrak{e}}

\newcommand{\I}{{\mathrm I}}

\DeclareMathOperator{\varc}{C}
\DeclareMathOperator{\Image}{Image}

\DeclareMathOperator{\End}{End}
\DeclareMathOperator{\Hom}{Hom}
\DeclareMathOperator{\Id}{Id}
\DeclareMathOperator{\Sp}{Sp}
\DeclareMathOperator{\rank}{rank}
\DeclareMathOperator{\SL}{SL}

\DeclareMathOperator{\SU}{SU}
\DeclareMathOperator{\SO}{SO}
\DeclareMathOperator{\diag}{diag}
\DeclareMathOperator{\res}{res}

\setcounter{tocdepth}{1}

\begin{document}

\title[Irreducible factors   
of modular representations in TQFT] 
{Irreducible factors 
of modular representations of mapping
  class groups arising in Integral
TQFT}
 
\author{ Patrick M. Gilmer}
\address{Department of Mathematics\\
Louisiana State University\\
Baton Rouge, LA 70803\\
USA}
\email{gilmer@math.lsu.edu}
\thanks{The first author was partially supported by  NSF-DMS-0905736 }
\urladdr{\url{www.math.lsu.edu/~gilmer}}

\author{Gregor Masbaum}
\address{Institut de Math{\'e}matiques de Jussieu (UMR 7586 du CNRS)\\
Case 247, 4 pl. Jussieu,
75252 Paris Cedex 5\\
FRANCE }
\email{masbaum@math.jussieu.fr}
\urladdr{\url{www.math.jussieu.fr/~masbaum}}

\date{October 30, 2012}

\thanks{{ \em 2010 Mathematics Subject Classification.} Primary 57R56;  Secondary 57M99}
\keywords{Lollipop basis,  
Topological Quantum Field Theory, 
skein theory,  
symplectic group,
Verlinde formula}

\begin{abstract} We find  decomposition series of length at most two
  for 
modular representations in 
positive  
characteristic 
of mapping class groups of 
surfaces induced by an integral version of the Witten-Reshetikhin-Turaev $\SO(3)$-TQFT
 at the $p$-th root of unity, where $p$ is an odd prime.    
The dimensions of the irreducible factors are given by 
Verlinde-type 
formulas.
\end{abstract}

\maketitle
\tableofcontents

\section{Introduction} 

The  
Witten-Reshetikhin-Turaev quantum invariants of
$3$-mani\-folds fit into a Topological Quantum Field Theory (TQFT) in the sense of Atiyah and
Segal. This means in particular that they give rise to
finite-dimensional complex 
representations of (certain 
central extensions of)  mapping
class groups of surfaces.  While these {\em quantum representations} of
mapping class groups are in general not
irreducible (some decompositions into invariant subspaces were already
described in  \cite{BHMV2}), it was shown by Roberts \cite{R} that in
the case of surfaces without boundary, 
the representations arising from the $\SU(2)$-theory at level $k$ are
irreducible provided $k=p-2$ where $p$ is a prime. This irreducibility
plays an important role in Andersen's 
work on whether mapping class groups have Kazhdan's property T
\cite{A}.

It is a well-known phenomenon in representation theory that `natural'
representations of a group can often be defined both in
characteristic zero and in positive characteristic. 
 Typically, though, a representation which is irreducible
in characteristic zero 
gives rise  
in positive characteristic to a
representation which may no longer be
irreducible. It is the purpose of this paper to study this question
for  
mapping class group representations coming from a 
TQFT. Specifically, we consider the 
$\SO(3)$-TQFT 
at a primitive $p$-th root of unity, 
where $p\geq 5$
is a prime, because the theory of {\em Integral
TQFT} developed in \cite{G, GM, GM2} implies that 
this TQFT 
gives
rise in a natural way to modular representations of mapping class
groups in positive characteristic.  
We remark that modular representations in
 characteristic different from $p$  
coming from this $\SO(3)$-TQFT at the $p$-th root of unity 
were recently used in work of A. Reid and the second
author \cite{MRe}. But here  we will 
mainly  
study the modular representations in characteristic $p$ coming from
this  theory. 
We will show that these representations  often have a nontrivial
composition series with interesting irreducible factors. 

\section{Statement of the main results}
 \lbl{sec.intro}

Throughout the paper, we fix a prime $p\geq 5$, and let $\zeta_p$ be a primitive  $p$-th root
 of unity.  
We denote the corresponding ring of cyclotomic integers by
$$\BO=\BZ[\zeta_p]~.$$   
The 
theory of Integral $\SO(3)$-TQFT  
developed in \cite{G, GM, GM2} 
associates to a
compact oriented surface $\Si$ a free $\BO$-lattice ({\em i.e.,} 
 a free $\BO$-module of finite rank)
carrying a representation  
of an appropriate central extension
of the orientation-preserving mapping class group of the
surface. This theory may be thought of as an
integral refinement of the Reshetikhin-Turaev TQFT
associated with the Lie group $\SO(3)$, a version of which would be
obtained if one extends coefficients from $\BO$ to the cyclotomic
field $\BQ(\zeta_p)$. 
In this paper, we 
write
$\BS(\Si)$ for the Integral TQFT-lattice 
associated 
to $\Si$. 
 We shall review the definition of $\BS(\Si)$ in \S\ref{sec3}.  
We write
$\Gamma_{\Si} $ for the mapping class group of the surface, and
$\widetilde \Gamma_\Si^{++}$ for the central extension of
$\Gamma_{\Si} $ which 
we will use. Here we follow the construction of $\widetilde
\Gamma_\Si^{++}$ in \cite{GM3}. The group $\widetilde
\Gamma_\Si^{++}$ is a central extension of
$\Gamma_{\Si} $ by $\BZ$. It is generated by certain specific lifts
of Dehn twists, and we will describe in Proposition~\ref{act} 
how such a lift of a Dehn twist acts on $\BS(\Si)$. It will not be
necessary to know more about 
 $\widetilde
\Gamma_\Si^{++}$
in this paper.

For every ideal $I\subset \BO$, we have an induced representation  of
$\widetilde \Gamma_\Si^{++}$ on the  
      \hbox{$\BO\slash I$-}module 
$\BS(\Si)\slash I  
\BS(\Si)$. In this way, we can get modular representations of
$\widetilde \Gamma_\Si^{++}$, namely when   
      $\BO\slash I$ 
is a finite
field. We will see in Corollary~\ref{irred}  that in the
case where $\Si$ has at most one boundary
component,
such a modular representation, say in characteristic $\ell$, is always
irreducible except possibly if $\ell=p$. In this paper, however, we are
interested in non-trivial decompositions of the
representation. Therefore we consider the case where 
$I=(1-\zeta_p)$. It is well-known that 
this is a prime ideal in $\BO=\BZ[\zeta_p]$, and the quotient ring 
is the finite
field $\BF_p$. Thus    
we get 
a
representation  of $\widetilde \Gamma_\Si^{++}$ 
on the $\BF_p$-vector space 
\begin{equation}\lbl{Fdef}
F(\Si)
= \BS(\Si)\slash 
(1-\zeta_p)  
\BS(\Si)~.
\end{equation}
 This is the modular representation referred to in the
title of this paper. As shown  
in \cite[\S12]{GM3}, it factors
through a representation of the ordinary mapping class group
$\Gamma_\Si$. 
(We shall explain the reason for this in \S\ref{sec3}.\footnote{See
  footnote~\ref{foot9}.})     
In Corollary~\ref{maincor}, 
we will show that   
in the
case where
$\Si$ is either closed, or has one boundary
component, the modular representation $F(\Si)$
has a composition series with at most
two irreducible factors, which we will describe explicitly and whose
dimensions we will compute.

In order to state 
our results
more precisely, 
recall that if $\Si$ has
boundary, the quantum  representation 
depends
on the choice of a {\em color} on each boundary component, and we get
a 
representation of the mapping
class group of $\Si$ {\em rel.~boundary} (meaning that $\Gamma_\Si$
consists of orientation
preserving diffeomorphisms which are the identity on the boundary,
modulo isotopies which are also the identity on the boundary).
 In our
situation, the color on the boundary 
can be any even integer $2c$  satisfying $0\leq
2c\leq p-3$. We will indicate this in our notation by  writing $\Si_g(2c)$ for a genus $g$ surface with one
boundary component 
colored $2c$. As usual in TQFT, the case $2c=0$
corresponds  to the case where
$\Si$ has empty boundary. 
\begin{conv}
Throughout most of the paper, $g$ and $c$ are fixed, and we simply
write $\Si$ for 
$\Si_g(2c)$. 
\end{conv} 

The lattice $\BS(\Si)$ has a basis 
$\{\bb_\sigma\}$  
indexed by small admissible
colorings $\sigma$ of the edges of 
a lollipop tree \cite{GM}. This is pictured  in  Figure~\ref{lol} in the case
$g=3$.

\begin{figure}[h]
\includegraphics[width=1.8in]{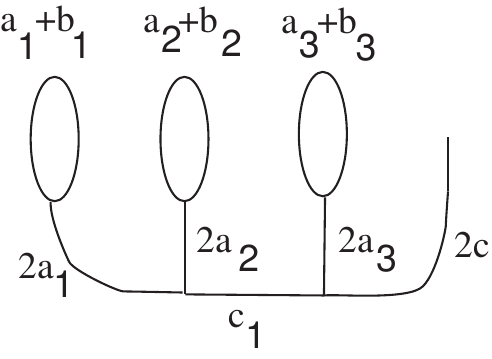}
\caption{Lollipop tree 
$T_g^{(2c)}$
}\lbl{lol}
\end{figure}

Here, the color $2c$ on the free (or {\em trunk}) edge is fixed, whereas the 
colors $2a_i, a_i+b_i, $ and $c_i$ on the other edges are numbers in $\{0,1, \ldots,
p-2\}$.  A coloring is 
{\em admissible}   
if whenever $i$, $j$ and $k$ are the colors of edges which meet at a vertex
\begin{eqnarray*}\label{adm1} i+j+k &\equiv& 0 \pmod 2\\
\label{adm2} |i-j|\ \leq& k &  \leq \ i+j\\
\label{adm3} i+j+k &\leq& 2p-4 \end{eqnarray*}
A coloring is 
{\em small}  
if the colors $a_i+b_i$ of the loop edges satisfy $0\leq a_i+b_i\leq
(p-3)/2$ 
(see \cite[\S3]{GM}).

\begin{conv} From now on, whenever we say
  coloring, we mean small admissible coloring.
\end{conv}

Here is a crucial definition for this paper.

\begin{de}\lbl{eoc} A coloring $\sigma$ is called  
even or odd
according as $c+\sum a_i$
  is  even or odd.
  \footnote{In the special case that 
$(g,c)=(2,0)$, we take $c+\sum a_i$ to be $2a_1$.}
 \end{de}

We write 
\begin{equation}\lbl{decomp}
\BS(\Si)=\BS^{ev}(\Si) \oplus \BS^{odd}(\Si)~,
\end{equation}
 where 
$\BS^{ev}(\Si)$ is the 
submodule
spanned by the $\bb_\sigma$
corresponding to even colorings, and $\BS^{odd}(\Si)$ is the  
submodule
spanned by the $\bb_\sigma$
corresponding to odd colorings.  
Note that while $\BS(\Si)$ is defined intrinsically and does not depend
on the choice of the lollipop tree, the submodules $\BS^{ev}(\Si)$ and
$\BS^{odd}(\Si)$ depend on this choice.

The following is the key result of this paper.

\begin{thm} 
\lbl{mainth} 
Let $\Si=\Si_g(2c)$. 
With respect to the decomposition
  (\ref{decomp}), the image of the group algebra $\BO[\widetilde
  \Gamma_\Si^{++}]$ in $\End_\BO(\BS(\Si))$ is  
\begin{equation}\lbl{MT}
\left[
\begin{array}{cc}
\End_\BO(\BS^{ev}(\Si))  &\multicolumn{1}{|c}{ 
(1-\zeta_p) 
\Hom_\BO(\BS^{odd}(\Si), \BS^{ev}(\Si))} \\ \hline
\Hom_\BO(\BS^{ev}(\Si), \BS^{odd}(\Si)) & \multicolumn{1}{|c}{ \End_\BO(\BS^{odd}(\Si))}
\end{array}\right] 
\end{equation} 
\end{thm}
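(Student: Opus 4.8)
The plan is to prove the two inclusions $\BI\subseteq\BA$ and $\BA\subseteq\BI$, where $\BI$ denotes the image of $\BO[\widetilde\Gamma_\Si^{++}]$ in $\End_\BO(\BS(\Si))$ and $\BA$ the $\BO$-submodule displayed in \eqref{MT}. (In the exceptional case $(g,c)=(2,0)$ of Definition~\ref{eoc}, $\BS^{odd}(\Si)=0$ and \eqref{MT} merely asserts $\BI=\End_\BO(\BS(\Si))$.) First I would observe, by a direct block-matrix computation, that $\BA$ is a unital $\BO$-subalgebra of $\End_\BO(\BS(\Si))$ — the only points being that the product of the lower-left block with the upper-right block lands in $(1-\zeta_p)\bigl(\End_\BO(\BS^{ev}(\Si))\times\End_\BO(\BS^{odd}(\Si))\bigr)$ and that the upper-right block squares to zero — so that $\BI\subseteq\BA$ will follow once every generator of $\widetilde\Gamma_\Si^{++}$ is shown to act by an element of $\BA$. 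For the reverse inclusion, since $\BI\subseteq\BA$ are finitely generated $\BO$-modules, it is enough to prove $\BI_{\mathfrak p}=\BA_{\mathfrak p}$ for every prime $\mathfrak p$ of $\BO$.

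\emph{The inclusion $\BI\subseteq\BA$.} By Proposition~\ref{act}, a generator which lifts a Dehn twist about a curve of the cut system underlying the lollipop tree acts diagonally on the basis $\{\bb_\sigma\}$, hence preserves colorings, in particular the parity of $c+\sum a_i$; its off-diagonal blocks then vanish and it lies in $\BA$. For the remaining generators one computes the matrix in $\{\bb_\sigma\}$ by recoupling (in terms of $6j$-symbols and twist coefficients) and must check that the coefficient of $\bb_\tau$ in the image of $\bb_\sigma$ lies in $(1-\zeta_p)\BO$ whenever $\sigma$ is odd and $\tau$ is even. This is precisely the statement that, modulo $(1-\zeta_p)$, every generator is block lower-triangular for the subspace $\overline{\BS}^{odd}(\Si)\subset F(\Si)$ (the image of $\BS^{odd}(\Si)$), with quotient $\overline{\BS}^{ev}(\Si)$; it is established in \cite[\S12]{GM3}, the combinatorial reason being that a parity change of $\sum a_i$ forces, along every recoupling path from $\sigma$ to $\tau$, a $6j$-symbol or a quantum integer lying in $(1-\zeta_p)\BO$.

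\emph{The inclusion $\BA\subseteq\BI$.} For a prime $\mathfrak p\neq(1-\zeta_p)$, $1-\zeta_p$ is a unit in $\BO_{\mathfrak p}$, so $\BA_{\mathfrak p}=\End_{\BO_{\mathfrak p}}(\BS(\Si)_{\mathfrak p})$ is a maximal order, and $\BI_{\mathfrak p}$ coincides with it because the reduction of $\BS(\Si)$ modulo $\mathfrak p$ is absolutely irreducible — a property of the integral TQFT lattice away from $p$ (see \cite{GM,GM2}; it also follows a posteriori from the present theorem). The case $\mathfrak p=(1-\zeta_p)$ carries the real content: by Nakayama's lemma it suffices to show $\BI$ surjects onto $\BA/(1-\zeta_p)\BA$, which is the block lower-triangular algebra with diagonal blocks $\End(\overline{\BS}^{ev}(\Si))$, $\End(\overline{\BS}^{odd}(\Si))$ and lower-left block $\Hom(\overline{\BS}^{ev}(\Si),\overline{\BS}^{odd}(\Si))$, enlarged by one square-zero two-sided ideal $E$ — the image of $(1-\zeta_p)\Hom_\BO(\BS^{odd}(\Si),\BS^{ev}(\Si))$ modulo $(1-\zeta_p)^2$, isomorphic as a bimodule to $\Hom(\overline{\BS}^{odd}(\Si),\overline{\BS}^{ev}(\Si))$. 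Set $K:=\ker\bigl(\Gamma_\Si\to\mathrm{GL}(\overline{\BS}^{ev}(\Si))\times\mathrm{GL}(\overline{\BS}^{odd}(\Si))\bigr)$; for $\gamma\in K$ the action on $\BS(\Si)$ has diagonal blocks $\equiv 1$ and off-diagonal blocks $C_\gamma$ (lower-left) and $(1-\zeta_p)M_\gamma$ (upper-right), with $\gamma\mapsto C_\gamma\bmod(1-\zeta_p)$ and $\gamma\mapsto M_\gamma\bmod(1-\zeta_p)$ both group homomorphisms into $\Hom(\overline{\BS}^{ev}(\Si),\overline{\BS}^{odd}(\Si))$, resp.\ $\Hom(\overline{\BS}^{odd}(\Si),\overline{\BS}^{ev}(\Si))$, with bimodule-stable image. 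Then: (i) modulo $(1-\zeta_p)$ the image of $\BI$ surjects onto the semisimple quotient $\End(\overline{\BS}^{ev}(\Si))\times\End(\overline{\BS}^{odd}(\Si))$, because $\overline{\BS}^{ev}(\Si)$ and $\overline{\BS}^{odd}(\Si)$ are absolutely irreducible and non-isomorphic $\BF_p[\Gamma_\Si]$-modules (the two ``interesting irreducible factors''); (ii) the image contains the whole lower-left block — a simple bimodule over that quotient — as soon as $F(\Si)$ does not split, e.g.\ as soon as some $\gamma\in K$ acts nontrivially on $F(\Si)$, which one sees by a direct recoupling computation; (iii) the image contains $E$ as soon as $M_{\bullet}\bmod(1-\zeta_p)$ is not identically zero on $K$, i.e.\ the off-diagonal action is nontrivial \emph{to first order in} $(1-\zeta_p)$ — again checked from the explicit recoupling description of the non-diagonal generators, after which simplicity of $E$ gives all of it. This yields the required surjection, so $\BI_{(1-\zeta_p)}=\BA_{(1-\zeta_p)}$, completing the proof.

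\emph{The main obstacle.} The hard part will be the prime $(1-\zeta_p)$, and within it step (iii): one must control the off-diagonal action of the mapping class group not merely modulo $(1-\zeta_p)$ but modulo $(1-\zeta_p)^2$ — exhibiting the first-order off-diagonal term and showing it does not vanish — and, for (i) and (ii), identify the two reduction factors precisely enough to know they are absolutely irreducible, non-isomorphic, and glued by a non-split extension. All of this hinges on a sufficiently explicit description of the non-diagonal generators through $6j$-symbols and twist coefficients, which is where the integral TQFT machinery of \cite{G,GM,GM2,GM3} is indispensable; by comparison the inclusion $\BI\subseteq\BA$ and the primes away from $p$ are routine.
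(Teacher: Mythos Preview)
Your plan is structurally very different from the paper's proof, and it has a genuine circularity that cannot be repaired without essentially reproving the theorem.

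The paper does \emph{not} argue by localization and Nakayama. It computes the image $\BI$ directly. Using Proposition~\ref{pw} (the powers of $\omega_+$ form a basis of $\BS(\mathcal T)$) together with the skein result that $v$-colored links are spanned by flat layered ones (Corollary~\ref{clflatv'}), it identifies $\BI$ with the $\BO$-span of the endomorphisms $Z(\Si\times\I,\varc(2c)\cup s)$ as $s$ runs over $v$-colored links (Lemma~\ref{ve}). A surgery argument (Lemma~\ref{sv2}) then replaces $\Si\times\I$ by $H_1\#H_2$, where these endomorphisms are indexed by a lollipop basis of $\BS(\partial H)$ for a genus $2g$ handlebody $H$ (Lemma~\ref{esvl}). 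Finally, Lemma~\ref{5.6} evaluates each such endomorphism in the orthogonal lollipop basis via the Hopf pairing and finds it is, up to a unit, either the elementary matrix $E_{\sigma_2,\sigma_1}$ or $h\,E_{\sigma_2,\sigma_1}$, the factor $h$ occurring exactly when $\sigma_1$ is odd and $\sigma_2$ even. Both inclusions $\BI\subseteq\BA$ and $\BA\subseteq\BI$ drop out of this single explicit computation; no prior irreducibility is used.

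The fatal gap in your argument is step~(i) at the prime $(1-\zeta_p)$: you assume that $\overline{\BS}^{ev}(\Si)$ and $\overline{\BS}^{odd}(\Si)$ are absolutely irreducible, non-isomorphic $\BF_p[\Gamma_\Si]$-modules. But that is precisely Corollary~\ref{maincor}, which in this paper is \emph{deduced from} Theorem~\ref{mainth}; none of \cite{G,GM,GM2,GM3} proves it independently. The same circularity infects your treatment of primes $\mathfrak p\neq(1-\zeta_p)$: the absolute irreducibility of the reduction you invoke is Corollary~\ref{irred}, again a consequence of the theorem (Roberts' result over $\mathbb C$ would at best handle almost all primes, and only for closed $\Si$). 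Your inclusion $\BI\subseteq\BA$ is also not established: \cite[\S12]{GM3} shows only that the action on $F(\Si)$ factors through $\Gamma_\Si$ (because $\kappa^4\equiv 1\bmod h$), not that $F^{odd}(\Si)$ is a subrepresentation, and the ``combinatorial reason'' you sketch about $6j$-symbols is an assertion, not a proof. In short, your strategy bootstraps from the irreducibility of the reduction factors, but those irreducibility statements are corollaries of Theorem~\ref{mainth}, not inputs available to prove it.
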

The proof of Theorem~\ref{mainth} will be given in 
\S\ref{sec3}--\ref{sec5}.
It has the following immediate corollary for the modular
representation $F(\Si)$  
of $\Gamma_\Si$ 
defined in (\ref{Fdef}). 
\begin{cor}\lbl{maincor1} Let \begin{equation*}\lbl{decomp2}
F(\Si)=F^{ev}(\Si) \oplus
F^{odd}(\Si)
\end{equation*}  be the decomposition induced from
(\ref{decomp}). With respect to this decomposition, the image of the group algebra $\BF_p[\Gamma_\Si]$
  in $\End_{\BF_p}(F(\Si))$ is
\begin{equation} \lbl{decomp3}
\left[
\begin{array}{cc}
\End_{\BF_p}(F^{ev}(\Si))  &\multicolumn{1}{|c}{0} \\ \hline
\Hom_{\BF_p}(F^{ev}(\Si), F^{odd}(\Si)) & \multicolumn{1}{|c}{ \End_{\BF_p}(F^{odd}(\Si))}
\end{array}\right]
\end{equation}
\end{cor}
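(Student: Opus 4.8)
The plan is to deduce Corollary~\ref{maincor1} directly from Theorem~\ref{mainth} by reduction modulo the prime ideal $(1-\zeta_p)$. The first step is to observe that $F(\Si) = \BS(\Si)/(1-\zeta_p)\BS(\Si)$ inherits the direct sum decomposition $F^{ev}(\Si)\oplus F^{odd}(\Si)$ from (\ref{decomp}), simply because $(1-\zeta_p)\BS(\Si) = (1-\zeta_p)\BS^{ev}(\Si)\oplus(1-\zeta_p)\BS^{odd}(\Si)$ respects the splitting; concretely $F^{ev}(\Si)=\BS^{ev}(\Si)/(1-\zeta_p)\BS^{ev}(\Si)$ and similarly for the odd part. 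Since reduction modulo $(1-\zeta_p)$ is a ring homomorphism $\BO\to\BF_p$, the image of $\BO[\widetilde\Gamma_\Si^{++}]$ in $\End_\BO(\BS(\Si))$ maps onto the image of $\BF_p[\Gamma_\Si]$ in $\End_{\BF_p}(F(\Si))$ (using that the $\BF_p$-representation factors through $\Gamma_\Si$, as recalled after (\ref{Fdef})). So it suffices to compute the image of the matrix algebra (\ref{MT}) under $-\otimes_\BO\BF_p$.

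Next I would check that each of the four blocks in (\ref{MT}) reduces as expected. For a free $\BO$-module $M$ of finite rank, $\End_\BO(M)\otimes_\BO\BF_p \cong \End_{\BF_p}(M/(1-\zeta_p)M)$ and similarly for $\Hom$; this is the standard fact that $\Hom$ commutes with flat base change for finitely generated projective modules, and $\BF_p$ is flat over $\BO$ (indeed $\BO/(1-\zeta_p)$ is just a quotient, but the relevant statement is that $\Hom_\BO(M,N)\otimes_\BO \BO/I \cong \Hom_{\BO/I}(M/IM, N/IN)$ for $M$ free of finite rank, which holds for any ideal $I$). Applying this to the diagonal blocks gives $\End_{\BF_p}(F^{ev}(\Si))$ and $\End_{\BF_p}(F^{odd}(\Si))$. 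The lower-left block $\Hom_\BO(\BS^{ev},\BS^{odd})$ reduces to $\Hom_{\BF_p}(F^{ev},F^{odd})$, matching the lower-left entry of (\ref{decomp3}). The upper-right block is $(1-\zeta_p)\Hom_\BO(\BS^{odd},\BS^{ev})$; under reduction mod $(1-\zeta_p)$ every element of this block is divisible by $(1-\zeta_p)$ and hence maps to $0$ in $\Hom_{\BF_p}(F^{odd},F^{ev})$. This produces precisely the $0$ in the upper-right entry of (\ref{decomp3}).

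The only subtlety — and the step I would be most careful about — is making sure the image of the reduced group algebra is \emph{all} of the reduction of (\ref{MT}), not just contained in it. This is automatic: if $R\subset \End_\BO(\BS(\Si))$ is the image of $\BO[\widetilde\Gamma_\Si^{++}]$, then the image of $\BF_p[\Gamma_\Si]$ in $\End_{\BF_p}(F(\Si))$ is exactly the image of $R$ under the natural surjection $\End_\BO(\BS(\Si))\twoheadrightarrow \End_{\BF_p}(F(\Si))$ (the latter map is surjective because $\BS(\Si)$ is $\BO$-free, so $\End_\BO(\BS(\Si))\otimes_\BO\BF_p\xrightarrow{\sim}\End_{\BF_p}(F(\Si))$). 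Since $R$ equals the full matrix algebra (\ref{MT}) by Theorem~\ref{mainth}, its image is the reduction of that matrix algebra, which by the block-by-block computation above is exactly (\ref{decomp3}). One should also note that the reduction preserves the block (lower-triangular) shape because it is $\BO$-linear and respects the decomposition $F^{ev}\oplus F^{odd}$, so no off-diagonal contamination can appear. This completes the proof; essentially the corollary is a formal consequence of Theorem~\ref{mainth} together with the flatness/freeness bookkeeping, with the vanishing of the upper-right block being the one genuinely content-bearing observation.
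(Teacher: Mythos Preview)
Your argument is correct and is exactly the approach the paper has in mind: the paper simply declares Corollary~\ref{maincor1} an ``immediate corollary'' of Theorem~\ref{mainth} without further proof, and what you have written is a careful unpacking of that immediacy via reduction modulo $(1-\zeta_p)$. The block-by-block reduction and the observation that the $(1-\zeta_p)$-multiple in the upper-right block dies modulo $h$ are precisely the intended content.
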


{}From (\ref{decomp3}) it is 
easy to get a composition series for $F(\Si)$:

\begin{cor}\lbl{maincor} Let $\Si=\Si_g(2c)$.  If $g\leq 1$ or if
  $(g,c)=(2,0)$, then  $F^{odd}(\Si)=0$ and the representation
 $F(\Si)$ 
is
irreducible. 
Otherwise, $F(\Si)$  has a composition series with two
irreducible factors:  the subspace $F^{odd}(\Si)$ is the unique irreducible
subrepresentation of $F(\Si)$, and 
the quotient
$F(\Si)\slash F^{odd}(\Si)$ is again irreducible. 
\end{cor}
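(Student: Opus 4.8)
The plan is to derive Corollary~\ref{maincor} directly from Corollary~\ref{maincor1}. The latter describes the image $\bar A$ of $\BF_p[\Gamma_\Si]$ in $\End_{\BF_p}(F(\Si))$ as the block lower‑triangular algebra~(\ref{decomp3}) with respect to $F(\Si)=F^{ev}(\Si)\oplus F^{odd}(\Si)$; since $\bar A$ is the image of the group algebra, the $\Gamma_\Si$-invariant subspaces of $F(\Si)$ are precisely the $\bar A$-submodules, so everything may be checked over $\bar A$. The proof then has two ingredients: a combinatorial determination of when $F^{odd}(\Si)=0$, and a short argument in the representation theory of a block‑triangular matrix algebra.

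First I would dispose of the combinatorial dichotomy. In the case $(g,c)=(2,0)$, Definition~\ref{eoc} (by its footnote) declares $c+\sum a_i$ to be $2a_1$, hence every small admissible coloring is even and $F^{odd}(\Si)=0$ by construction. For $g\le 1$ the lollipop tree $T_g^{(2c)}$ has at most one loop, and a direct inspection of the admissibility conditions at the vertex meeting the trunk shows that $c+\sum a_i$ is even for every small admissible coloring, so again $F^{odd}(\Si)=0$. Conversely, when $g\ge 2$ and $(g,c)\ne(2,0)$, a short check on the admissibility and smallness conditions (using $p\ge 5$) produces small admissible colorings of both parities, so both $F^{ev}(\Si)$ and $F^{odd}(\Si)$ are nonzero.

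Now the algebra. If $F^{odd}(\Si)=0$ then $F(\Si)=F^{ev}(\Si)$ and (\ref{decomp3}) gives $\bar A=\End_{\BF_p}(F(\Si))$; the natural module of a full matrix algebra over a field is simple, so $F(\Si)$ is irreducible. If instead $F^{odd}(\Si)\ne 0$ (hence also $F^{ev}(\Si)\ne 0$), I would read off from~(\ref{decomp3}) that $F^{odd}(\Si)$ is an $\bar A$-submodule on which $\bar A$ acts through its surjection onto the corner $\End_{\BF_p}(F^{odd}(\Si))$, so $F^{odd}(\Si)$ is irreducible; dually $\bar A$ acts on $F(\Si)\slash F^{odd}(\Si)\cong F^{ev}(\Si)$ through the surjection onto $\End_{\BF_p}(F^{ev}(\Si))$, so this quotient is irreducible. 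Hence $0\subsetneq F^{odd}(\Si)\subsetneq F(\Si)$ is a composition series with exactly the two claimed factors. To see $F^{odd}(\Si)$ is the \emph{unique} irreducible subrepresentation I would show the sequence $0\to F^{odd}(\Si)\to F(\Si)\to F^{ev}(\Si)\to 0$ does not split: a submodule complement $W$ would be $\bar A$-invariant, but applying the idempotent of $\bar A$ with only nonzero block $\Id_{F^{ev}(\Si)}$ forces $W=F^{ev}(\Si)$, and then applying an element of $\bar A$ with only nonzero block a nonzero map in $\Hom_{\BF_p}(F^{ev}(\Si),F^{odd}(\Si))$ (these exist since both summands are nonzero) carries $W$ out of itself. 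Given non‑splitness, any nonzero $\bar A$-submodule $U$ satisfies $U\cap F^{odd}(\Si)=F^{odd}(\Si)$ or $U\cap F^{odd}(\Si)=0$; the latter would make $U\to F(\Si)\slash F^{odd}(\Si)$ an isomorphism onto the simple quotient, hence produce a splitting, which is excluded, so $F^{odd}(\Si)\subseteq U$. Thus $F^{odd}(\Si)$ is the unique minimal, hence the unique irreducible, subrepresentation.

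All of the algebra above is formal once Corollary~\ref{maincor1} (equivalently Theorem~\ref{mainth}) is granted, which the problem allows. The only step demanding genuine work rather than pure algebra is the combinatorial determination of exactly when $F^{odd}(\Si)$ vanishes; this is the main, and essentially the only, obstacle in this deduction.
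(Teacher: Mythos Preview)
Your proposal is correct and follows essentially the same approach as the paper's proof. Both deduce the result from Corollary~\ref{maincor1} by observing that the image algebra surjects onto each diagonal corner (giving irreducibility of $F^{odd}(\Si)$ and of the quotient), and both use the projection idempotent $\Id_{F^{ev}(\Si)}\oplus 0$ together with a nonzero off-diagonal element of $\Hom_{\BF_p}(F^{ev}(\Si),F^{odd}(\Si))$ to rule out any subrepresentation disjoint from $F^{odd}(\Si)$; you package this last step as ``the short exact sequence does not split,'' while the paper argues by a direct case split, but the content is the same. The paper likewise only asserts (without detailed verification) the combinatorial dichotomy about when odd colorings exist, so your treatment of that point is at the same level of detail.
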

We will explain how one deduces this corollary from (\ref{decomp3}) 
in \S\ref{sec2}.

Let us denote the dimensions of  these irreducible factors
  by
\begin{align*}
\fo_g^{(2c)} &= \dim_{\BF_p}(F^{odd}(\Si))=  \rank_\BO(\BS^{odd}(\Si))\\
\fe_g^{(2c)} &= \dim_{\BF_p}(F(\Si)\slash F^{odd}(\Si))=  \rank_\BO(\BS^{ev}(\Si))
\end{align*}
where $\Si=\Si_g(2c)$. 
These numbers are simply the  
numbers
of odd or even colorings of the lollipop
tree  
$T_g^{(2c)}$ 
shown in Figure \ref{lol}. Here, the superscript indicates that the
trunk color is fixed to be $2c$ in the colorings we are counting. We
also define 
$$D_g^{(2c)}= \fe_g^{(2c)} +
\fo_g^{(2c)}, \ \ \ \delta_g^{(2c)}= \fe_g^{(2c)} -
\fo_g^{(2c)}~.$$  In the case $c=0$ 
corresponding  to a surface without boundary, we simply
write $\fe_g$ for $\fe_g^{(0)}$, and similarly for the other numbers just
defined. 

We will give recursion formulas for $\fo_g^{(2c)}$ and $
\fe_g^{(2c)}$ in \S\ref{dim}, where we will also prove the following
explicit expression for $\delta_g^{(2c)}$:

\begin{thm}\lbl{new-Verl} One has 
\begin{equation}\lbl{d-Verl} (-1)^c \delta_g^{(2c)}=  \frac{4^{1-g}}{p}
  \sum_{j=1}^{(p-1)/2}  \left(\sin\frac{\pi j (2c+1)}{p}\right) \left(\sin\frac{\pi j}{p}\right) \left(\cos\frac{\pi j}{p} \right)^{-2g}.
\end{equation} 
\end{thm}

We think of (\ref{d-Verl}) as an analog of the celebrated Verlinde 
formula for the dimension of the $\SO(3)$-TQFT vector space
$V_p(\Si)$ (in the notation of
\cite{BHMV2}).  In our current notation, this dimension is
$$\dim V_p(\Si)= 
\rank_\BO(\BS(\Si))= \dim_{\BF_p}(F(\Si))=
 \fo_g^{(2c)}+
\fe_g^{(2c)}= 
D_g^{(2c)}, $$ and the Verlinde formula (for $p$ odd)
 says\footnote{The formula for $D_g$ obtained by substituting $c=0$ in
  (\ref{Verl}) is different from the formula for $d_g(p)$ in
  \cite[p.~896]{BHMV2}, but they 
have  the same sum when $p$ is odd.} 
\begin{equation}\lbl{Verl} 
D_g^{(2c)}=\left( \frac p 4\right)^{g-1} \sum_{j=1}^{(p-1)/2}
\left(\sin \frac { \pi j(2c+1)}{p}\right) \left( \sin \frac { \pi j}{p} \right)^{1-2g}
.\end{equation} 
One can, of course,  combine (\ref{d-Verl}) and (\ref{Verl}) to get similar expressions for the numbers $\fe_g^{(2c)}$ and
$\fo_g^{(2c)}$.  
Note 
also   
that if one substitutes
 $p/\sin^{2}(\pi j/p)$ for $1/\cos^{2}(\pi j/p)$
in the right hand side of our formula (\ref{d-Verl}), one gets the right hand side of the Verlinde formula
(\ref{Verl}). This will be explained in the proof of 
 Theorem~\ref{new-Verl} in \S\ref{dim.2}.

 It is well-known  that for $g\geq 2$, the number $D_g$ 
when viewed as a function of $p$ is a
polynomial of degree $3g-3$ in $p$ 
      (see for example  Zagier \cite{Z} or \cite{BHMV2}).
Moreover, the number  $D_g^{(2c)}$ is a  polynomial
in $p$ and $c$ of total degree $3g-2$. 
(This follows from a residue formula for $D_g^{(2c)}$ which we shall
give in 
\S\ref{dim.4}.) 
  We have a similar result  for $\delta_g^{(2c)}$.  To state
it, let $B_n$ be the Bernoulli numbers defined by 
\begin{equation}\lbl{bernn}
\frac t {e^t-1}= \sum_{n=0}^\infty B_n
\frac {t^n} {n!}~.
\end{equation}
 Note $B_1=-1/2$ and $B_k=0$ for all odd $k\geq
3$. It will be convenient to denote (see \cite[p.~231]{IR})
$$\widetilde B_n=\frac 1 2 \frac{(-1)^{n+1} B_{2n}}{(2n)!}=
\frac{\zeta(2n)}{(2\pi)^{2n}}~.$$

 \begin{thm}\lbl{deltaprop} For 
 $g\geq 1$,
 we have that 
  $\delta_g^{(2c)}$
  is an (inhomogeneous) polynomial in $c$ and $p$ of total degree
  $2g-1$. The 
homogeneous part of degree $2g-1$
in this polynomial is given by
\begin{equation}\lbl{LG} 
(-1)^{g-1} \sum_{k=1}^{2g} \,2(2^k-1)\, \frac{B_k}{k!}\,
\frac{c^{2g-k}}{(2g-k)!}\, p^{k-1}~.
\end{equation}
 In particular, for any fixed value of $c\in \{0,1,\ldots,(p-3)/2\}$,  
the polynomial  $\delta_g^{(2c)}(p)$ has degree $2g-1$ in $p$.
Moreover, the leading coefficient of this polynomial 
is \hbox{$4(2^{2g}-1)\widetilde B_g$.}
\end{thm}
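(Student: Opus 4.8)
\emph{Approach.} The plan is to deduce Theorem~\ref{deltaprop} from the explicit formula~(\ref{d-Verl}) of Theorem~\ref{new-Verl} by a residue computation of the same type as the one to be used in \S\ref{dim.4} for $D_g^{(2c)}$. First I would rewrite~(\ref{d-Verl}) as a sum over $p$-th roots of unity. Setting $z=\zeta_p^j$ and using $\sin A\sin B=\tfrac12(\cos(A-B)-\cos(A+B))$ gives $\sin\tfrac{\pi j(2c+1)}{p}\sin\tfrac{\pi j}{p}=\tfrac14(z^c+z^{-c}-z^{c+1}-z^{-c-1})$, while the staircase pattern of the ceilings $\lceil(p-1-2k)/4\rceil$ (treated separately for $p\equiv 1$ and $p\equiv 3\pmod 4$) yields the identity
\begin{equation*}
(2-z-z^{-1})\,\Lambda_j=(p-1)+\!\!\sum_{1\le|k|\le(p-1)/2}\!\!(-1)^k(p-2|k|)\,z^k .
\end{equation*}
Carrying the relation $z^p=1$ through the resulting geometric and arithmetic sums exhibits $\Lambda_j$ as $R_p(\zeta_p^j)$ for an explicit rational function $R_p$ whose only poles lie at $z=1,-1,0,\infty$, with singular part $\tfrac{-pz}{z^2-1}$; the new feature compared with the Verlinde formula, where $\mu_j=\tfrac{-pz}{(z-1)^2}$ has a double pole at $z=1$, is that $R_p$ has only simple poles, one at $z=1$ and one at $z=-1$. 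Using also the invariance of the summand of~(\ref{d-Verl}) under $j\leftrightarrow p-j$, one gets
\begin{equation*}
(-1)^c\delta_g^{(2c)}=\frac1{2p}\sum_{j=1}^{p-1}H(\zeta_p^j),\qquad H(z)=(z^c+z^{-c}-z^{c+1}-z^{-c-1})\,R_p(z)^g .
\end{equation*}

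Next I would apply the standard roots-of-unity residue identity. The function $\widetilde\Phi(z)=\tfrac{pz^{p-1}}{z^p-1}-\tfrac1{z-1}$ has a simple pole of residue $1$ at each $\zeta_p^j$ with $j=1,\dots,p-1$, is regular at $z=1$, and has Laurent expansions at $z=1,-1,0,\infty$ whose coefficients are polynomials in $p$ (readily computed from $\tfrac{t}{e^t-1}=\sum_n B_nt^n/n!$ after $z=e^t$). Since the residues of $H\,\widetilde\Phi$ over $\mathbb P^1$ sum to zero and $H$ has poles only at $z=1,-1,0,\infty$, we have $\sum_{j=1}^{p-1}H(\zeta_p^j)=-\sum_{\alpha\in\{1,-1,0,\infty\}}\operatorname{Res}_{z=\alpha}[H\,\widetilde\Phi]$. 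Each of the four residues is a polynomial in $c$ and $p$, and the $(-1)^{(p-1)/2}$-dependent contributions that arise (from the monomial $z^{(p+1)/2}$ appearing in $R_p$) must cancel among the residues, since $\delta_g^{(2c)}$ is a genuine polynomial in $p$. A bookkeeping of the pole orders — using that $z^c+z^{-c}-z^{c+1}-z^{-c-1}$ vanishes to order $2$ at $z=1$ but not at $z=-1$ — together with this cancellation shows that $\delta_g^{(2c)}$ is an inhomogeneous polynomial in $c$ and $p$ of total degree $2g-1$.

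For the homogeneous part of degree $2g-1$, only the contributions of the poles at $z=\pm1$ survive, and there one may replace $R_p$ by its singular part $\tfrac{-pz}{z^2-1}$. Substituting $z=e^{t}$ near $z=1$ and $z=-e^{t}$ near $z=-1$ turns $(z^2-1)^{-g}$ into $(e^{2t}-1)^{-g}$ and turns the factor $\tfrac{pz^{p-1}}{z^p-1}$ of $\widetilde\Phi$ near $z=-1$ into $\tfrac{-pe^{(p-1)t}}{e^{pt}+1}$; keeping only the terms of top total degree, the computation reduces to a coefficient extraction identifying the degree-$(2g-1)$ part of $\delta_g^{(2c)}$ with $2(-1)^g\,[t^{2g}]\,\dfrac{t\,e^{ct}}{e^{pt}+1}$. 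Expanding $\tfrac{t}{e^t+1}=\tfrac{t}{e^t-1}-\tfrac{2t}{e^{2t}-1}=\sum_n(1-2^n)B_n\tfrac{t^n}{n!}$ (so that $[t^k]\,\tfrac{t}{e^{pt}+1}=(1-2^k)B_k\,p^{k-1}/k!$) and multiplying by $e^{ct}$ gives exactly~(\ref{LG}). Finally, fixing $c$ and reading off the coefficient of $p^{2g-1}$, namely the $k=2g$ term $(-1)^{g-1}\,2(2^{2g}-1)B_{2g}/(2g)!$, and using $2\widetilde B_g=(-1)^{g-1}B_{2g}/(2g)!$, one obtains the leading coefficient $4(2^{2g}-1)\widetilde B_g$.

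\emph{Main obstacle.} The hard part is the bookkeeping in the last two steps: controlling the pole orders and the degrees of the various residue contributions precisely enough to see that the cancellations leave total degree exactly $2g-1$, and tracking all the signs and constants in the third step so as to land on~(\ref{LG}). In particular the overall sign $(-1)^{g-1}$ there traces back to the $g$-th power of the denominator $2-z-z^{-1}=-4\sinh^2(t/2)$ together with the signs introduced by the reduction $z^p=1$ and by assembling the contributions at $z=1$ and $z=-1$.
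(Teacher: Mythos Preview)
Your route is genuinely different from the paper's, and while the overall strategy could be made to work, there is a real gap as written. The paper does \emph{not} go through Theorem~\ref{new-Verl} at all: it proves Theorem~\ref{deltaprop} by a short induction on $g$ directly from the recursion
\[
\delta_{g+1}^{(2c)}=\sum_{a=0}^{d-1}(d-a)\,\delta_g^{(2a)}+\sum_{a=0}^{c-1}(a-c)\,\delta_g^{(2a)}
\]
(derived from~(\ref{recur1})), together with Faulhaber's formula $\sum_{n=0}^{N-1}n^k=N^{k+1}/(k+1)+O(N^k)$. The degree bound $2g-1$ is then immediate, and feeding a monomial $c^{2g-k}p^{k-1}$ of $L_g$ through the recursion produces the corresponding term of $L_{g+1}$; the induction step for the top coefficient $k=2g+2$ closes using the Bernoulli identity $\sum_{k=0}^{2g+2}2^k(2^k-1)\binom{2g+2}{k}B_k=0$. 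This is far more elementary than a residue computation and avoids the delicate bookkeeping you flag as the main obstacle.

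The specific gap in your argument is the sentence ``the $(-1)^{(p-1)/2}$-dependent contributions \dots\ must cancel among the residues, since $\delta_g^{(2c)}$ is a genuine polynomial in $p$.'' That is circular: polynomiality in $p$ is exactly what you are trying to prove, and nothing you have established so far rules out a closed form of the shape $A(p,c)+(-1)^{(p-1)/2}B(p,c)$ with $B\neq 0$. To make your approach rigorous you would have to verify that cancellation directly from the residues at $0$ and $\infty$, which is precisely the bookkeeping you have not carried out. By contrast, your final computation for the top-degree part is correct and rather nice: the identification of $L_g$ with $2(-1)^g[t^{2g}]\bigl(t\,e^{ct}/(e^{pt}+1)\bigr)$ via $\tfrac{t}{e^t+1}=\sum_n(1-2^n)B_n t^n/n!$ does reproduce~(\ref{LG}) exactly, and this generating-function packaging is a genuine bonus over the paper's term-by-term induction.
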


This should be compared to the fact that the leading coefficient of
the polynomial $D_g(p)$ is $\widetilde B_{g-1}$ (see Formula (6) in Zagier
\cite{Z}\footnote{Our 
$D_g$ is $2^{-g}$ times
Zagier's $D(g,p)$, which is the dimension of the $\SU(2)$-TQFT vector space
 $V_{2p}(\Si_g)$. This fact comes from a tensor product formula, see  
 \cite[Theorem~1.5]{BHMV2}.  
The  $V_{2p}$-theory
         corresponds in Conformal Field Theory to $\SU(2)$ at level
         $k=p-2$.} 
or Corollary~\ref{1.10} below).   
 This leading coefficient is closely
related to the volume of a certain moduli space \cite[\S3]{Wi}.  
Moreover, the dimension of this moduli space is equal to the degree of the
polynomial $D_g(p)$, by the Riemann-Roch formula. 
We wonder whether there is an algebro-geometric interpretation of the
degree and the leading coefficient of the 
polynomial $\delta_g(p)$. We remark that in the related but different
situation of $\SU(2)$-TQFT at even level, a similar question about an
algebro-geometric 
interpretation of the difference of the number of even and odd
colorings (albeit for a quite different notion of even and odd colorings)  was
answered affirmatively in \cite{AM} in terms of certain geometrically
defined involutions on
the moduli space.

Theorem~\ref{deltaprop} will be proved in \S\ref{dim.3}. It implies that the dimensions  $\fo_g^{(2c)}$
and $\fe_g^{(2c)}$   of our irreducible factors  $F^{odd}(\Si)$ and
$F(\Si)\slash F^{odd}(\Si)$ are polynomials in $p$ and $c$ which
coincide with the polynomial $D_g^{(2c)}/2$ in degrees higher than
$2g-1$. This leads to the following corollary, which will be proved in \S\ref{dim.4}.

\begin{cor}\lbl{1.9} For $g\geq 2$, both $\fe_g^{(2c)}$ and
  $\fo_g^{(2c)}$ are polynomials of total degree $3g-2$ in $p$ and
  $c$.  For $g\geq 3$, the leading terms of these polynomials  are given
  by 
\begin{equation}\lbl{1.99} \fe_g^{(2c)} \equiv \fo_g^{(2c)}\equiv
  \frac{(-1)^g}{4} \sum_{k=0}^{2g-2} \frac{B_k}{k!}\frac
  {c^{2g-2-k}}{(2g-2-k)!} \left(1+\frac{2c}{2g-1-k}\right)p^{g-1+k}
\end{equation}
where $\equiv$ means equality up to addition of a polynomial in $p$
and $c$ of total degree $\leq 3g-4$.
\end{cor}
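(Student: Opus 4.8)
The plan is to reduce everything to an analysis of the Verlinde dimension $D_g^{(2c)}=\fe_g^{(2c)}+\fo_g^{(2c)}$, via the identities
\[
\fe_g^{(2c)}=\tfrac12\bigl(D_g^{(2c)}+\delta_g^{(2c)}\bigr),\qquad
\fo_g^{(2c)}=\tfrac12\bigl(D_g^{(2c)}-\delta_g^{(2c)}\bigr).
\]
We already know that $D_g^{(2c)}$ is a polynomial in $p$ and $c$ of total degree $3g-2$, and by Theorem~\ref{deltaprop} that $\delta_g^{(2c)}$ is a polynomial of total degree $2g-1$. Since $g\geq 2$ forces $2g-1<3g-2$, the two half-sums above are polynomials of total degree exactly $3g-2$. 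This proves the first assertion.

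For the leading terms, assume $g\geq 3$, so that $2g-1\leq 3g-4$. Then $\delta_g^{(2c)}/2$ is a polynomial of total degree $\leq 3g-4$, hence is absorbed into the error term allowed by ``$\equiv$''; thus $\fe_g^{(2c)}\equiv\fo_g^{(2c)}\equiv\tfrac12 D_g^{(2c)}$ modulo polynomials of total degree $\leq 3g-4$, and it remains to compute the homogeneous components of $\tfrac12 D_g^{(2c)}$ in degrees $3g-2$ and $3g-3$. For this I would start from the Verlinde formula~(\ref{Verl}), write $\sin(\pi j(2c+1)/p)/\sin(\pi j/p)=\sum_{m=-c}^{c}e^{2\pi i m j/p}$, and convert the resulting finite sum over $j$ into a sum of residues; using the symmetry $\zeta\mapsto\zeta^{-1}$ of the set of $p$-th roots of unity, this reduces to evaluating sums of the form $\sum_{\zeta^p=1,\ \zeta\neq 1}\bigl(\zeta^{g+c}-\zeta^{g-1-c}\bigr)(1-\zeta)^{-(2g-1)}$ (compare the residue formula for $D_g^{(2c)}$ announced in \S\ref{dim.4}). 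By the residue theorem applied with the kernel $pz^{p-1}/(z^p-1)$, such a sum is, up to a contribution at $z=\infty$, the residue at $z=1$ of an explicit rational function; substituting $z=e^{-w}$ one finds that this residue is a Laurent coefficient whose evaluation is governed by $\dfrac{w}{e^{w}-1}=\sum_{n\geq 0}B_n\dfrac{w^n}{n!}$, which is the source of the Bernoulli numbers in~(\ref{1.99}), while the presence of a term together with (essentially) its antiderivative in $c$ --- the factor $1+\frac{2c}{2g-1-k}$ --- reflects the two monomials $\zeta^{g+c}$ and $\zeta^{g-1-c}$ in the numerator. Collecting the terms of total degrees $3g-2$ and $3g-3$, invoking the Bernoulli recursion $\sum_{l=0}^{k-1}\binom{k}{l}B_l=0$ ($k\geq 2$), and noting that the nominal degree-$(3g-2)$ term with $k=2g-1$ vanishes because $B_{2g-1}=0$ for $g\geq 2$, one arrives at~(\ref{1.99}).

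The main obstacle is this last bookkeeping step: organizing the residue/Laurent expansion of $D_g^{(2c)}$ so that one can cleanly separate the contributions of total degree $\geq 3g-3$ from those of total degree $\leq 3g-4$ --- in particular controlling the Euler--Maclaurin-type corrections (the Laurent coefficients of positive order in $w$, and the residue at $z=\infty$ that appears when $c$ is large relative to $g$) --- and carrying the Bernoulli normalization through without sign errors, including the convention $B_1=-1/2$ and the telescoping of the sum over $m\in\{-c,\dots,c\}$. Once the correct residue formula for $D_g^{(2c)}$ is in hand, the remaining steps are formal.
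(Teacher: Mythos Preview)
Your approach is correct and essentially the same as the paper's: reduce to $\tfrac12 D_g^{(2c)}$ using Theorem~\ref{deltaprop} (noting that $2g-1\leq 3g-4$ for $g\geq 3$), then extract the degree $\geq 3g-3$ part of $D_g^{(2c)}$ via a residue formula whose expansion is governed by the Bernoulli generating function $t/(e^t-1)$. The paper packages the residue step slightly more cleanly as Proposition~\ref{6.6}, applying the residue theorem directly to the $1$-form $(z^{2c+1}-z^{-2c-1})\,dz\big/\bigl(z(z^p-1)(z-z^{-1})^{2g-1}\bigr)$ rather than first expanding $\sin((2c+1)\pi j/p)/\sin(\pi j/p)$ as a character sum, but this is only an organizational difference; the ``bookkeeping obstacle'' you flag is exactly what the paper dismisses as ``easy to get an explicit formula for the leading order terms'' once Proposition~\ref{6.6} is in hand.
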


Knowing the terms of total degree $3g-2$ and $ 3g-3$ in $\fe_g^{(2c)}$ and
  $\fo_g^{(2c)}$ (as given in the preceding Corollary~\ref{1.9})
  allows one to compute the leading
  term of $\fe_g^{(2c)}$ and
  $\fo_g^{(2c)}$ when viewed as polynomials in $p$, with $c$ held 
  fixed. Corollary~\ref{1.9} implies that these leading terms are
  given as follows: 

\begin{cor}\lbl{1.10} Assume $c\in
  \{0,1,\ldots,(p-3)/2\}$ is fixed. If $g\geq 3$, then both $\fe_g^{(2c)}$ and
  $\fo_g^{(2c)}$ are polynomials of degree $3g-3$ in $p$ with leading
  coefficient $(c+\frac 1 2)\widetilde B_{g-1}$. 
\end{cor}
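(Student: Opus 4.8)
The plan is to read off the statement from Corollary~\ref{1.9}. Fix $c\in\{0,1,\ldots,(p-3)/2\}$ and regard $\fe_g^{(2c)}$ as a polynomial in $p$ alone. Since $g\geq 3$, Corollary~\ref{1.9} tells us that $\fe_g^{(2c)}$ agrees with the right-hand side of (\ref{1.99}) up to a polynomial in $p$ and $c$ of total degree $\leq 3g-4$; once $c$ is fixed, this error term becomes a polynomial in $p$ of degree $\leq 3g-4$, hence contributes nothing to the coefficient of $p^{j}$ for any $j\geq 3g-3$. So the first step is simply to locate the highest power of $p$ occurring in the explicit polynomial (\ref{1.99}) and to compute its coefficient.

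In (\ref{1.99}) the $k$-th summand carries the factor $p^{\,g-1+k}$ with $0\leq k\leq 2g-2$, so the highest power of $p$ that appears is $p^{\,3g-3}$, coming only from the term $k=2g-2$. (One should check in passing that no $p^{\,3g-2}$ monomial is hidden inside the unspecified lower-order error: by Corollary~\ref{1.9} the total degree of $\fe_g^{(2c)}$ in $(p,c)$ is $3g-2$, so a $p^{\,3g-2}$ monomial would be homogeneous of total degree $3g-2$ and would therefore show up among the two top homogeneous parts displayed in (\ref{1.99}) --- and it does not.) Hence $\fe_g^{(2c)}$ has degree at most $3g-3$ in $p$, and the coefficient of $p^{\,3g-3}$ is read off from the $k=2g-2$ term of (\ref{1.99}):
$$\frac{(-1)^g}{4}\,\frac{B_{2g-2}}{(2g-2)!}\,\frac{c^{0}}{0!}\,\Bigl(1+\frac{2c}{1}\Bigr)\;=\;\Bigl(c+\tfrac12\Bigr)\,\frac{(-1)^g B_{2g-2}}{2\,(2g-2)!}~.$$
Using the definition $\widetilde B_n=\tfrac12(-1)^{n+1}B_{2n}/(2n)!$ with $n=g-1$, this equals $(c+\tfrac12)\,\widetilde B_{g-1}$. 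Since $\widetilde B_{g-1}=\zeta(2g-2)/(2\pi)^{2g-2}>0$ and $c+\tfrac12>0$, the coefficient is nonzero, so in fact $\fe_g^{(2c)}$ has degree exactly $3g-3$ in $p$ with leading coefficient $(c+\tfrac12)\widetilde B_{g-1}$. The identical argument applies to $\fo_g^{(2c)}$, since (\ref{1.99}) records the same two top homogeneous parts for both; as a sanity check, setting $c=0$ and adding recovers the classical leading coefficient $\widetilde B_{g-1}$ of the Verlinde polynomial $D_g(p)$.

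I do not expect any real obstacle: the statement is a purely formal consequence of Corollary~\ref{1.9}, the one point needing a moment of care being that the lower-order error in (\ref{1.99}) cannot disturb the $p^{\,3g-3}$ coefficient, which is immediate from the total-degree bound $\leq 3g-4$. All the substantive work sits in Corollary~\ref{1.9} itself --- and, behind it, in Theorem~\ref{deltaprop} together with the polynomiality (in $p$ and $c$) of the Verlinde numbers $D_g^{(2c)}$ --- which I would prove first.
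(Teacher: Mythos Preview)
Your argument is correct and follows exactly the route the paper indicates: the paper states that Corollary~\ref{1.9} implies Corollary~\ref{1.10} by reading off the highest power of $p$ in (\ref{1.99}), and you have carried out precisely that computation, including the check that no $p^{3g-2}$ term can appear and that the resulting coefficient is nonzero.
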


For $g=2$, we will write down $\fe_2^{(2c)}$ and
  $\fo_2^{(2c)}$ in Eqns. (\ref{fex}) and (\ref{fox}) in 
  \S\ref{dim.1}.  In this case, the leading terms  are slightly different: for fixed
$c$, one has that $\fe_2^{(2c)}$ and
  $\fo_2^{(2c)}$ are polynomials of degree $3$ in $p$, with leading
  coefficient 
  $(c+1)/24$ and $c/24$,
  respectively, except for
  $\fo_2=\fo_2^{(0)}$ which is identically zero. 

In \S\ref{7.5}, we will give a residue  formula for $\delta_g^{(2c)}$
similar to the residue formula for $D_g^{(2c)}$ mentioned above.

\begin{rems}{\em (1) One can show \cite{M}  
that the modular representations 
$F^{odd}(\Si)$ and
$F(\Si)\slash F^{odd}(\Si)$ of the mapping class group 
  $\Gamma_\Si$ 
factor through the natural map  
 $$\Gamma_\Si \twoheadrightarrow \Sp(2g,\BZ)\twoheadrightarrow \Sp(2g,\BF_p)~.$$
 (But this is in general not the case for the representation 
$F(\Si)$ itself.)
We wonder about how to characterize our irreducible factors  among
the modular
representations 
of 
the symplectic group 
$\Sp(2g,\BF_p)$ in characteristic $p$. 
As a step in this direction,  in the case $p=5$, we have identified our dimensions $\fe_g^{(2c)}(5)$ and $\fo_g^{(2c)}(5)$  
with the dimensions of some known modular representations
 (see \S\ref{sec7}).

 (2) In the genus one case, where $F(\Si)$ itself is irreducible, we have $\fe_1^{(2c)}(p)= (p-1)/2-c$, and
 $\fo_1^{(2c)}(p)=0$. In this case, we   
computed the modular representation $F(\Sigma_1(2c))$ for any $p$
and $c$ already in \cite{GM2}, where we showed that $F(\Sigma_1(2c))$ (which was denoted by
${\mathcal{S}}^+_{p,0}({\mathcal T}_c)$ in \cite{GM2}) is isomorphic to the space of homogeneous
polynomials over $\BF_p$ in two variables of total degree
$(p-3)/2-c$. It is
well-known that this representation
of $\Sp(2,\BF_p)=\SL(2,\BF_p)$ is irreducible for every $c\leq
(p-3)/2$.  

(3) Theorem~\ref{mainth} also gives information on the representations
of the extended 
mapping class group on the quotients 
$$\BS(\Si)\slash  (1-\zeta_p)^{N}  \BS(\Si)$$ for any $N\geq 1$.  
The study of these 
higher 
quotient 
representations is interesting, because they 
factor through finite quotients of the extended mapping class group, 
but
approximate the  TQFT-representation better and better as $N$
increases. This is discussed in more detail in \cite{GM2}.

(4)  
The proper
 irreducible subrepresentation 
$F^{odd}(\Si)$ of $F(\Si)$ 
      (in the case $g\geq 2$ and  $(g,c) \ne (2,0)$) 
     is the radical of a certain $\Gamma_\Si$-invariant  
bilinear
form on  the $\BF_p$-vector space  $F(\Si)$. 
 This form is symmetric if 
       $c\equiv  ((p+1)/2) \, g\pmod 2$.
Otherwise this form is
skew-symmetric. It is induced
(after rescaling by 
$(1-\zeta_p)^{-c}$) 
 from 
 a
  natural
    $\BZ[\zeta_p]$-valued form on 
the lattice
    $\BS(\Si)$.  See \cite[\S14]{GM} for an explicit
    description of this form in  the
    case $c=0$ ({\em i.e.} when the surface has 
empty
boundary). 
This    description generalizes to the case $c\neq 0$.~\footnote
{If $c\neq 0$, one should define   
the   notion of even and odd colorings as in Definition~\ref{eoc}. The
      definition of even and odd colorings in \cite[p.~838]{GM} applies
      only to the case $c=0$.}
      }
\end{rems}

\section{Proof of Corollary~\ref{maincor} and irreducibility  
in characteristic $\neq p$}\lbl{sec2}

This
is based
on the
following well-known lemma, whose proof we omit.

\begin{lem}\lbl{onto} Suppose a group $G$  
is
represented on a vector space $V$ over a field $k$. If the associated algebra morphism $k[G] \rightarrow \End_k(V)$ is surjective, then the representation  
is irreducible.
\end{lem}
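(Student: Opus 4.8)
The plan is to prove this directly, in its contrapositive form: assuming the algebra map $\rho\colon k[G]\to\End_k(V)$ is surjective, I would show that $V$ admits no $G$-invariant subspace other than $0$ and $V$ itself. The starting point is the observation that a subspace $W\subseteq V$ is $G$-invariant if and only if it is invariant under the image $\rho(k[G])\subseteq\End_k(V)$, since $\rho(k[G])$ is just the $k$-linear span of the operators coming from the elements of $G$. Hence, by the surjectivity hypothesis, any $G$-invariant subspace $W$ is in fact invariant under \emph{all} of $\End_k(V)$.

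The remaining step is the elementary fact that the only subspaces of $V$ stable under the full endomorphism algebra $\End_k(V)$ are $0$ and $V$. Indeed, suppose $W\neq 0$ and choose $w\in W$ with $w\neq 0$. Given an arbitrary vector $v\in V$, extend $\{w\}$ to a basis of $V$ and let $\phi\in\End_k(V)$ be the linear map sending $w$ to $v$ and every other basis element to $0$. Then $v=\phi(w)\in W$ because $W$ is $\End_k(V)$-stable; as $v$ was arbitrary, $W=V$. Combining the two steps, a $G$-invariant subspace $W\neq 0$ must equal $V$, which is precisely the assertion that the representation is irreducible.

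I do not expect any genuine obstacle here: the argument is entirely formal, uses no finiteness assumption on $\dim_k V$ or on $G$, and the only point requiring a word of care is the tacit convention that $V\neq 0$ so that the word ``irreducible'' is meaningful. It is worth remarking that the substantive statement in this circle of ideas is the \emph{converse} --- a form of the Burnside/Jacobson density theorem, to the effect that over an algebraically closed field an irreducible finite-dimensional representation already has surjective $\rho$ --- but that direction is neither needed nor claimed in the lemma above.
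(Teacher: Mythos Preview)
Your argument is correct and is exactly the standard proof of this well-known fact. The paper itself omits the proof entirely, simply stating that the lemma is well-known, so there is nothing further to compare.
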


\begin{proof}[Proof of Corollary~\ref{maincor} assuming
  Theorem~\ref{mainth}]

If $g\leq 1$ or if $(g,c)=(2,0)$, all colorings are even and hence
$F^{odd}(\Si)=0$. Thus, 
by (\ref{decomp3}) (which follows immediately from
Theorem~\ref{mainth}),  
the image of $\BF_p[\Gamma_\Si]$ is 
$\End_{\BF_p}(F(\Si))$ and Lemma \ref{onto} implies that the representation
is irreducible in this case. Otherwise, both $F^{odd}(\Si)$ and
$F^{ev}(\Si)$ are non-zero. Applying again Lemma \ref{onto}, it follows from (\ref{decomp3}) that
$F^{odd}(\Si)$ is an
irreducible subrepresentation of $F(\Si)$, and the quotient
representation $F(\Si)\slash F^{odd}(\Si)$ with the induced action is again irreducible. 

We only need to show, in the case that $F^{odd}(\Si) \ne 0$, that $F^{odd}(\Si)$ is the only
proper non-trivial subrepresentation of $F(\Si)$. 
The argument is quite standard, but we give details for the reader's
convenience. Suppose, then, that  $V \subset F(\Si)$ is a proper non-trivial subrepresentation. 

We consider first the case that $V \cap F^{odd}(\Si) \ne 0$. Then $V
\supset F^{odd}(\Si)$ since 
$F^{odd}(\Si)$ is irreducible.
If 
$V$ is strictly bigger than $F^{odd}(\Si)$, 
 then $\bar V$, the image of 
$V$ in $F(\Si)/ F^{odd}(\Si)$, is non-zero.  
But $F(\Si)/ F^{odd}(\Si)$ is also irreducible.
So $\bar V = F(\Si)/ F^{odd}(\Si)$
and hence $V=F(\Si)$ which is a contradiction. Thus $V=F^{odd}(\Si)$
in this first case.

We consider now the 
second case,  where
$V \cap F^{odd}(\Si) = 0$. 
We will see that this case cannot happen. We proceed as follows. If $V
\cap F^{odd}(\Si) = 0$, then
$\bar
V$, the image of $V$ in $F(\Si)/ F^{odd}(\Si)$ is non-zero, 
hence equal to $F(\Si)/ F^{odd}(\Si)$, by irreducibility.  
It follows that $$\dim V =\dim \bar V =\dim(F(\Si)/ F^{odd}(\Si))=
\dim  F^{ev}(\Si)$$
and the projection $\pi= \Id_{F^{ev}(\Si)}\oplus\, 0_{F^{odd}(\Si)}$ from $F(\Si)$ onto $F^{ev}(\Si)$ along
$F^{odd}(\Si)$ sends $V$ isomorphically to $F^{ev}(\Si)$. 

Now by (\ref{decomp3}) the projection $\pi$ lies in the image of
$\BF_p[\Gamma_\Si]$. 
As $V$ is stable under the action of $\Gamma_\Si$, it follows that
$V$ must be equal to $F^{ev}(\Si) .$ But $F^{ev}(\Si)$ is not stable under the action,
since 
$\Image(\BF_p[\Gamma_\Si]) \supset \Hom_{\BF_p}(F^{ev}(\Si),
F^{odd}(\Si))$. This contradiction shows that the second case does not
occur.  
\end{proof}

Theorem~\ref{mainth} also implies the following
corollary for the representation over the complex numbers
which generalizes a result of Roberts \cite{R} (see Remark~\ref{Robrem}).

\begin{cor}\lbl{ci} If the surface $\Si$ has at most
  one boundary component and if $p$ is a prime, the representation  of
  the extended mapping class group 
on
  the $\SO(3)$-TQFT vector space at the $p$-th root of unity is
  irreducible  over the complex numbers. 
\end{cor}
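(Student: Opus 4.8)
The plan is to deduce Corollary~\ref{ci} from Theorem~\ref{mainth} by a base-change argument, reducing the complex irreducibility statement to the fact that the image of the integral group algebra is ``as large as possible'' modulo a single prime. First I would observe that since the lattice $\BS(\Si)$ is a free $\BO$-module and $\BQ(\zeta_p)\otimes_\BO \BS(\Si)$ is the $\SO(3)$-TQFT vector space at the $p$-th root of unity (extended from $\BQ(\zeta_p)$ to $\BC$ via any embedding $\BQ(\zeta_p)\hookrightarrow \BC$), it suffices by Lemma~\ref{onto} to show that the image of $\BQ(\zeta_p)[\widetilde\Gamma_\Si^{++}]$ in $\End_{\BQ(\zeta_p)}(\BQ(\zeta_p)\otimes_\BO\BS(\Si))$ is the full endomorphism algebra; irreducibility over $\BC$ then follows since $\BQ(\zeta_p)\otimes\BC$-irreducibility is implied by $\BQ(\zeta_p)$-irreducibility once the full matrix algebra is hit (the endomorphism algebra being all of $M_n$ leaves no room for a splitting field obstruction).

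Next I would let $A\subseteq \End_\BO(\BS(\Si))$ denote the image of $\BO[\widetilde\Gamma_\Si^{++}]$, which by Theorem~\ref{mainth} is the ``block-triangular'' order in~(\ref{MT}). The key point is that after inverting $(1-\zeta_p)$, that is, after tensoring with $\BQ(\zeta_p)$, the off-diagonal entry $(1-\zeta_p)\Hom_\BO(\BS^{odd}(\Si),\BS^{ev}(\Si))$ becomes all of $\Hom_{\BQ(\zeta_p)}(\cdots)$, since $(1-\zeta_p)$ is a unit there; hence $\BQ(\zeta_p)\otimes_\BO A$ is the full endomorphism algebra of $\BQ(\zeta_p)\otimes_\BO\BS(\Si)$ with respect to the decomposition~(\ref{decomp}), which is simply $\End_{\BQ(\zeta_p)}(\BQ(\zeta_p)\otimes_\BO\BS(\Si))$. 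Since extension of scalars commutes with taking the image of a group algebra, this is exactly $\Image(\BQ(\zeta_p)[\widetilde\Gamma_\Si^{++}])$. Applying Lemma~\ref{onto} gives irreducibility over $\BQ(\zeta_p)$, and since the image is the full matrix algebra, irreducibility persists over $\BC$.

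The only subtle point, and the one I would want to state carefully, is the passage from ``$\BQ(\zeta_p)$-irreducible with full endomorphism image'' to ``$\BC$-irreducible.'' This is where one uses that a representation whose group algebra surjects onto $\End_K(V)$ stays absolutely irreducible: if $e\in \End_K(V)$ were a nontrivial idempotent commuting with the $G$-action over $\BC$, it would in particular commute with all of $\End_K(V)\otimes_K\BC=\End_\BC(V\otimes\BC)$, forcing $e$ to be scalar. So there is no genuine obstacle here; the argument is essentially formal once Theorem~\ref{mainth} is in hand. I should also note explicitly that the hypothesis ``$\Si$ has at most one boundary component'' is exactly the hypothesis under which Theorem~\ref{mainth} was proved, and that the case where $\Si$ has empty boundary (i.e.\ $c=0$) recovers Roberts' theorem, with the case of one boundary component being the genuinely new content; I would record the comparison with~\cite{R} as promised in the reference to Remark~\ref{Robrem}. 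The main (very minor) bookkeeping obstacle is simply making sure the identification of $\BQ(\zeta_p)\otimes_\BO\BS(\Si)$ with the TQFT vector space in the normalization of~\cite{BHMV2} is the one used in the statement, but this is already built into the construction of Integral TQFT reviewed in \S\ref{sec3}.
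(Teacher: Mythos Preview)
Your proposal is correct and follows essentially the same approach as the paper: use Theorem~\ref{mainth}, observe that $1-\zeta_p$ becomes a unit after extending scalars, so the group algebra surjects onto the full endomorphism algebra, and apply Lemma~\ref{onto}. The paper does this in one step by tensoring directly with $\mathbb{C}$ (so Lemma~\ref{onto} is applied over $\mathbb{C}$ and no separate discussion of absolute irreducibility is needed); your detour through $\BQ(\zeta_p)$ and the subsequent argument about idempotents is correct but unnecessary.
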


Note that irreducibility over the complex numbers implies irreducibility
over the cyclotomic field  $\BQ(\zeta_p)$.

\begin{rem} \lbl{Robrem}
{ \em  
In the case where $\Si$ is closed, Corollary~\ref{ci} is due to Roberts \cite{R}. Roberts
actually considered the $\SU(2)$-TQFT-representation.   
But when the representation is considered over the
     complex numbers, his argument works
     also in the $\SO(3)$-case we
     are considering in this paper. Roberts' argument is, however,
     quite different from ours.
}\end{rem}

\begin{proof}[Proof of Corollary~\ref{ci}.] By hypothesis we have $\Si=\Si_g(2c)$
  for some $g$ and $c$, and the representation under consideration is simply $\BS
  (\Si)\otimes \mathbb{C}$. As 
$1-\zeta_p$  
is a unit in
  $\mathbb{C}$, Theorem~\ref{mainth} shows that the group
  algebra $\mathbb{C}[\widetilde
  \Gamma_\Si^{++}]$ of the extended mapping class group 
maps onto $\End_\mathbb{C}(\BS(\Si)\otimes
\mathbb{C})$. Thus Lemma~\ref{onto} gives the result.
\end{proof}

The same argument works in finite characteristic  $\neq p$, as follows:

\begin{cor}\lbl{irred} If $I$ is an ideal in $\BO=\BZ[\zeta_p]$ so
  that   
      $\BO\slash I$
is a finite field of characteristic
  $\ell\neq p$, then the modular representation $\BS(\Si)\slash I
  \BS(\Si)$ 
in characteristic $\ell$ of the extended mapping class group 
is irreducible. 
\end{cor}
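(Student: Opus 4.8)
The plan is to argue exactly as in the proof of Corollary~\ref{ci}, the only extra ingredient being a ring-theoretic remark that lets that argument go through in characteristic $\ell\neq p$. Write $k=\BO\slash I$, a finite field of characteristic $\ell\neq p$. Since $\BS(\Si)$ is a free $\BO$-module, $\BS(\Si)\slash I\BS(\Si)=\BS(\Si)\otimes_\BO k$, with $\widetilde\Gamma_\Si^{++}$ acting by base change; so it suffices, by Lemma~\ref{onto}, to show that the image of $k[\widetilde\Gamma_\Si^{++}]$ in $\End_k(\BS(\Si)\otimes_\BO k)$ is all of $\End_k(\BS(\Si)\otimes_\BO k)$.

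First one checks that the image of $1-\zeta_p$ in $k$ is a unit. This is the one place where the hypothesis $\ell\neq p$ is used, and it is exactly the point at which the analogy with the complex case of Corollary~\ref{ci} could break down, since for $\ell=p$ the element $1-\zeta_p$ reduces to a nilpotent. Concretely, $p$ lies in the ideal $(1-\zeta_p)\BO$; hence if $1-\zeta_p$ were in $I$ we would have $p\in I$, and since also $\ell\in I$ and $\gcd(p,\ell)=1$ this would force $1\in I$, contradicting that $\BO\slash I$ is a field. So $1-\zeta_p\notin I$, i.e.\ its image in $k$ is invertible.

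It then remains to reduce the conclusion of Theorem~\ref{mainth} modulo $I$. Because $\BS(\Si)$ is $\BO$-free, one has $\End_\BO(\BS(\Si))\otimes_\BO k=\End_k(\BS(\Si)\otimes_\BO k)$, and the image of $k[\widetilde\Gamma_\Si^{++}]$ in the latter is the reduction modulo $I$ of the matrix algebra (\ref{MT}). Reducing block by block, the two diagonal blocks give $\End_k(\BS^{ev}(\Si)\otimes k)$ and $\End_k(\BS^{odd}(\Si)\otimes k)$, the lower-left block gives all of $\Hom_k(\BS^{ev}(\Si)\otimes k,\BS^{odd}(\Si)\otimes k)$, and the upper-right block $(1-\zeta_p)\Hom_\BO(\BS^{odd}(\Si),\BS^{ev}(\Si))$ reduces to all of $\Hom_k(\BS^{odd}(\Si)\otimes k,\BS^{ev}(\Si)\otimes k)$ because $1-\zeta_p$ is now invertible. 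Hence the image of $k[\widetilde\Gamma_\Si^{++}]$ is the full endomorphism algebra $\End_k(\BS(\Si)\otimes_\BO k)$, and Lemma~\ref{onto} finishes the proof. There is no real obstacle here: all the substance is in Theorem~\ref{mainth}, and the only points needing (minor) care are the unit statement of the second paragraph and the block-wise reduction of the third.
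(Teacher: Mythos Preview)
Your proof is correct and follows essentially the same approach as the paper. The paper's own argument is a one-liner: it simply observes that the hypothesis forces $1-\zeta_p$ to be a unit in $\BO/I\BO$, and then refers back to the proof of Corollary~\ref{ci}. You have unpacked both of these points in detail (the unit argument via $p\in(1-\zeta_p)\BO$ and $\gcd(p,\ell)=1$, and the block-wise reduction of Theorem~\ref{mainth}), but the underlying idea is identical.
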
 
 
\begin{proof} The hypothesis implies that $1-\zeta_p$  
is a unit in 
      $\BO\slash I$ 
and so the result follows from
Theorem~\ref{mainth} as in
the complex case above.
\end{proof}

\begin{rem}{\em More generally, if $I$ is any ideal in $\BO$ 
not divisible by
$(1-\zeta_p)$, then $1-\zeta_p$  
is a unit in the ring 
      $R=\BO\slash I$
and so the image of the
extended mapping class group generates the entire endomorphism
ring \hbox{$\End_R(\BS(\Si)\slash I
  \BS(\Si))$.} This is why we focus our attention on the ideal  $(1-\zeta_p)$ in
  this paper.
}\end{rem}

\section{Skein-theoretic definition of the integral TQFT representation}
\lbl{sec3}  
The theory of Integral $\SO(3)$-TQFT is slightly different depending on the
congruence class of the prime   $p \pmod 4$. Following \cite{GM}, let  $\BO_p=\BZ[\zeta_p]$, if
$p\equiv -1 \pmod 4$, and $\BO_p=\BZ[\zeta_{4p}]$, if $p\equiv 1 \pmod 4$.
The Integral TQFT lattice $\BS_p(\Si)$ defined in \cite[\S2]{GM} has
coefficients in the ring $\BO_p$. In the case $p\equiv 1 \pmod 4$, we defined in \cite[\S13]{GM} also another 
lattice $\BSplus(\Si)$, with coefficients in $\BZ[\zeta_p]$. The
lattice $\BSplus(\Si)$ 
lattice is again free of finite rank. It is contained (as a set) in
$\BS_p(\Si)$, and one has 
\begin{equation}\lbl{tens}\qquad \qquad \qquad 
  \BSp(\Si)=\BSplus(\Si)\otimes_{\BZ[\zeta_p]}\BZ[\zeta_{4p}]   \qquad \qquad (p\equiv 1 \pmod 4)
\end{equation} where $\zeta_{4p}^4=\zeta_p$. 
The `bigger' lattice $\BS_p(\Si)$ is in some sense the more natural one from the TQFT point
of view. But for the purpose of studying mapping class group representations, we want the coefficient ring to be as small as
possible. Therefore we take the lattice $\BS(\Si)$ considered in
Theorem~\ref{mainth} to be  
\begin{equation}\lbl{tens2}
\BS(\Si)= \begin{cases} \BSp(\Si) & \text{if $p\equiv -1 \pmod 4$}\\
\BSplus(\Si)& \text{if $p\equiv 1 \pmod 4$}
\end{cases}
\end{equation}

Thus, in both cases, $\BS(\Si)$ is a free $\BO$-module of finite rank,
where 
$\BO=\BZ[\zeta_p]$. We will continue to use the notations $\BO$ and
$\BS(\Si)$ whenever it is possible to make statements which are true
in both cases. 
However, the proof of Theorem~\ref{mainth} will require some additional
arguments in the case $p\equiv 1 \pmod 4$. We hope there will be no
confusion from the fact that $\BO$ is different from $\BO_p$ in this case.

The construction of these lattices  in \cite{GM} is a refinement of the
skein-theoretic 
construction of TQFT from the Kauffman bracket
in \cite{BHMV2}.

\begin{notation}
 Throughout the rest of the paper, we use the notations 
$h=1-\zeta_p$ and  
   $d=(p-1)/2$. 
\end{notation}

We let Kauffman's skein variable be $A= - \zeta_p^{d+1}$; this is a primitive $2p$-th
 root of unity, and a square root of $\zeta_p$. 
 As customary in the 
skein-theoretic
 approach to TQFT, 
rather
 than considering surfaces with boundary, we think of closed surfaces
 equipped with colored banded points. (A banded point in a surface $\Si$ is a
 small oriented closed interval embedded in $\Si$.) Thus, 
$\Si=\Si_g(2c)$  
will from now on stand for a closed surface of genus $g$ equipped with
one colored 
banded point colored $2c$, where $0\leq c\leq d-1$.

The TQFT-module $V_p(\Si)$ of \cite{BHMV2} has
coefficients in $\BO_p[h^{-1}]= \BO_p[p^{-1}]$.\footnote{We have
  $\BO_p[h^{-1}]= \BO_p[p^{-1}]$ because $(h^{p-1})=(p)$ as ideals in
  $\BZ[\zeta_p]$.} 
  This is a version  of the
Reshetikhin-Turaev $\SO(3)$-TQFT module at the prime $p$ associated to
$\Si$. Elements of $V_p(\Si)$ are represented skein-theoretically as linear combinations of
colored banded graphs  in a handlebody $H$ with $\partial H=\Si$,
where the graphs should meet the boundary nicely
in the colored banded point.  For example, the lollipop tree   
$T_g^{(2c)}$ of Figure~\ref{lol} (with banding parallel to the plane)
and some coloring defines an element of $V_p(\Si)$ where
$\Si=\Si_g(2c)$.

More generally, any closed connected 3-manifold $M$
equipped with an 
identification  of its boundary $\partial M$ with $\Si$, and
containing  
a colored banded graph which meets $\Si$ in the banded point, defines
an element $Z(M,G)$ of $V_p(\Si)$. Here (as is customary) we suppress
the identification  of $\partial M$ with $\Si$ from the
notation, but it is important to realize that the element $Z(M,G)$ depends
on this identification; indeed, 
the extended mapping class group $\widetilde  \Gamma_\Si^{++}$ 
acts on the set of such $(M,G)$ 
by changing how $\partial M$ is identified with $\Si$, 
and this induces the action of $\widetilde
\Gamma_\Si^{++}$ on the TQFT-module $V_p(\Si)$.  
 We also downplay a technicality: in order to define $Z(M,G)$ we must fix a
lagrangian in $H_1(\Si;\BQ)$ 
  and equip $M$ with an integer weight to resolve the so-called {\em framing
  anomaly}.  Increasing the weight by $1$ multiplies  $Z(M,G)$ by an
invertible scalar called $\kappa$ in \cite{GM}. But 
lagrangians and weights will play almost no role in the present paper,
and by abuse of notation, we simply write $Z(M,G)$ and
$V_p(\Si)$.\footnote{Formally, one may think that the notation $M$ 
stands for a
  manifold equipped with an integer weight, and $\Si$ stands for a surface
  equipped with a lagrangian. This is the point of view adopted in
  \cite{GM3}.}
This method of resolving the framing anomaly by using lagrangians and
integer weights was 
pioneered   
by  Walker \cite{W} and developed by Turaev \cite{T}. See also \cite{GM3} for
    our take on this.\footnote{Another way to resolve the framing anomaly  is to use
    $p_1$-structures as in \cite{BHMV2}.}

The Integral TQFT lattice 
$\BS_p(\Si)$ was originally defined as 
 the $\BO_p$-submodule of $V_p(\Si)$ spanned by all the elements $Z(M,G)$
 coming from connected 3-manifolds $M$ equipped with colored graphs
 $G$ (and any integer weight) \cite{G,GM}. 
 In \cite{GMW}, 
it was shown that $\BS_p(\Si)$ could be also described as  
the $\BO_p$-submodule
spanned by certain skein elements in the handlebody  $H.$ In other words, we
can use the 
fixed 3-manifold $H$ if we permit our skein elements  
to have certain carefully controlled denominators. This is done as
follows.

Recall that banded
knots can be 
also 
colored (= cabled) by skein elements (= linear combinations of banded
links) 
living  
in a solid torus. 
The 
skein-theoretic 
definition of $\BS_p(\Si)$ is as the  $\BO_p$-submodule of
$V_p(\Si)$ spanned by 
so-called {\em mixed graphs}\footnote{Mixed
  graphs were called { \em $v$-graphs} in \cite{GM}.} in \cite{GMW}
where a mixed graph is a colored banded graph $G$ union a banded link $L$   with each
component of $L$ colored by the following skein element $v$ in the
solid torus: 
\begin{equation}\lbl{def-v} v=h^{-1} (z+2)
\end{equation}
(Here, $z$ stands for the banded knot given by the core of the solid
torus.)
It is important to observe that the $\BO_p$-module $\BS_p(\Si)$
obtained in this way is bigger than the $\BO_p$-submodule of $V_p(\Si)$ spanned by colored
banded graphs in $H$ because $v$ has denominator $h$, which is a prime in
$\BO$. 
But it is this  $\BO_p$-module  $\BS_p(\Si)$ which carries a natural action of the extended mapping class group  $\widetilde  \Gamma_\Si^{++}$, 
as is most easily seen from the above description of $\BS_p(\Si)$  in
terms of all connected 3-manifolds $M$ with boundary identified with
$\Sigma.$ 

According to 
(\ref{tens}) and (\ref{tens2}),
in the case $p\equiv 1 \pmod 4$,
$\BS(\Si)$ is 
an $\BO$-module contained in the $\BO_p$-module $\BS_p(\Si).$
We still have that $\BS(\Si)$ is  the $\BO$-submodule of
$V_p(\Si)$ spanned by  
all mixed graphs 
in $H$ \cite[\S13]{GM}.

We refer the reader to \cite{GM} for the   
definition of the
basis elements 
$\bb_\sigma$ of 
$\BS(\Si)$ discussed in the introduction.  
Details of this definition will not be needed in this paper, but we
remark that 
although  
the $\bb_\sigma$
are {\em indexed} by colorings of the banded lollipop tree   
$T_g^{(2c)}$, the basis element  $\bb_\sigma$ is not just $T_g^{(2c)}$
with coloring $\sigma$, but 
is 
 a mixed graph in the sense explained
above,
which furthermore 
has been rescaled by a certain non-positive power of $h$.
In fact, the colorings of $T_g^{(2c)}$ give the so-called {\em
  graph basis} of $V_p(\Si)$, and the basis $\{\bb_\sigma\}$ of
$\BS(\Si)$ is obtained from this graph basis by a triangular (but not
unimodular) base
change (see \cite{GM} for more details).

Let us now describe the action of the extended mapping class group on
$\BS(\Si)$. First we recall that, as usual in TQFT, any cobordism $M$
from $\Si$ 
to itself, equipped with some colored graph $G$ connecting the colored
point in the source surface $\Si$ to the colored point in the target
surface $\Si$, defines an endomorphism 
 of $V_p(\Si)$ which, by abuse of notation, we denote again by
$Z(M,G)$.     
If furthermore $M$ is connected, then $Z(M,G)$ preserves the lattice
$\BS_p(\Si)$ (see \cite{GM}).\footnote{But $Z(M,G)$ does not
  necessarily preserve the lattice $\BS(\Si)=\BSplus(\Si)$ in the case $p\equiv 1 \pmod
  4$. We will address this problem at the end of \S\ref{sec5}.}  
For example, consider $M=\Si\times\I$ 
with weight zero, 
equipped with the  banded arc $pt
  \times \I$ with color $2c$ (where $pt$ is the colored
banded point in $\Si$).
We denote this `vertical' 
banded arc by
 $\varc$, 
and we let $\varc(2c)$ denote this arc colored by $2c$. Then we have
that $Z(\Si\times\I, \varc(2c))$  is the identity map of $\BS(\Si)$.

Next recall that the mapping class group $\Gamma_\Si$ is generated by
Dehn twists about simple closed curves $\alpha$ which avoid the
colored banded point. (We remind the reader that $\Gamma_\Si$ here is the
mapping class group of $\Si$ which fixes the banded point, which is
the same as the mapping class group rel.~boundary of $\Si$ with an open disk around the banded point
removed; in particular, the Dehn twist about a curve
encircling the banded point is non-trivial in $\Gamma_\Si$.) For a
certain skein element $\omega_+$ in the solid torus to be specified
below, we denote
by $\alpha_0(\omega_+)$ the skein element in $\Si\times\I$ obtained 
by cabling  
$\alpha \times \frac 1 2 $ by $\omega_+$, with framing zero relative to
the surface $\Si\times \frac 1 2$ lying in $\Si\times\I$.
Here is, then, a 
skein-theoretic 
description of the action.

\begin{prop}\lbl{act} For a certain lift $W(\alpha)$ of the Dehn twist about
  $\alpha$ to the extended mapping class group $\widetilde
  \Gamma_\Si^{++}$, the action of $W(\alpha)$ on $\BS(\Si)$ is
  given by the endomorphism 
\begin{equation}\lbl{act-alpha}
Z(\Si\times\I, \varc(2c) \cup \alpha_0(\omega_+))~.
\end{equation}  
\end{prop}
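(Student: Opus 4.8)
The plan is to recognize the action of a Dehn twist as a surgery operator, which is the standard mechanism by which mapping class groups act in skein-theoretic TQFT \cite{BHMV2}, now carried out at the level of the integral lattice following \cite{GM, GM3}.

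First I would recall the general setup. For an orientation-preserving diffeomorphism $\phi$ of $\Si$ fixing the colored banded point, form its mapping cylinder $C_\phi$, a connected cobordism from $\Si$ to $\Si$; decorate it with the vertical colored arc $\varc(2c)$ joining the two copies of the colored point, and fix an integer weight. As recalled just before Proposition~\ref{act}, this yields an endomorphism $Z(C_\phi,\varc(2c))$ of $V_p(\Si)$, well-defined once the weight is chosen, and changing the weight by $1$ multiplies it by $\kappa$. The central extension $\widetilde\Gamma_\Si^{++}$ of \cite{GM3} is built precisely so that a coherent system of weights for the generating Dehn twists turns this into a genuine, anomaly-free representation; we take $W(\alpha)$ to be the distinguished lift of $t_\alpha$ produced by this choice of weight.

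Next I would identify $Z(C_{t_\alpha},\varc(2c))$. The mapping cylinder of the Dehn twist $t_\alpha$ is diffeomorphic to $\Si\times\I$ with one extra surgery curve, namely a copy of $\alpha$ pushed into the interior along $\Si\times\{\tfrac12\}$, carrying framing $\pm1$ relative to that surface; since $\alpha$ misses the colored point, the arc $\varc(2c)$ is left alone. The surgery/Kirby-calculus principle underlying the construction --- that surgery along a framed knot is effected skein-theoretically by cabling it with the surgery element $\omega_\pm$ in the solid torus --- then gives $Z(C_{t_\alpha},\varc(2c)) = \lambda\, Z(\Si\times\I, \varc(2c)\cup\alpha_0(\omega_+))$ for a definite invertible scalar $\lambda$ (a product of a power of $\kappa$, from the weight, and a power of the normalization constant of $\omega_+$, which also absorbs the discrepancy between the $\pm1$ surgery framing and the framing $0$ used in $\alpha_0$). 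Absorbing $\lambda$ into the weight is exactly what singles out the lift $W(\alpha)$, producing the formula in (\ref{act-alpha}).

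Finally one must check that the operator in (\ref{act-alpha}) preserves the integral lattice $\BS(\Si)$, and not merely $V_p(\Si)$. When $p\equiv-1\pmod4$, $\BS(\Si)=\BSp(\Si)$, and $\BSp(\Si)$ is preserved by the operator of any connected cobordism decorated by a colored banded graph \cite{GM}; since $\omega_+$ lies in the integral skein module of the solid torus, expanding $\alpha_0(\omega_+)$ in the graph basis exhibits (\ref{act-alpha}) as a combination of such operators. When $p\equiv1\pmod4$, where $\BS(\Si)=\BSplus(\Si)$ is strictly smaller than $\BSp(\Si)$, a general such operator need not preserve $\BSplus(\Si)$, so one instead invokes the finer fact --- deferred to the end of \S\ref{sec5} --- that the operators arising from Dehn twists do preserve $\BSplus(\Si)$, which ultimately rests on the construction of the extended mapping class group action on $\BSplus(\Si)$ already given in \cite{GM}. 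The main obstacle is thus not the surgery identity, which is classical, but the bookkeeping: pinning down the weight so that $W(\alpha)$ is a well-defined element of $\widetilde\Gamma_\Si^{++}$ acting without anomaly, and settling lattice-preservation in the $p\equiv1\pmod4$ case.
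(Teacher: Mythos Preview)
The paper does not actually prove this proposition; it declares the statement to be well-known in the skein-theoretic approach to TQFT and refers to \cite[\S11]{GM3} for a precise formulation including the weights and Lagrangians needed to pin down the lift $W(\alpha)$. Your sketch is a correct and more detailed account than what the paper provides.

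One minor point of comparison: the paper's own justification (via Figure~\ref{figomega}) is a bit more direct than your surgery-based route. Rather than first identifying the mapping cylinder of $t_\alpha$ with $\Si\times\I$ surgered along a $\pm1$-framed push-off of $\alpha$ and then realizing that surgery skein-theoretically via $\omega$, one simply invokes the defining property of $\omega_+$: encircling a strand with $\omega_+$ at framing $0$ has the same effect in TQFT as giving that strand a full positive twist. This immediately shows that inserting $\alpha_0(\omega_+)$ into $\Si\times\I$ implements the Dehn twist along $\alpha$, with no surgery or framing discrepancy to absorb. Your approach is of course equivalent, just slightly more circuitous.

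Your discussion of lattice preservation in the two congruence classes of $p$ is accurate and matches what the paper does at the end of \S\ref{sec5}: for $p\equiv 1\pmod 4$ the issue is resolved by taking the weight of the cobordism to lie in the even cobordism category, and the paper notes that weight zero is of the correct parity for the cobordisms~(\ref{act-alpha}).
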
 

Statements like this are, of course, well-known in the  
skein-theoretic 
approach to
TQFT. A more precise formula, taking into
account the extra structure
  (weights and lagrangians) needed to define the central extension $\widetilde
  \Gamma_\Si^{++}$ and to specify the lifts $W(\alpha)$ of Dehn twists to $\widetilde
  \Gamma_\Si^{++}$, can be found in 
\cite[\S11]{GM3}.
We don't need
this more precise formula 
to describe the image of the group algebra $\BO[\widetilde
  \Gamma_\Si^{++}]$ in $\End_\BO(\BS(\Si))$,
because 
(1)
the lifts $W(\alpha)$ generate the extended mapping class group $\widetilde
  \Gamma_\Si^{++}$, and 
(2) 
in the central extension $$\BZ\rightarrow \widetilde
  \Gamma_\Si^{++} \rightarrow \Gamma_\Si~, $$ any two lifts of the same mapping
class to $\widetilde
  \Gamma_\Si^{++}$ differ by a power of the
central generator, and this central generator acts on $\BS(\Si)$ by 
 $\kappa^4$ (which is a unit in $\BO$)  
times the identity map.\footnote{In fact, $\kappa$ is a square root of
  $A^{-6-p(p+1)/2}$,  
and since $A^2=\zeta_p$ we have  
$\kappa^4=\zeta_p^{-6}.$  Notice that $\kappa^4\equiv 1 \pmod
  h$. This is why the representation of $\widetilde
  \Gamma_\Si^{++}$ on $F(\Si)=\BS(\Si)\slash h \BS(\Si)$ factors
  through the ordinary mapping class group  $\Gamma_\Si$.\lbl{foot9}} 
 Thus we have the following corollary, which
will be the starting point for our proof of Theorem~\ref{mainth}.

\begin{cor} \lbl{eom} The image of $\BO[\widetilde
  \Gamma_\Si^{++}]$ in $\End_\BO(\BS(\Si))$ is the $\BO$-subalgebra
  generated by the endomorphisms $Z(\Si\times\I, \varc(2c) \cup
  \alpha_0(\omega_+))$  associated to simple
  closed curves $\alpha$ on $\Si$ avoiding the colored banded point.
\end{cor}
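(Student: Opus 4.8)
The plan is to deduce Corollary~\ref{eom} immediately from Proposition~\ref{act}, once the generators of the central extension are accounted for. First I would recall from \cite{GM3} that $\widetilde\Gamma_\Si^{++}$ is generated by the lifts $W(\alpha)$ of the Dehn twists $t_\alpha$ about simple closed curves $\alpha$ on $\Si$ avoiding the colored banded point, together with the central generator of the extension $\BZ\rightarrow\widetilde\Gamma_\Si^{++}\rightarrow\Gamma_\Si$. Since this central generator acts on $\BS(\Si)$ by the scalar $\kappa^4$, which is a unit in $\BO$ (see the footnote to Proposition~\ref{act}), the image of $\BO[\widetilde\Gamma_\Si^{++}]$ in $\End_\BO(\BS(\Si))$ coincides with the image of the $\BO$-subalgebra of $\BO[\widetilde\Gamma_\Si^{++}]$ generated by the elements $W(\alpha)^{\pm 1}$.

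Next I would invoke Proposition~\ref{act}, which identifies the action of $W(\alpha)$ on $\BS(\Si)$ with the endomorphism $Z(\Si\times\I,\varc(2c)\cup\alpha_0(\omega_+))$. Combined with the previous step, this shows that the image of $\BO[\widetilde\Gamma_\Si^{++}]$ is exactly the $\BO$-subalgebra of $\End_\BO(\BS(\Si))$ generated by the endomorphisms $Z(\Si\times\I,\varc(2c)\cup\alpha_0(\omega_+))$ and their inverses.

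Finally I would remove the inverses from the list of generators. Because $\BS(\Si)$ is a free $\BO$-module of finite rank and each $Z(\Si\times\I,\varc(2c)\cup\alpha_0(\omega_+))$ is an automorphism of it, the Cayley--Hamilton theorem provides a monic polynomial relation for this endomorphism with coefficients in $\BO$ whose constant term is $\pm\det$, a unit in $\BO$; this expresses the inverse as an $\BO$-polynomial in the endomorphism itself. Hence the $\BO$-subalgebra generated by the $Z(\Si\times\I,\varc(2c)\cup\alpha_0(\omega_+))$ alone already contains all their inverses, and therefore equals the image of $\BO[\widetilde\Gamma_\Si^{++}]$, as claimed. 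I do not expect a genuine obstacle here: the entire content of the corollary is already packaged in Proposition~\ref{act}, and the only point requiring a moment's care is the bookkeeping for the central extension, which is disposed of by the remark that its generator acts by a unit scalar.
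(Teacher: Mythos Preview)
Your proof is correct and follows essentially the same approach as the paper, which simply observes in the paragraph preceding the Corollary that any two lifts of a given mapping class differ by a power of the central generator acting by the unit scalar $\kappa^4$, and deduces the result from this together with Proposition~\ref{act}. Your argument is more explicit in one respect: you spell out via Cayley--Hamilton why the inverses of the $Z(\Si\times\I,\varc(2c)\cup\alpha_0(\omega_+))$ need not be added as generators, a point the paper leaves implicit.
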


It remains to describe $\omega_+$. Its key property (which basically
implies 
Proposition~\ref{act}) is described in Figure~\ref{figomega}.

\begin{figure}[h]
\includegraphics[width=1.8in]{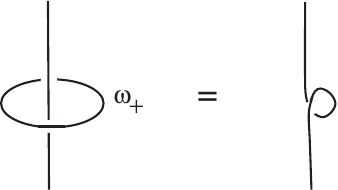}
\caption{Encircling a strand with $\omega_+$ has the same
effect in TQFT as giving that strand a positive twist.} \lbl{figomega}
\end{figure}

An explicit formula for $\omega_+$ is given in
\cite[\S4]{GM2}, based on computations of a similar skein
element in \cite{BHMV1}. 
In this paper, we only need to know the
following about $\omega_+$. Let $\mathcal T$ denote a closed surface
of genus one ({\em i.e.}, a torus) viewed as the boundary of a solid
torus.  Since $\omega_+$ lies in the solid torus, it defines an element of
$V_p({\mathcal T})$. 
By abuse of notation, we denote this element again by
$\omega_+$.

\begin{prop} \lbl{pw} One has $\omega_+\in \BS(\mathcal
  T)$. Moreover,  for the
  usual algebra  structure in $\BS(\mathcal T)$ coming from thinking
  of the solid torus as an annulus $\times\ \I$, the elements
$$1, \omega_+, \omega_+^2, \ldots, \omega_+^{d-1}$$ form a basis of
 $\BS(\mathcal T)$.
\end{prop}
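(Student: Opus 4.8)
The plan is to work in the torus TQFT module $V_p(\mathcal T)$ and use the standard description of the algebra structure on $\BS(\mathcal T)$ in terms of the variable $z$ (the core of the solid torus) together with the small-colored basis. First I would recall that $V_p(\mathcal T)$ has the graph basis $\{e_i\}$ indexed by colors $i\in\{0,1,\ldots,d-1\}$, that the algebra multiplication is diagonalized by this basis (each $e_i$ being an idempotent up to a nonzero scalar), and that $\BS(\mathcal T)$, by the definition recalled in \S\ref{sec3}, is the $\BO$-span of the mixed graphs $z^k v^{\,l}$ where $v=h^{-1}(z+2)$; equivalently $\BS(\mathcal T)$ is the $\BO$-span of $1,v,v^2,\ldots,v^{d-1}$ (this is one of the basic facts established in \cite{GM} in the case $c=0$, $g=1$, and I would cite it). So the key identity to establish is that $\omega_+$, as an element of $\BS(\mathcal T)$, is a polynomial in $v$ of degree exactly $d-1$ with unit leading coefficient; then $\{1,\omega_+,\ldots,\omega_+^{d-1}\}$ is obtained from $\{1,v,\ldots,v^{d-1}\}$ by a triangular base change with units on the diagonal, hence is again an $\BO$-basis.

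Next I would make the connection between $\omega_+$ and $v$ precise. By the defining property of $\omega_+$ depicted in Figure~\ref{figomega}, encircling a strand colored $i$ with $\omega_+$ multiplies by the twist coefficient $\mu_i := A^{i^2+2i}$ (or $(-1)^i A^{i(i+2)}$, in the $\SO(3)$ normalization — I would fix the exact constant from \cite[\S4]{GM2}). Thus in the eigenbasis $\{e_i\}$ of the multiplication algebra, $\omega_+$ acts as the scalar $\mu_i$ on $e_i$. On the other hand $z$ acts on $e_i$ by the eigenvalue $\lambda_i := -A^{2i+2}-A^{-2i-2}$ (the value of the Chebyshev-type function coming from encircling with the core), so $v=h^{-1}(z+2)$ acts by $h^{-1}(\lambda_i+2)$. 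The point is that the $d$ scalars $\mu_0,\ldots,\mu_{d-1}$ are pairwise distinct (this is where $p$ being prime and $A$ a primitive $2p$-th root of unity enters), so $\omega_+$ generates the same commutative algebra $V_p(\mathcal T)$ as $z$ does; writing $\omega_+$ as the interpolation polynomial in $z$ that takes the value $\mu_i$ at $\lambda_i$ gives $\omega_+$ as a degree-$(d-1)$ polynomial in $z$, hence in $v$.

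The main obstacle — and the part that needs genuine $\BO$-integral, not just $\BQ(\zeta_p)$-linear, control — is showing that the leading coefficient of $\omega_+$ as a polynomial in $v$ is a \emph{unit} in $\BO$, equivalently is nonzero mod $h$. This is an integrality/valuation statement: the interpolation is a priori only defined over $\BQ(\zeta_p)$, and one must check that when $\omega_+$ is re-expanded in the basis $1,v,\ldots,v^{d-1}$ the top coefficient does not acquire a factor of $h$ (nor, since we already know $\omega_+\in\BS(\mathcal T)$ — which I would either take from \cite{GM,GM2} or re-derive from the explicit formula for $\omega_+$ — a negative power of $h$). I expect to handle this by reducing modulo $h$: the reduction $\bar z$ of $z$ and $\bar v$ of $v$ satisfy $\bar v\equiv -\bar z$ up to the unit $h^{-1}(z+2)$ evaluated appropriately, and the distinctness of the $\mu_i$ survives mod $h$ (again because these are $2p$-th roots of unity and $p\nmid$ the relevant differences), so $\bar\omega_+$ still generates $F(\mathcal T)=\BS(\mathcal T)/h\BS(\mathcal T)$ as an algebra over $\BF_p$; a dimension count then forces the degree of $\bar\omega_+$ in $\bar v$ to be exactly $d-1$, i.e. the leading coefficient is a unit. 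In the case $p\equiv 1\pmod 4$ I would note that nothing changes here, since $\mathcal T$ has no boundary color and $\BS(\mathcal T)=\BSplus(\mathcal T)$ already has coefficients in $\BZ[\zeta_p]$ by \cite[\S13]{GM}.
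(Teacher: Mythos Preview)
Your proposal has a genuine gap at the integrality step. The ``triangular base change'' claim does not follow from what you establish: even if $\omega_+ = u\,v^{d-1} + (\text{lower order in }v)$ with $u$ a unit, the powers $\omega_+^k$ for $k\geq 2$ involve $v^{k(d-1)}$, which must be reduced using the minimal relation satisfied by $v$ in the $d$-dimensional algebra $\BS(\mathcal T)$. After that reduction the change-of-basis matrix from $\{v^i\}$ to $\{\omega_+^i\}$ has no reason to be triangular, and its determinant is not visibly a unit.

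Your fallback argument via reduction mod $h$ also fails as stated. The twist eigenvalues are $\mu_j = (-A)^{j(j+2)} = \zeta_p^{(d+1)j(j+2)}$, hence $p$-th roots of unity; since $\zeta_p \equiv 1 \pmod h$ we have $\mu_j \equiv 1 \pmod h$ for \emph{every} $j$. So the distinctness of the $\mu_j$ does \emph{not} survive mod $h$ --- indeed $\mu_j - \mu_k$ is always divisible by $h$ when $j\neq k$, since any two distinct $p$-th roots of unity differ by an associate of $h$. Thus one cannot conclude that $\bar\omega_+$ generates $F(\mathcal T)$ from eigenvalue separation. (A smaller confusion: the graph basis $\{e_i\}$ consists of Chebyshev polynomials in $z$, not idempotents; the idempotents form a different, non-integral basis of $V_p(\mathcal T)$.)

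The paper's proof avoids these difficulties by a global determinant argument. Using the Hopf pairing one has $((\omega_+^i, e_j)) = (-1)^j[j+1]\,\mu_j^i$, a Vandermonde matrix up to unit column factors, whose determinant is $\prod_{j<k}(\mu_j-\mu_k) \sim h^{d(d-1)/2}$. Comparing with the known determinant $\sim h^{d(d-1)}$ of the Hopf pairing on $\{e_i\}$ shows that the matrix expressing $\{\omega_+^i\}$ in $\{e_j\}$ has determinant $\sim h^{-d(d-1)/2}$, which matches exactly that of a known $\BO_p$-basis of $\BS_p(\mathcal T)$; since the $\omega_+^i$ already lie in $\BS_p(\mathcal T)$, this index comparison forces equality. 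Note that the argument \emph{uses} that each $\mu_j-\mu_k$ has $h$-valuation exactly one --- the very phenomenon that breaks your mod-$h$ approach is what makes the Vandermonde determinant come out correctly.
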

(Here, $1$ stands for the empty link, which is the unit element
of this algebra.)

 \begin{proof} 
We first consider the case $p\equiv -1 \pmod 4$, where $\BS(\mathcal
T)= \BS_p(\mathcal T)$. 
The following proof is similar to the proof of
Theorem 6.1 of \cite{GMW}, where the reader  will find more details on
some of the arguments that follow.

By its very definition \cite{G,GM}, the lattice $\BS_p(\mathcal T)$
contains the TQFT-vectors associated to all connected
manifolds with boundary $\mathcal T$. 
Since $\omega_+$, up to multiplication by a unit, 
 is also represented in $V_p(\mathcal T)$ by the result of 
$-1$-framed
 surgery to the solid torus with boundary 
$\mathcal T$ along 
the core of the solid torus,
we have that $\omega_+ \in \BS_p(\mathcal T)$.  
Let $P$ be the 
$\BO_p$-module spanned 
by the purported basis.  By similar reasoning, $P \subset \BS_p (\mathcal T)$.

Let
$\{ e_0=1,e_1,  
\ldots, 
e_{d-1} \}$ 
be the standard graph basis 
of
$V_p(\mathcal{T}).$ The determinant of  a matrix which  
expresses
a basis
for $\BS_p(\mathcal T)$ in terms of  
$\{ e_0,e_1, \ldots, e_{d-1} \}$ is
given by $h^{-d(d-1)/2}$, up to units  (see \cite[p.~272]{GMW}).

Consider the Hopf pairing $(( \ , \ )): \BS_p(\mathcal T) \times
\BS_p(\mathcal T) \rightarrow \BO_p$, where $(( x , y ))$ is given by
cabling the zero-framed Hopf link with $x$ and $y$ and then evaluating
the resulting skein 
element  
in $S^3$ 
(normalized so that the empty link evaluates to $1$.)  
 The determinant of the $d \times d$ matrix $((e_i , e_j ))$
is up to units given by $h^{d(d-1)}$.
This is because the
$e_i$ are an orthonormal basis for $V_p(\mathcal{T})$ with respect to
the usual Hermitian TQFT form on $V_p(\mathcal{T})$ \cite{BHMV2}, and
the Hopf pairing that we are considering differs from this by  
complex
 conjugation (which leaves the $e_i$ fixed), the action of   
a homeomorphism (which is an isometry), and a rescaling by  $h^{d-1}$
(up to units). 

If we pair our purported basis with the $e_i$ under the Hopf pairing, we
get $$((\omega_+^i,e_j)) = (-1)^j [j+1]\mu_j^i~,$$  
where 
$\mu_j=   
(-A)^{j(j+2)} 
=  \zeta_p^{(d+1)j(j+2)}$
 are the twist eigenvalues,
and the quantum integers $[j+1]$ are defined by 
$$[n]
 =\frac {A^{2n}-A^{-2n}}{A^{2}-A^{-2}}=\frac {\zeta_p^n-\zeta_p^{-n}}{ \zeta_p-\zeta_p^{-1}} ~. $$
Ignoring the unit\footnote{The fact that the quantum integers $[j+1]$
  appearing here are units in $\BZ[\zeta_p]$ is shown in \cite[Lemma~3.1(ii)]{MR2}.} 
factors  $(-1)^j [j+1]$  which appear as multiples of the columns, 
this is a Vandermonde matrix.
(This matrix appeared on \cite[p.~272]{GMW}.)  
Up to units,  its determinant is $h^{d(d-1)/2}$. It follows that the
determinant of the matrix expressing $\omega_+^i$ in terms of $e_j$
is given by $h^{-d(d-1)/2}$, up to units. As a known basis 
for $\BS_p(\mathcal T)$ has this same property, and  $P \subset \BS_p
(\mathcal T)$, it follows that  $P = \BS_p (\mathcal T)$.  
This completes the proof in the case $p\equiv -1 \pmod 4$. 

Now assume $p\equiv 1 \pmod 4$. Then the exact same proof as above shows that $\{1,
\omega_+, \omega_+^2, \ldots, \omega_+^{d-1}\}$ is a basis of
 $\BS_p(\mathcal T)$, which now has coefficients in
 $\BZ[\zeta_{4p}]$. But the explicit formula for $\omega_+$ given in
\cite[\S4]{GM2} shows that $\omega_+$ and its powers lie in the $\BZ[\zeta_{p}]$-lattice
 $\BS(\mathcal T)=\BSplus(\mathcal T) $.  Using (\ref{tens}), it follows that  $\{1,
\omega_+, \omega_+^2, \ldots, \omega_+^{d-1}\}$ is a basis of
 $\BS(\mathcal T)$.  This completes the proof in the case $p\equiv 1 \pmod 4$.  
\end{proof}

\section{$v'$-colored links in the product of a surface and  an interval}

 As the material
  presented
   here has independent interest, for this section only,
 we work in a  more general context where
the coefficient ring can be any integral domain $R$
containing an invertible element $A$
for which $1+A \ne 0$.

Let $S$ be a compact oriented surface, possibly  with boundary (but
without colored points). Let ${\mathcal K}(S\times \I)$ be the Kauffman bracket
skein module of $ S\times \I$ with coefficients in $R$. 
Observe that ${\mathcal K}(S\times \I)$ has a natural
product structure 
 (given by stacking one banded link on top of another)
which makes ${\mathcal K}(S\times \I)$ into an algebra.  
A banded link $L$ is called {\em layered} if it can be written as a
product of banded knots, {\em i.e.,} if each component of $L$  
projects to  a different subset of $\I$.
We say that a banded
knot 
 in $S\times \I$ is {\em flat} if it is (up to isotopy) entirely contained in a
surface $S\times \{t\}$ for some $t$  in the interior of
$\I$,  and the banding is also flat (i.e. parallel to the layers).   
Equivalently, a banded 
knot
 is flat if it has a diagram on
$S$
without crossings.

Let $v'$ be the following slight modification\footnote{Note that $v'$ was called $v$ in
\cite{GMW}, but in the present paper we follow \cite{GM} and reserve the notation $v$
for the element defined in Equation (\ref{def-v}). We will see in
(\ref{vvp}) that $v$ and
$v'$ are essentially equivalent when $R=\BO$.}  
of the element $v$
defined in (\ref{def-v}): 
\begin{equation*} 
\lbl{def-v'} v'=\frac {z+2}{1+A}
\end{equation*}
Note that $v'$ lies in  ${\mathcal K}(\mathrm {solid \
  torus})\otimes R[(1+A)^{-1}]$.

\begin{lem} The $R$-submodule of ${\mathcal K}(S\times \I)\otimes
  R[(1+A)^{-1}]$ spanned by $v'$-colored banded links is generated, as
  an algebra, by $v'$-colored  banded knots.  
\end{lem}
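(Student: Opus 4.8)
The plan is to reduce the statement to a single crossing-change identity for the $v'$-cabling operation and then run an induction on the number of components. For a banded link $L$ in $S\times\I$, write $\Phi(L)\in{\mathcal K}(S\times\I)\otimes R[(1+A)^{-1}]$ for the element obtained by cabling every component of $L$ with $v'$; thus the $R$-module in the statement is $M:=R\text{-span}\{\Phi(L):L\text{ a banded link}\}$, while $N$ is the unital $R$-subalgebra generated by the $\Phi(K)$ with $K$ a banded knot. The inclusion $N\subseteq M$ is immediate: cabling is local, so $\Phi(K_1)\cdots\Phi(K_m)=\Phi(K_1\cdots K_m)$, where $K_1\cdots K_m$ is the layered link obtained by stacking; hence every product of $v'$-colored knots, as well as $1=\Phi(\emptyset)$, lies in $M$. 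The real task is therefore $M\subseteq N$, i.e.\ to show $\Phi(L)\in N$ for \emph{every} banded link $L$.

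First I would establish the crossing-change identity. Expanding $v'=\tfrac{1}{1+A}(z+2)$ on each component of an $n$-component link $L$ gives $\Phi(L)=\tfrac{1}{(1+A)^{n}}\sum_{T}2^{\,n-|T|}\,L|_{T}$, the sum over subsets $T$ of the set of components, with $L|_{T}$ the corresponding sublink. The Kauffman relation at a crossing $c$ between two distinct components $K_1,K_i$ affects only the sublinks $L|_T$ with $\{1,i\}\subseteq T$, merging $K_1$ and $K_i$ in each of them; regrouping these merged sublinks into the analogous expansions of the two smoothings $L_{0},L_{\infty}$ of $L$ at $c$, and observing that the sublinks avoiding $K_1\cup K_i$ contribute identically to $L_0$ and to $L_\infty$, one gets
\[
\Phi(L_{+})=\Phi(L_{-})+\frac{A-A^{-1}}{1+A}\bigl(\Phi(L_{0})-\Phi(L_{\infty})\bigr)
=\Phi(L_{-})+(1-A^{-1})\bigl(\Phi(L_{0})-\Phi(L_{\infty})\bigr),
\]
where $L_{\pm}$ differ at $c$ by a crossing change. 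Two features matter: the coefficient $(A-A^{-1})/(1+A)=1-A^{-1}$ lies in $R$ (since $A$ is a unit) — this is exactly what the normalizing factor $1+A$ in $v'$ is for — and $L_{0},L_{\infty}$ each have one component fewer than $L_{\pm}$.

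Then I would induct on the number of components $n$ of $L$. For $n=0$ we have $\Phi(L)=1\in N$, and for $n=1$, $\Phi(L)$ is a $v'$-colored knot, hence in $N$. For $n\ge 2$, assume the claim for links with fewer components. Fix a diagram of $L$, single out a component $K_1$, and set $L'=K_2\cup\cdots\cup K_n$. Using crossing changes at the crossings between $K_1$ and $L'$, make $K_1$ the over-strand at all of them and then isotope $K_1$ above $L'$ (standard), passing from $L$ to the stacked link $K_1\cdot L'$. By the identity above, each crossing change alters $\Phi$ by $\pm(1-A^{-1})\bigl(\Phi(X)-\Phi(Y)\bigr)$ for some $(n-1)$-component links $X,Y$, and $\Phi(X),\Phi(Y)\in N$ by induction; hence $\Phi(L)-\Phi(K_1\cdot L')\in N$. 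Finally $\Phi(K_1\cdot L')=\Phi(K_1)\cdot\Phi(L')$ with $\Phi(K_1)\in N$ and $\Phi(L')\in N$ by induction, and since $N$ is an algebra this is in $N$; therefore $\Phi(L)\in N$. This closes the induction and proves $M=N$.

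The hard part will be the bookkeeping behind the displayed identity: $v'$-cabling does not commute with resolving a crossing — one side of the Kauffman relation carries the $v'$-label on a merged component, the other on two separate components — so one has to pass through the explicit sublink expansion of $\Phi$ and verify that, once the lower sublinks cancel between the two smoothings, the leftover coefficient is the honest element $1-A^{-1}\in R$ rather than merely an element of $R[(1+A)^{-1}]$. Given this identity, the induction is routine.
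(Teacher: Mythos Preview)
Your proof is correct and follows essentially the same approach as the paper: both prove the crossing-change identity $\Phi(L_{+})-\Phi(L_{-})=(1-A^{-1})(\Phi(L_{0})-\Phi(L_{\infty}))$ for a crossing between distinct $v'$-colored components, and then run the obvious induction on the number of components. The only difference is cosmetic: the paper derives the identity by a local diagrammatic computation on the two strands at the crossing (expanding $v'=(z+2)/(1+A)$ there, applying the Kauffman relation, and reconverting the merged $z$-strand back to $v'$), whereas you obtain it via the global sublink expansion of $\Phi$; the cancellations you describe are exactly the ones that make the paper's local computation work.
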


\begin{proof}\lbl{L1}
We need to convert a 
 $v'$-colored 
banded link 
$L$  
to an $R$-linear combination of banded links 
where the components are layered in some order one on top of the
other.  
Recall 
that one of the Kauffman bracket skein relations  \cite{Ka} is

\[ 
\begin{minipage}{0.2in}\includegraphics[width=0.2in]{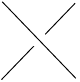}\end{minipage}
=  
 \ \ A \  \ 
\begin{minipage}{0.2in}\includegraphics[width=0.2in]{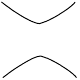} \end{minipage}
+
\ \  A^{-1} \  \ \begin{minipage}{0.2in} \includegraphics[width=0.2in]{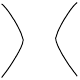}  \end{minipage}
   \]

The following equation shows how to 
use  
this
skein relation to  
change
 crossings of strands belonging to different 
connected components of $L$  
at the cost of
 introducing an $R$-linear combination of $v'$-colored banded links with fewer crossings and fewer components.  The dotted lines show the connection scheme in the complement of a disc in $S$ where the crossing occurs. This allows a proof by induction
 on the number of 
connected components of $L$.
\begin{align*}  
  \begin{minipage}{0.6in}\includegraphics[width=0.6in]{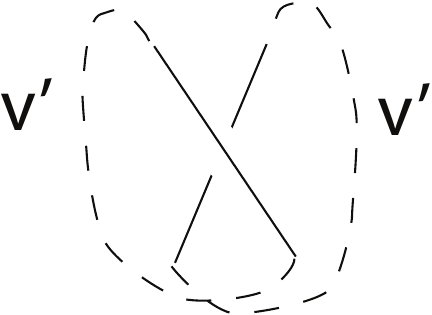}\end{minipage} -
 \begin{minipage}{0.6in}\includegraphics[width=0.6in]{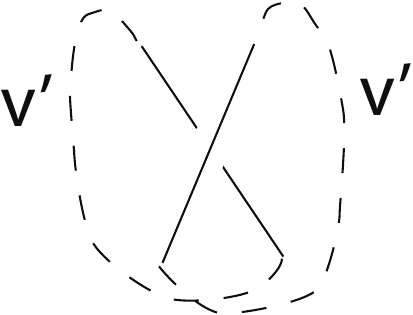}\end{minipage}
&= \frac {1}{(1+A)^2} \left(
  \begin{minipage}{0.6in}\includegraphics[width=0.6in]{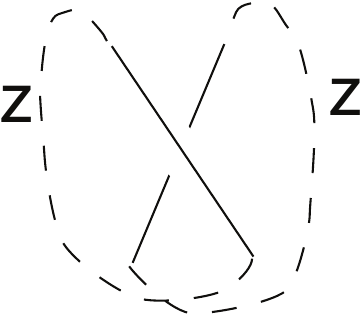}\end{minipage} -
  \begin{minipage}{0.6in}\includegraphics[width=0.6in]{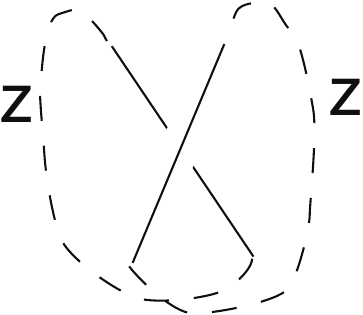}\end{minipage}
  \right)
  \\
  &
 = \frac {A - A^{-1}}{(1+A)^2} \left(
  \begin{minipage}{0.6in}\includegraphics[width=0.6in]{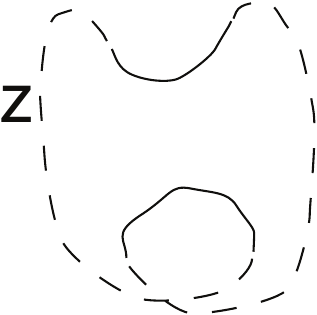}\end{minipage}-
 \begin{minipage}{0.6in}\includegraphics[width=0.6in]{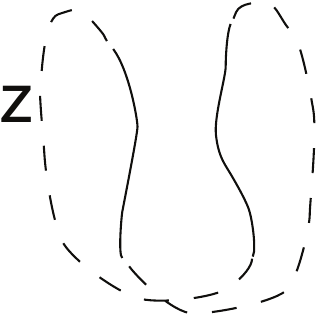}\end{minipage} \right)\\
 &= \frac {A - A^{-1}}{1+A} \left(
\begin{minipage}{0.6in}\includegraphics[width=0.6in]{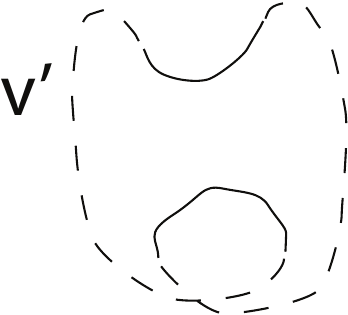}\end{minipage}-
 \begin{minipage}{0.6in}\includegraphics[width=0.6in]{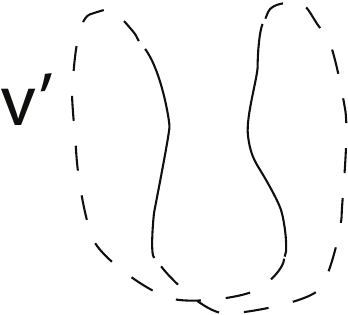}\end{minipage} \right)
 \\
 &
 = (1-A^{-1}) \left(
\begin{minipage}{0.6in}\includegraphics[width=0.6in]{av.pdf}\end{minipage}-
 \begin{minipage}{0.6in}\includegraphics[width=0.6in]{bv.pdf}\end{minipage} \right)
  \end{align*}

  \end{proof}

\begin{prop} \lbl{flatv'}The $R$-submodule of ${\mathcal K}(S\times \I)\otimes
  R[(1+A)^{-1}]$ spanned by $v'$-colored banded links is generated, as
  an algebra, by $v'$-colored flat banded knots.  
\end{prop}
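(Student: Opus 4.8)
\emph{Proof proposal.} The plan is to reduce an arbitrary $v'$-colored banded link to $v'$-colored flat banded knots by a single induction on the number of crossings, using the Kauffman bracket skein relation to lower the crossing number and the identity $z=(1+A)v'-2$ (valid after inverting $1+A$) to pass freely between a bare component and a $v'$-colored one without leaving the $v'$-submodule. To set it up, let $\mathcal A\subset{\mathcal K}(S\times\I)\otimes R[(1+A)^{-1}]$ be the $R$-subalgebra generated by the $v'$-colored flat banded knots, and write $[L]_{v'}$ for the result of cabling every component of a banded link $L$ by $v'$. Since $v'=(z+2)/(1+A)$, each bare flat banded knot $C$ equals $(1+A)[C]_{v'}-2$ (the empty link being the unit), so every bare flat banded knot already lies in $\mathcal A$; and since cabling is multilinear in the colors of the components, $[L]_{v'}$ is an $R[(1+A)^{-1}]$-linear combination of the bare sublinks $L_T\subseteq L$ (indeed this already shows the $v'$-submodule is all of ${\mathcal K}(S\times\I)\otimes R[(1+A)^{-1}]$). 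Hence it suffices to show that every bare banded link $M$ lies in $\mathcal A$.

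I would prove this by induction on the number $n$ of crossings of a diagram of $M$ on $S$. The base case $n=0$ is where the word ``flat'' enters: a crossingless diagram is a disjoint union of embedded circles on $S$, which we realize with bandings parallel to $S$ and push to distinct levels of $S\times\I$; then $M$ is literally the product of these bare flat banded knots, each lying in $\mathcal A$ by the opening remark, so $M\in\mathcal A$. For the inductive step, apply the skein relation at one crossing of the diagram: this writes $M$ as $A$ times one smoothing plus $A^{-1}$ times the other, and each smoothing is a link with a diagram with only $n-1$ crossings (smoothing a crossing is a local move and creates no new crossings), so both smoothings lie in $\mathcal A$ by induction, and therefore so does $M$. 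Combined with the reduction above, this proves the proposition. This is in the same spirit as the proof of the preceding lemma, which is in effect the special case in which one only changes crossings between distinct components; one could instead start from that lemma and induct on the self-crossings of a single $v'$-colored knot, but the two-component links produced when a self-crossing is smoothed force one to enlarge the induction hypothesis to all links anyway, so it is cleanest to induct over all links from the outset.

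There is no genuinely hard step here; the points needing a little care are routine: that smoothing a crossing does not raise the crossing number (clear, being a local modification), that bandings behave correctly under smoothing and that a crossingless diagram really represents a disjoint union of \emph{flat} banded knots (take the bandings parallel to $S$ and separate the components into distinct levels), and that the coefficients stay in $R$, or, where $1+A$ must be inverted, in $R[(1+A)^{-1}]$. The one conceptual ingredient, exactly as in the preceding lemma, is that $z+2$ may be traded against $(1+A)v'$ and a scalar, which is what keeps every intermediate link inside the $v'$-submodule throughout the induction.
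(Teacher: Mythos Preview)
Your reduction to bare links has a genuine gap. The statement to prove is about an $R$-submodule and an $R$-subalgebra $\mathcal A$, but your ``Hence it suffices'' step implicitly passes through $R[(1+A)^{-1}]$-linear combinations. Concretely: you write $[L]_{v'}=(1+A)^{-n}\sum_T 2^{\,n-|T|}L_T$, note that each bare $L_T$ lies in $\mathcal A$, and then want to conclude $[L]_{v'}\in\mathcal A$. That inference fails, because $\mathcal A$ is only an $R$-module and $(1+A)^{-n}\notin R$ in general. Equivalently, your argument only establishes $[L]_{v'}\in\mathcal A\otimes_R R[(1+A)^{-1}]$, which is strictly weaker than what is required. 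Your parenthetical claim that the $v'$-submodule equals all of $\mathcal K(S\times\I)\otimes R[(1+A)^{-1}]$ is also false: already for $S$ a disk, every $v'$-colored link evaluates to an element of $R$ (an unknot contributes $(1+A)^{-1}(-A^2-A^{-2}+2)=-A^{-2}(A-1)^2(A+1)\in R$), so the $v'$-submodule sits inside $R\cdot 1$ and does not contain $(1+A)^{-1}$. For $S$ an annulus, the single $v'$-colored core $v'=(z+2)/(1+A)$ is not an $R$-combination of bare links, so your reduction cannot recover it from the bare side.

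What your induction actually proves is the (true but weaker) statement that $\mathcal K(S\times\I)\otimes R[(1+A)^{-1}]$ is generated as an $R[(1+A)^{-1}]$-algebra by flat banded knots. The proposition over $R$ is a genuine refinement: one must show that the $(1+A)^{-1}$ factors introduced by $v'$ can be absorbed back into $v'$-colorings with coefficients remaining in $R$. This is exactly what the paper's computation does: it takes a $v'$-colored knot with a self-crossing, expands $v'=(z+2)/(1+A)$, applies the skein relation, and then carefully regroups the pieces so that every surviving term is either a scalar in $R$ or an $R$-multiple of a $v'$-colored link with fewer crossings. The algebra is arranged precisely so that no stray $(1+A)^{-1}$ remains. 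That bookkeeping is the essential content of the proof, and your approach bypasses it.
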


\begin{proof}
 Using the previous 
lemma,
it is enough to see that an element
 given by a single  $v'$-colored 
banded
knot can be written as an $R$-linear combination of products of flat banded knots.  
This can be proved 
by induction on the number of crossings of the banded knot, using the following 
 equation, and a similar one obtained 
from the 
same
starting diagram but with the opposite crossing data. The two asterisks are meant to indicate two points on a disk in  $S$ to help locate the placement of the links with respect to these  reference points.
Thus  the empty link could be denoted by two asterisks, but to save space, we write simply a scalar for  
that scalar times the empty link. The dotted lines play the same role
as in the previous equation. 

The following equation, then,
shows that a $v'$-colored banded knot with $n$ crossings can be
rewritten as an $R$-linear combination of three $v'$-colored banded knots
with fewer than $n$ crossings, one $v'$-colored two-component
banded link
also with fewer than $n$ crossings,
and the 
empty link.   
 By the same reasoning as in the proof of the previous lemma, 
this $v'$-colored two-component 
banded link may be written
as a linear combination of $v'$-colored layered links with fewer than  
$n$ crossings. 
So the needed result can be proved  
by induction on the number of crossings.
  
   \begin{align*}
   \begin{minipage}{0.4in}\includegraphics[width=0.4in]{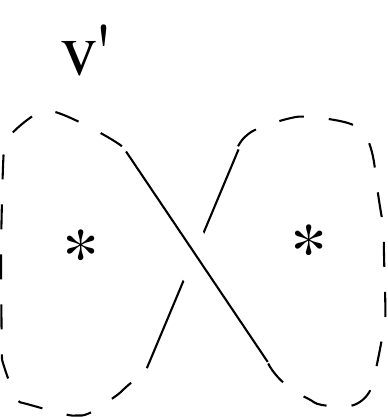}\end{minipage}
&= \frac {1}{1+A} \left(
  \begin{minipage}{0.4in}\includegraphics[width=0.4in]{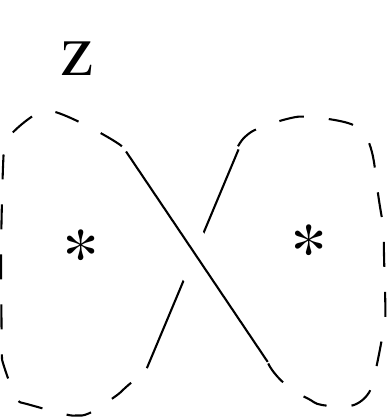}\end{minipage}
\ + \ 2\right)\\
&=
   \frac {1}{1+A} \left( A\ 
  \begin{minipage}{0.4in}\includegraphics[width=0.4in]{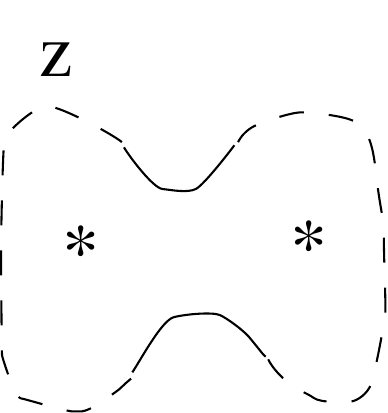}\end{minipage}
\ + \ 
  A^{-1} \ 
  \begin{minipage}{0.4in}\includegraphics[width=0.4in]{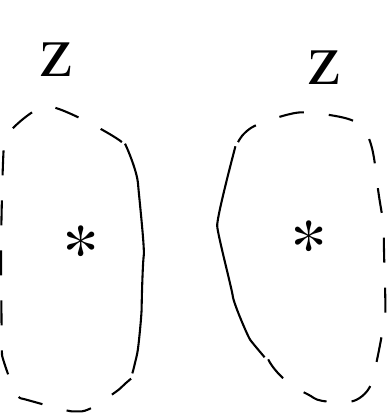}\end{minipage}
\ + \ 2\right)\\
&=
 \frac {1}{1+A} 
 \Bigg( A\ 
  \begin{minipage}{0.4in}\includegraphics[width=0.4in]{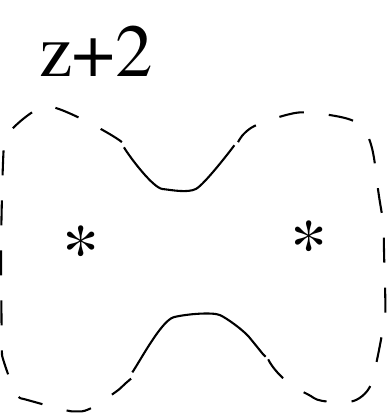}\end{minipage}
\ + \ 
A^{-1} \ 
\begin{minipage}{0.4in}\includegraphics[width=0.4in]{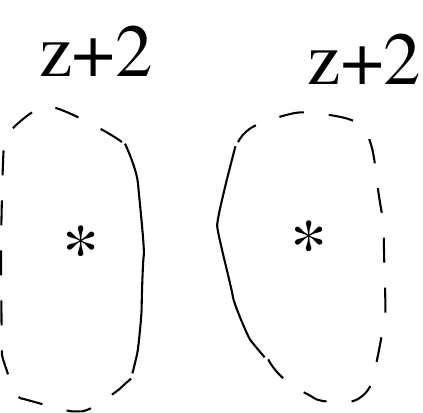}\end{minipage}
\  
 -\ 2 A^{-1} \ 
\begin{minipage}{0.4in}\includegraphics[width=0.4in]{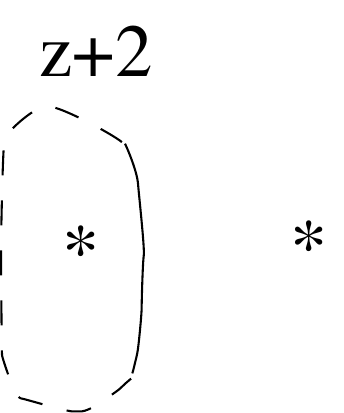}\end{minipage}
 \\
& \hspace{5cm} 
-\ 2 A^{-1}\  
\begin{minipage}{0.4in}\includegraphics[width=0.4in]{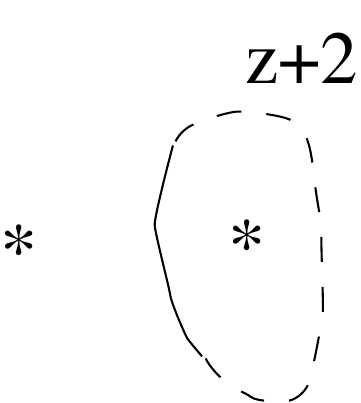}\end{minipage}
\  -2A +4 A^{-1}+2
 \Bigg)
\\
 &= 
  A \ \begin{minipage}{0.4in}\includegraphics[width=0.4in]{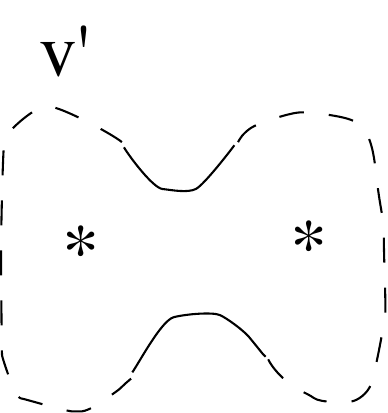}\end{minipage} 
 \ + \ (1+A^{-1})\ 
 \begin{minipage}{0.4in}\includegraphics[width=0.4in]{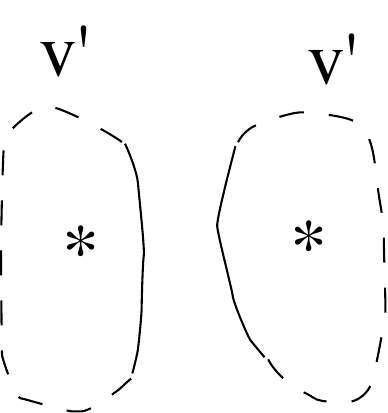}\end{minipage} 
 \ -\ 2 A^{-1} \ 
\begin{minipage}{0.4in}\includegraphics[width=0.4in]{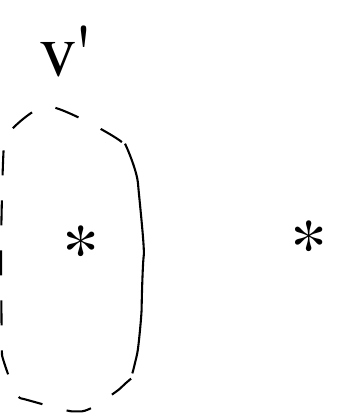}\end{minipage} \\
& \hspace{5cm}
 -2 A^{-1}\  
\begin{minipage}{0.4in}\includegraphics[width=0.4in]{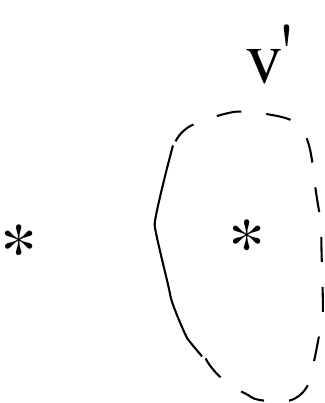}\end{minipage} 
 \ -2 +4 A^{-1}
  \end{align*}
  \end{proof}
  
We will say a banded link in $S\times \I$ is a 
 {\em flat layered} banded link, if 
it is a product of flat banded knots. In other words, 
 each component lies in a different 
$S \times \{t\}$ , the banding is also flat ({\em i.e.,} parallel to the layers), and each component is flat.
 We can rephrase the above result:

\begin{cor} \lbl{clflatv'} The $R$-submodule of ${\mathcal K}(S\times \I)\otimes
  R[(1+A)^{-1}]$ spanned by  $v'$-colored banded links is spanned  
(as an $R$-module) by flat layered $v'$-colored banded links. \end{cor}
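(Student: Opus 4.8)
The plan is to obtain Corollary~\ref{clflatv'} as a direct consequence of Proposition~\ref{flatv'}, with essentially no new work. First I would unwind the meaning of ``generated as an algebra'': to say that an $R$-submodule $M$ of ${\mathcal K}(S\times \I)\otimes R[(1+A)^{-1}]$ is generated as an algebra by a set $G$ is to say that $M$ is spanned, as an $R$-module, by the empty product (which here is the class of the empty link) together with all finite products $K_1 K_2 \cdots K_n$ with each $K_i\in G$. Taking $M$ to be the $R$-span of the $v'$-colored banded links and $G$ to be the set of $v'$-colored flat banded knots, Proposition~\ref{flatv'} then tells us that $M$ is $R$-spanned by the empty link together with the products $K_1\cdots K_n$ of $v'$-colored flat banded knots.

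The second step is to recognize each such product as a flat layered $v'$-colored banded link. Recall that the algebra product in ${\mathcal K}(S\times \I)$ is realized by stacking: one places $K_i$ inside the slab $S\times[t_{i-1},t_i]$ for some interior points $0<t_1<\cdots<t_n<1$. Since each $K_i$ is flat, it may be isotoped into a single slice $S\times\{s_i\}$ with $t_{i-1}<s_i<t_i$, keeping the banding flat. The resulting link has its components lying in the pairwise disjoint slices $S\times\{s_i\}$, each component flat, and each banding parallel to the layers; in other words it is exactly a flat layered $v'$-colored banded link in the sense of the definition preceding the corollary. The empty link is vacuously of this form. Hence $M$ is spanned, as an $R$-module, by flat layered $v'$-colored banded links, which is the assertion.

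I do not anticipate any genuine obstacle: all of the substance is already contained in the preceding Lemma, in Proposition~\ref{flatv'}, and in their skein-relation computations. The only point deserving a sentence of justification is that stacking flat knots at distinct heights introduces no crossings between different layers, so that the product $K_1\cdots K_n$ is an honest flat layered banded link rather than merely an $R$-linear combination of such after further reduction; but this is immediate since the slices $S\times\{s_i\}$ are disjoint in $S\times\I$.
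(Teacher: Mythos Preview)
Your argument is correct and is exactly what the paper intends: the text introduces the corollary with ``We can rephrase the above result,'' treating it as an immediate restatement of Proposition~\ref{flatv'} once one observes that a product of flat $v'$-colored banded knots under the stacking multiplication is precisely a flat layered $v'$-colored banded link. No additional content is needed.
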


\section{Proof of Theorem~\ref{mainth}}\ 
\lbl{sec5}

Let us consider the results of the previous section but taking $R$ to
be $\BO$ with $A= - \zeta_p^{d+1}$. 
 We write
 $x\sim y$ (where $x$ and $y$ lie in some $\BO$-module) if  
$x=uy$  
where $u$ is a unit of $\BO$. Since $1-A$ is a unit in $\BO$
\cite[Lemma~4.1(i)]{GMW}, we have 
$h= 1-\zeta_p =1-A^2 \sim1+A$. 
 Comparing  (\ref{def-v}) and (\ref{def-v'}), we see that
 \begin{equation}\lbl{vvp}
v \sim v'
\end{equation} in ${\mathcal K}(
\mathrm {solid \ torus}
)\otimes \BO[h^{-1}]$ . Thus we have the following specialization of Corollary \ref{flatv'}, where we have permissibly substituted $v$ for $v'$.

\begin{prop} \lbl{cflatv} The $\BO$-submodule of ${\mathcal K}(S\times \I)\otimes
  \BO[h^{-1}]$ spanned by  $v$-colored banded links is spanned by flat
  layered $v$-colored banded links.  
\end{prop}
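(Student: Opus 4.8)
The plan is to deduce Proposition~\ref{cflatv} directly from Corollary~\ref{clflatv'}, by specializing the arbitrary integral domain $R$ of the previous section to $R=\BO=\BZ[\zeta_p]$ with $A=-\zeta_p^{d+1}$. First one checks that this is an admissible choice for that section: $\BO$ is an integral domain, $A$ is a root of unity and hence a unit of $\BO$, and $1+A\neq 0$ because $1+A\sim h=1-\zeta_p$ and $h$ is a nonzero (in fact prime) element of $\BO$. Since $1+A$ and $h$ differ by a unit, $\BO[(1+A)^{-1}]=\BO[h^{-1}]$, so Corollary~\ref{clflatv'} applied to this $R$ states exactly that the $\BO$-submodule of ${\mathcal K}(S\times \I)\otimes\BO[h^{-1}]$ spanned by $v'$-colored banded links is spanned, as an $\BO$-module, by flat layered $v'$-colored banded links.

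It then remains only to replace $v'$ by $v$, and for this I would use the relation (\ref{vvp}), namely $v=uv'$ for some unit $u\in\BO$. Coloring a component of a banded link by $v$ rather than by $v'$ multiplies the resulting skein element by the unit $u$, so a banded link with $k$ components colored by $v$ equals $u^k$ times the same link colored by $v'$. Hence the $\BO$-submodule spanned by $v$-colored banded links coincides with that spanned by $v'$-colored banded links, and likewise the flat layered $v$-colored banded links span the same $\BO$-submodule as the flat layered $v'$-colored ones. Combining these observations with the specialized form of Corollary~\ref{clflatv'} gives the proposition.

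I do not anticipate any genuine difficulty: the statement is a formal specialization of the already-established Corollary~\ref{clflatv'}, together with the harmless rescaling $v\sim v'$ that lets one pass between the two colorings everywhere. All the actual work — the Kauffman-bracket crossing-change computations converting an arbitrary $v'$-colored link into a combination of flat layered pieces — was done earlier in the proofs of Proposition~\ref{flatv'} and Corollary~\ref{clflatv'}.
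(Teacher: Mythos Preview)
Your proposal is correct and is essentially the paper's own argument: the paper also simply specializes the results of the previous section to $R=\BO$ with $A=-\zeta_p^{d+1}$, observes that $h=1-A^2\sim 1+A$ (because $1-A$ is a unit in $\BO$, cf.~\cite[Lemma~4.1(i)]{GMW}) so that $v\sim v'$, and then invokes Corollary~\ref{clflatv'} with $v$ in place of $v'$. The only point you might make slightly more explicit is \emph{why} $1+A\sim h$: since $A^2=\zeta_p$, one has $h=1-A^2=(1-A)(1+A)$, and $1-A=1+\zeta_p^{d+1}$ is a unit in $\BZ[\zeta_p]$.
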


We now return to 
$\Si=\Si_g(2c)$.  
In order to prove
Theorem~\ref{mainth}, we must 
compute the image of $\BO[\widetilde \Gamma_\Si^{++}]$ in
$\End_\BO(\BS(\Si))$. We proceed in three steps. The first step is
to apply the previous Proposition~\ref{cflatv} with $S$
equal to $\Si$ with a disk around the colored banded point
removed. Thus $S\times\I$ is $\Si\times \I$ minus a tubular
neighborhood of the vertical arc $\varc= pt \times \I$.

\begin{lem} \lbl{ve} The image of $\BO[\widetilde
  \Gamma_\Si^{++}]$ in $\End_\BO(\BS(\Si))$ is equal to the
  $\BO$-submodule 
of $\End_\BO(\BS(\Si))$ spanned by elements of the form
$Z(\Si\times\I, \varc(2c) \cup s),$ where $s$ is some $v$-colored
banded link
in 
$(\Si\times\I) \setminus  \varc.$
\end{lem}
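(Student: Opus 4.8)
The plan is to establish the asserted equality by proving two inclusions. Write $\mathcal{A}\subseteq\End_\BO(\BS(\Si))$ for the image of $\BO[\widetilde\Gamma_\Si^{++}]$, and let $\mathcal{B}$ be the $\BO$-submodule of $\End_\BO(\BS(\Si))$ spanned by the endomorphisms $Z(\Si\times\I,\varc(2c)\cup s)$ with $s$ a $v$-colored banded link in $(\Si\times\I)\setminus\varc$. I will use throughout that $Z$ is $\BO$-linear in the skein element carried by the cobordism, and that composing $Z(\Si\times\I,\varc(2c)\cup s_1)$ with $Z(\Si\times\I,\varc(2c)\cup s_2)$ corresponds to stacking the two copies of $\Si\times\I$; as this gluing carries no framing anomaly, the composite equals $Z(\Si\times\I,\varc(2c)\cup s)$, where $s$ is the $v$-colored link obtained by placing $s_1$ above $s_2$. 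In particular $\mathcal{B}$ is closed under composition and contains the identity $Z(\Si\times\I,\varc(2c))$, so $\mathcal{B}$ is an $\BO$-subalgebra of $\End_\BO(\BS(\Si))$.

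For the inclusion $\mathcal{A}\subseteq\mathcal{B}$ it suffices, by Corollary~\ref{eom} and since $\mathcal{B}$ is a subalgebra, to check that each generator $Z(\Si\times\I,\varc(2c)\cup\alpha_0(\omega_+))$ lies in $\mathcal{B}$. The key point is that $\omega_+$ is an $\BO$-linear combination of $v$-colored banded links in the solid torus: by Proposition~\ref{pw} we have $\omega_+\in\BS(\mathcal T)$, and $\BS(\mathcal T)$ coincides with the $\BO$-span of $v$-colored banded links in the solid torus, because every closed colored banded graph in the solid torus is such a combination once one writes the core $z$ as $(1+A)v'-2$ (a combination with coefficients in $\BO$) and uses $v\sim v'$ (see (\ref{vvp})). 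Cabling $\alpha$ by such a combination exhibits $\alpha_0(\omega_+)$ as an $\BO$-linear combination of $v$-colored banded links lying in a regular neighborhood of $\alpha$, hence in $(\Si\times\I)\setminus\varc$ because $\alpha$ avoids the colored point. By $\BO$-linearity of $Z$, the generator therefore lies in $\mathcal{B}$.

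For the inclusion $\mathcal{B}\subseteq\mathcal{A}$, let $s$ be a $v$-colored banded link in $(\Si\times\I)\setminus\varc$. Putting $S=\Si$ with an open disk around the colored point removed, so that $(\Si\times\I)\setminus\varc\cong S\times\I$, Proposition~\ref{cflatv} writes $s$ as an $\BO$-linear combination of flat layered $v$-colored banded links; by $\BO$-linearity of $Z$ we may assume $s=\gamma_1(v)\cdots\gamma_r(v)$ is itself a product of flat $v$-colored banded knots stacked along $\I$. Each $\gamma_i$ is then a simple closed curve on $S$ (a connected crossingless diagram is an embedded circle), hence a simple closed curve on $\Si$ avoiding the colored point, and since composition is stacking we have $Z(\Si\times\I,\varc(2c)\cup s)=\prod_i Z(\Si\times\I,\varc(2c)\cup\gamma_i(v))$. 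It thus suffices to treat a single factor $Z(\Si\times\I,\varc(2c)\cup\gamma(v))$. By Proposition~\ref{pw}, $1,\omega_+,\ldots,\omega_+^{d-1}$ is a basis of $\BS(\mathcal T)$ and $v\in\BS(\mathcal T)$, so $v=\sum_{k=0}^{d-1}c_k\,\omega_+^k$ with all $c_k\in\BO$. Cabling $\gamma$ yields $\gamma(v)=\sum_k c_k\,\gamma(\omega_+^k)$, and $\gamma(\omega_+^k)$ is $k$ parallel copies of $\gamma(\omega_+)$ stacked along $\gamma$, so $Z(\Si\times\I,\varc(2c)\cup\gamma(\omega_+^k))=Z(\Si\times\I,\varc(2c)\cup\gamma_0(\omega_+))^k$. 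The latter is one of the generators of $\mathcal{A}$ listed in Corollary~\ref{eom}, so $Z(\Si\times\I,\varc(2c)\cup\gamma(v))=\sum_k c_k\,Z(\Si\times\I,\varc(2c)\cup\gamma_0(\omega_+))^k$ lies in $\mathcal{A}$, and the proof is complete.

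The step I expect to need the most care is the anomaly and lattice bookkeeping underlying the remarks above: one must make sure the gluing of two copies of $\Si\times\I$ (with their vertical colored arcs) contributes no power of $\kappa$, so that composition really is clean stacking and $\mathcal{B}$ is an honest subalgebra of $\End_\BO(\BS(\Si))$; and when $p\equiv1\pmod 4$ one must check that the endomorphisms $Z(\Si\times\I,\varc(2c)\cup s)$ preserve the smaller lattice $\BSplus(\Si)$ and not merely $\BS_p(\Si)$ --- this is where the arguments dealing with the case $p\equiv1\pmod 4$ later in \S\ref{sec5} are needed. By contrast, the skein-theoretic heart of the argument --- the flat-layered reduction and the two descriptions of $\BS(\mathcal T)$ --- is already in hand from the preceding sections.
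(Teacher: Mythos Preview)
Your proof is correct and follows essentially the same approach as the paper: both prove the two inclusions using Corollary~\ref{eom} together with Proposition~\ref{pw} (to pass between $v$-colorings and $\omega_+$-colorings via the basis $\{1,\omega_+,\ldots,\omega_+^{d-1}\}$ of $\BS(\mathcal T)$) and Proposition~\ref{cflatv} (to reduce arbitrary $v$-colored links to flat layered ones). Your explicit remarks that $\mathcal{B}$ is a subalgebra (via stacking) and your closing paragraph on the anomaly and the $p\equiv 1\pmod 4$ lattice issue are apt; the paper leaves the former implicit and defers the latter to the end of \S\ref{sec5}, exactly as you anticipate.
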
 

\begin{proof} Let $\cE$ be the image of $\BO[\widetilde
  \Gamma_\Si^{++}]$ in $\End_\BO(\BS(\Si))$, and let $\cE'$ be the
  $\BO$-submodule of $\End_\BO(\BS(\Si))$ described in the lemma. Note
  that both $\cE$ and $\cE'$ are sub-algebras of
  $\End_\BO(\BS(\Si))$. We
  must show that $\cE=\cE'$.

The inclusion $\cE \subset \cE'$   
is easy: By Corollary  \ref{eom}, we know that $\cE$ is  the $\BO$-subalgebra
  generated by the endomorphisms $Z(\Si\times\I, \varc(2c) \cup
  \alpha_0(\omega_+))$  associated to simple
  closed curves $\alpha$ on $\Si$ avoiding the colored banded point.
Notice that $\alpha_0$ is a flat banded knot, so that $\cE$ is the
subalgebra generated by the endomorphisms coming from flat
$\omega_+$-colored banded knots.    
  As $\omega_+ \in S({\mathcal T})$, and $S({\mathcal T})$ is spanned
  by $v$-colored  banded links
in the solid torus,  
we  see that $\cE \subset \cE'$.

For the opposite inclusion, we must show that  every  $v$-colored
banded link $s$ in 
 $(\Si\times\I) \setminus  \varc$   
can be rewritten as an $\BO$-linear combination of products of flat
$\omega_+$-colored banded knots. We proceed as follows. First, Proposition
\ref{cflatv} tells 
us that $s$ 
can be rewritten as an $\BO$-linear combination of flat layered
$v$-colored banded links, {\em i.e.,} products of flat $v$-colored
banded knots. Thus, it is enough to show that a flat
$v$-colored banded knot can be rewritten as an $\BO$-linear
combination of products of flat 
$\omega_+$-colored banded knots. But this follows from Proposition
\ref{pw}. This completes the proof.
\end{proof}

In the next step of the proof of Theorem~\ref{mainth}, we wish to
replace $\Si\times \I$ by another cobordism from $\Si$ to itself. For
this we use the following general fact about 
Integral
TQFT.

\begin{lem}\lbl{sv2} Let $M$ be a compact connected oriented 3-manifold
  with $\partial M = -\Si \sqcup   \Si$, and let $G
  \subset M$ be a colored banded graph which meets the boundary at the
  colored points of $-\Si$ and $\Si$. 
   Let $(M', G')$ be obtained from $(M,G)$ by
  surgery on some framed link in $M 
\setminus  G$. 
 Then the set of endomorphisms 
$$\{Z(M, G \cup s) \,\vert\, \text{$s$ is a $v$-colored
banded link in $M \setminus  G$}\}$$ spans the same $\BO_p$-submodule of
$\End_{\BO_p}( \BS_p(\Si) )$ as  the set of endomorphisms  
$$\{Z(M', G' \cup s) \,\vert\, \text{$s$ is a $v$-colored
banded link in $M' \setminus  G'$}\}~.$$
\end{lem}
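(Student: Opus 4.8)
The plan is to reduce the statement to the fundamental fact, already built into Integral TQFT, that surgery on a framed link $L$ in $M \setminus G$ is implemented (up to a unit) by cabling each component of $L$ with the surgery element $\omega_+$, and that $\omega_+$ lies in $\BS_p(\mathcal T)$, hence is an $\BO_p$-linear combination of $v$-colored banded links in a solid torus neighborhood of that component (by Proposition~\ref{pw}, together with \eqref{vvp} and $h\sim 1+A$). First I would fix notation: write $L\subset M\setminus G$ for the framed surgery link, so that $(M',G')$ is obtained from $(M,G)$ by surgery on $L$. Let $A_M$ (resp.\ $A_{M'}$) denote the $\BO_p$-submodule of $\End_{\BO_p}(\BS_p(\Si))$ spanned by the endomorphisms $Z(M, G\cup s)$ (resp.\ $Z(M',G'\cup s)$) as $s$ ranges over $v$-colored banded links in $M\setminus G$ (resp.\ in $M'\setminus G'$). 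The goal is $A_M = A_{M'}$, and by symmetry of the surgery relation (one can undo the surgery on $L$ by surgery on the dual link $L'\subset M'\setminus G'$) it suffices to prove one inclusion, say $A_{M'}\subseteq A_M$.

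For that inclusion I would argue as follows. Given a $v$-colored banded link $s'$ in $M'\setminus G'$, isotope it off a tubular neighborhood of the dual surgery link $L'$, so that $s'$ may be regarded as a $v$-colored banded link sitting in $M\setminus(G\cup L)$ as well. The surgery formula for Integral TQFT (this is exactly the mechanism by which the $\omega_+$-cabling implements a twist/surgery; see the discussion around Figure~\ref{figomega} and \cite{GM}) gives
\begin{equation*}
Z(M', G'\cup s') \sim Z(M,\, G\cup s' \cup L_{\omega_+})~,
\end{equation*}
where $L_{\omega_+}$ denotes the link $L$ with each component cabled by $\omega_+$ and $\sim$ indicates equality up to a unit of $\BO_p$ (a power of $\kappa$, absorbed because we only care about the spanned submodule). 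Now $\omega_+\in \BS_p(\mathcal T)$ and, by Proposition~\ref{pw} together with \eqref{vvp}, each $\omega_+$-colored component is an $\BO_p$-linear combination of $v$-colored banded links confined to the corresponding solid-torus neighborhood of that component of $L$. Expanding multilinearly over the components of $L$, we conclude that $Z(M,\, G\cup s'\cup L_{\omega_+})$ is an $\BO_p$-linear combination of endomorphisms $Z(M, G\cup s)$ with $s$ a $v$-colored banded link in $M\setminus G$. Hence $Z(M',G'\cup s')\in A_M$, and since the $Z(M',G'\cup s')$ span $A_{M'}$ we get $A_{M'}\subseteq A_M$.

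The reverse inclusion $A_M\subseteq A_{M'}$ is obtained by the identical argument with the roles of $(M,G,L)$ and $(M',G',L')$ interchanged, using that $(M,G)$ is recovered from $(M',G')$ by surgery on the dual link $L'$. Combining the two inclusions yields $A_M=A_{M'}$, as claimed. The main obstacle I anticipate is bookkeeping the framing anomaly cleanly: the surgery formula produces $\kappa$-powers and an overall sign, and one must check that these are genuinely units in $\BO_p$ (they are — $\kappa$ is a root of unity, as recalled in the footnote after Proposition~\ref{act}) so that passing to the spanned $\BO_p$-submodule eliminates them. A secondary technical point is ensuring, when expanding $\omega_+$ via $v$-colored links, that the resulting $v$-colored links stay in the complement of $G$ (resp.\ $G'$); this is automatic since the solid-torus neighborhoods of the surgery link were chosen disjoint from $G$ to begin with.
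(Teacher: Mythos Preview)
Your proposal is correct and follows the same approach as the paper's own proof: use the surgery axiom to rewrite $Z(M',G'\cup s')$ as $Z(M,G\cup s'\cup L_X)$ for a skein element $X\in\BS_p(\mathcal T)$, expand $X$ as an $\BO_p$-linear combination of $v$-colored links in the solid torus, and then invoke reversibility of surgery for the other inclusion. One terminological slip worth fixing: the element that implements \emph{surgery} is $\omega$, not $\omega_+$ (Figure~\ref{figomega} shows that $\omega_+$ implements a Dehn \emph{twist}); the paper's proof cites the surgery axiom \cite[Lemma~11.1]{GM3} and the fact that $\omega\in\BS_p(\mathcal T)$, rather than Proposition~\ref{pw}.
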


\begin{proof}
This is similar to  \cite[Proposition~1.9]{BHMV2} in the
skein-theoretic 
construction of TQFT over a field. The proof is based on 
 the surgery axiom  
 (see {\em
    e.g.}  
\cite[Lemma~11.1]{GM3})
which says that the effect of performing surgery  along a framed curve
in $M$   is the same 
  as the effect of cabling that curve  with 
a certain skein element $\omega$. 
Given the fact that $\omega$ lies in $S_p({\mathcal T})$ and can
therefore  be  expressed as a $\BO_p$-linear combination of $v$-colored
links
in a solid torus, 
and the fact that surgery is reversible (we can also perform
surgery $M'$ in the complement of $G'$ to recover $M$), the result
follows.
\end{proof}

Notice that we have formulated the above Lemma~\ref{sv2}  for the lattice
$\BS_p(\Si)$. In the case $p\equiv 1\pmod
4$ (where $\BO\subsetneq \BO_p$ and $\BS(\Si)\subsetneq  \BS_p(\Si)$),
the
  lemma as stated does not hold for $\BS(\Si)$.
  This is not really a problem, as there is also a version of
Lemma~\ref{sv2} for the lattice $\BS(\Si)$.
 But we defer the discussion of how to deal with this issue to the end
 of this section. 
Therefore, in what
follows, we will work with the lattice $\BS_p(\Si)$. We will thus
obtain a proof of Theorem~\ref{mainth} for $\BS_p(\Si)$ in place of
$\BS(\Si)$. This will already constitute a proof of Theorem~\ref{mainth} in the case $p\equiv -1\pmod
4$. Once this is done, we will then explain the extra arguments needed in the case $p\equiv 1\pmod
4$  to conclude the proof.

In the second step of the proof of Theorem~\ref{mainth}, we use  
Lemma~\ref{sv2}  
to replace $\Si\times \I$ with the interior
connected sum of two handlebodies, $H_1\# H_2$, 
where $H_2$ is a
genus $g$ handlebody with boundary $\Si$,  and $H_1$ is $H_2$ with
the reversed orientation. Note that $H_1\# H_2$ is a cobordism
from $\Si$ to itself. 

We will need an explicit description of this construction. To this end, we identify $\Si\times\I$ with the
complement of a neighborhood of two 
linked graphs $G_1, G_2 \subset S^3$, as drawn in Figure
\ref{G1G2}. In this figure, the vertical arc $\varc$ is drawn as a thickened line. 
Note that $C$ is not part of $G_1$ nor of $G_2$, as $C$ lies in
$\Si\times \I$.

\begin{figure}[h]
\includegraphics[width=1.8in]{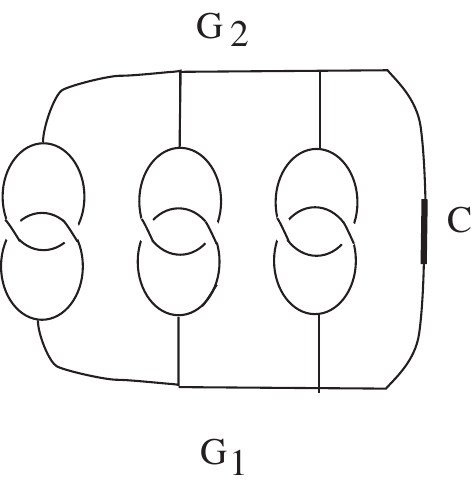} \quad \quad   \includegraphics[width=1.8in]{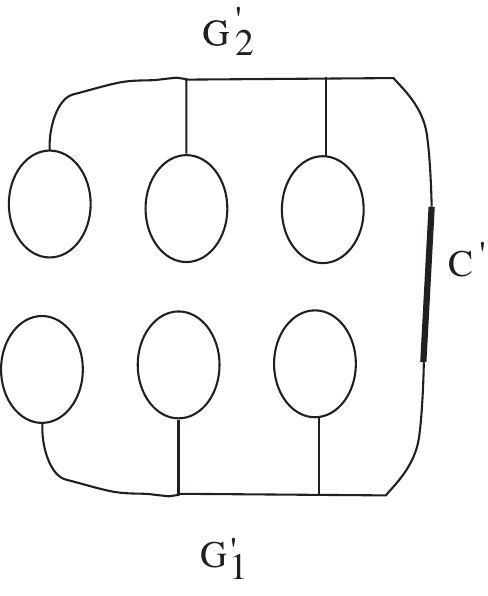}
\caption{$\Si \times I$ can be surgered to obtain $H_1{\#}H_2$ } \lbl{G1G2}
\end{figure}

Also we identify $H_1\# H_2$  with the complement of a
neighborhood of the disjoint union of two graphs $G'_1\sqcup G'_2 \subset
S^3$. One may pass from $\Si\times\I$ to $H_1\# H_2$ by doing
$+1$ framed surgery along $g$ curves which encircle
each of the clasps between $G_1$ and $G_2$. The thickened  line
labelled $\varc'$ in Figure~\ref{G1G2} represents the image of $\varc$ after the surgery.  Again we
remark that $\varc'$ lies in $H_1\# H_2$ and is
not part of $G'_1$ nor of $G'_2$. Note that $\varc'$ meets
the $2$-sphere along which the connected sum occurs  in a single
point.

Lemmas \ref{ve} and \ref{sv2} allow one to conclude that: 
\begin{lem}\lbl{esvs} The image of $\BO_p[\widetilde
  \Gamma_\Si^{++}]$ in $\End(\BS_p(\Si))$ is  the $\BO_p$-submodule
  spanned by 
 the endomorphisms  
$Z(H_1\# H_2, \varc'(2c) \cup s),$ 
where $s$ is  
any 
$v$-colored
banded link in $H_1\# H_2\setminus  \varc'.$
\end{lem}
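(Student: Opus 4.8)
The plan is to deduce Lemma~\ref{esvs} by simply combining the two preceding lemmas in the obvious way. First I would recall from Lemma~\ref{ve} that the image of $\BO_p[\widetilde \Gamma_\Si^{++}]$ in $\End_{\BO_p}(\BS_p(\Si))$ is exactly the $\BO_p$-submodule spanned by the endomorphisms $Z(\Si\times\I, \varc(2c) \cup s)$, where $s$ ranges over all $v$-colored banded links in $(\Si\times\I)\setminus\varc$. (Here I would note that Lemma~\ref{ve} as stated in the excerpt is phrased for $\BS(\Si)$, but the same proof applies verbatim to $\BS_p(\Si)$: the only inputs are Corollary~\ref{eom}, Proposition~\ref{cflatv}, and Proposition~\ref{pw}, all of which hold for $\BS_p$ as well.)

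Next I would apply Lemma~\ref{sv2} with $M=\Si\times\I$ and $G=\varc(2c)$, the vertical arc colored $2c$. The pair $(M',G')$ obtained by the prescribed surgery is exactly $(H_1\# H_2, \varc'(2c))$: as described in the text and Figure~\ref{G1G2}, one passes from $\Si\times\I$ to $H_1\# H_2$ by doing $+1$-framed surgery along the $g$ curves encircling the clasps between $G_1$ and $G_2$, and $\varc'$ is the image of $\varc$ under this surgery, which avoids the surgery link and hence plays the role of $G'$. The hypotheses of Lemma~\ref{sv2} are satisfied: $\Si\times\I$ is compact connected oriented with boundary $-\Si\sqcup\Si$, the arc $\varc$ meets the boundary at the colored points, and the surgery link lies in the complement of $\varc$. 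Therefore Lemma~\ref{sv2} gives that the span of $\{Z(\Si\times\I, \varc(2c)\cup s)\}$ equals the span of $\{Z(H_1\# H_2, \varc'(2c)\cup s)\}$ as $\BO_p$-submodules of $\End_{\BO_p}(\BS_p(\Si))$.

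Combining these two facts yields the lemma. There is no serious obstacle here; the only point requiring a little care is the bookkeeping identifying the abstract surgered pair $(M',G')$ of Lemma~\ref{sv2} with the concrete pair $(H_1\# H_2, \varc'(2c))$ drawn in Figure~\ref{G1G2}, i.e.\ checking that the $g$ surgery curves around the clasps indeed transform $\Si\times\I$ into the interior connected sum $H_1\# H_2$ and that $\varc$ becomes $\varc'$. This is a standard Kirby-calculus observation about the complement of the linked graphs $G_1,G_2\subset S^3$, and is essentially recorded in the discussion preceding the lemma. Once that identification is in hand, the proof is immediate:

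\begin{proof} By Lemma~\ref{ve} (whose proof applies equally to $\BS_p(\Si)$), the image of $\BO_p[\widetilde\Gamma_\Si^{++}]$ in $\End_{\BO_p}(\BS_p(\Si))$ is the $\BO_p$-submodule spanned by the endomorphisms $Z(\Si\times\I, \varc(2c)\cup s)$, where $s$ runs over $v$-colored banded links in $(\Si\times\I)\setminus\varc$. As explained above (see Figure~\ref{G1G2}), doing $+1$-framed surgery on the $g$ curves encircling the clasps between $G_1$ and $G_2$ transforms $(\Si\times\I, \varc(2c))$ into $(H_1\# H_2, \varc'(2c))$, and this surgery link lies in the complement of $\varc$. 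Applying Lemma~\ref{sv2} with $M=\Si\times\I$, $G=\varc(2c)$, we conclude that the $\BO_p$-submodule of $\End_{\BO_p}(\BS_p(\Si))$ spanned by the $Z(\Si\times\I, \varc(2c)\cup s)$ coincides with the one spanned by the $Z(H_1\# H_2, \varc'(2c)\cup s)$, where now $s$ runs over $v$-colored banded links in $(H_1\# H_2)\setminus\varc'$. This proves the lemma.
\end{proof}
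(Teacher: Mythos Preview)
Your proof is correct and follows exactly the approach indicated in the paper, which simply records that Lemmas~\ref{ve} and~\ref{sv2} together yield the result. Your added remark that the proof of Lemma~\ref{ve} carries over verbatim to $\BS_p(\Si)$, and your explicit identification of $(M',G')$ with $(H_1\# H_2,\varc'(2c))$ via the $+1$-framed surgeries on the clasp curves, are precisely the bookkeeping details the paper leaves implicit.
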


We can describe this submodule as follows. First, note that $G'_1$ and  $G'_2$ are lollipop trees for two
handlebodies  $\BH_1$ and  $\BH_2$  
(not to be confused with $H_1$ and $H_2$) 
whose  boundaries are copies
of $\Si$ (and each $G'_i$ meeting the boundary of $\BH_i$ in the
colored point). 
Thus the second
figure in Figure \ref{G1G2} describes a decomposition of $S^3$ into
three pieces: the cobordism
$H_1\# H_2$ and the handlebodies $\BH_1$ and  $\BH_2$. Now let
$\nu(\varc')\subset H_1\# H_2$ be a tubular neighborhood of $\varc'$ which meets the
handlebodies $\BH_1$ and $\BH_2$ along 2-disks. The complement of
$\BH_1 \sqcup \BH_2 \cup \nu(\varc')$ in $S^3$ (which is the same as
the complement of $\nu(\varc')$ in $H_1\# H_2$) is a genus $2g$
handlebody $H$. A lollipop tree $T$  for $H$ is drawn in
Figure \ref{G1G2primeT}.

\begin{figure}[h]
\includegraphics[width= 1.8in]{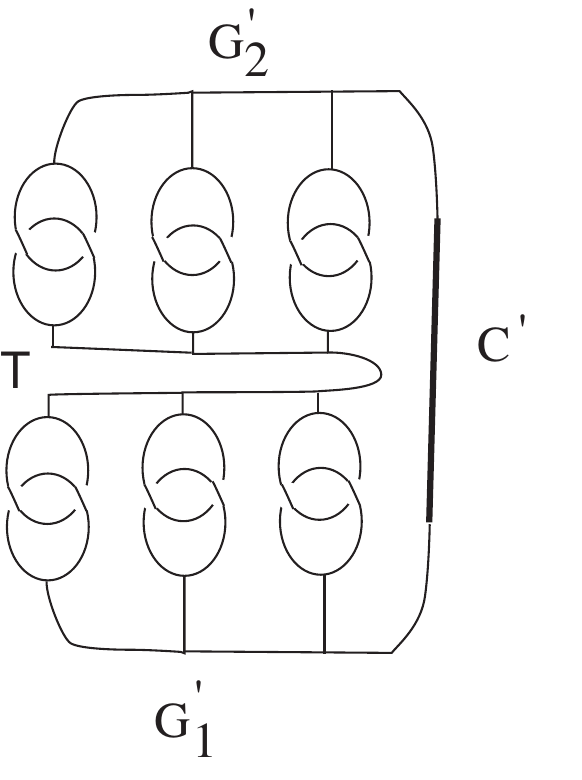} 
\caption{A lollipop tree $T$ for the handlebody  $H$ of genus $2g$
  which is  the complement 
in $S^3$  
of  the union 
of 
neighborhoods of 
 $G'_1$ and $G'_2$ 
and a neighborhood of $\varc'$. } \lbl{G1G2primeT}
\end{figure}

Now observe that  any  $v$-colored banded link in  
      $ H_1 \# H_2 \setminus  \varc'$  
 may be isotoped to  lie in $H$, and the collection of such links then span  
the lattice 
$\BS(\partial H).$ 
The set 
$$\{ \bb_\sigma\, |\,  \sigma \text{ is a coloring of $T$}\}$$ 
(recall that all colorings are assumed small and admissible) is a
basis for  
$\BS(\partial H )$ and hence also for $\BS_p(\partial H ).$
Thus Lemma \ref{esvs} can be restated more
explicitly as follows:

\begin{lem}\lbl{esvl} The image of $\BO_p[\widetilde
  \Gamma_\Si^{++}]$ in $\End(\BS_p(\Si))$ is  the $\BO_p$-submodule
  spanned  by the endomorphisms 
 $$
 Z(H_1 \# H_2, \varc'(2c) \cup \bb_\sigma)~,$$ where $\sigma$ runs
through the colorings of $T$.
\end{lem}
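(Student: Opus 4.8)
The plan is to deduce Lemma~\ref{esvl} directly from Lemma~\ref{esvs}, by re-expressing the $v$-colored links $s$ occurring there in terms of the basis $\{\bb_\sigma\}$ of $\BS_p(\partial H)$. First I would pass to links lying in $H$: a $v$-colored banded link $s$ in $(H_1\#H_2)\setminus\varc'$ is compact and disjoint from the arc $\varc'$, so after an isotopy of $(H_1\#H_2)\setminus\varc'$ it can be taken to lie in the complement $H$ of a small tubular neighborhood $\nu(\varc')$ of $\varc'$, and such an isotopy leaves $Z(H_1\#H_2,\varc'(2c)\cup s)$ unchanged. Hence, by Lemma~\ref{esvs}, the image of $\BO_p[\widetilde\Gamma_\Si^{++}]$ in $\End(\BS_p(\Si))$ equals the $\BO_p$-span of the endomorphisms $Z(H_1\#H_2,\varc'(2c)\cup s)$ with $s$ now ranging over $v$-colored banded links in $H$.

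The key point is then that $Z(H_1\#H_2,\varc'(2c)\cup s)$ depends on the skein $s\subset H$ only through the TQFT class $[s]=Z(H,s)\in V_p(\partial H)$ it determines (viewing the handlebody $H$ as a cobordism from the empty surface to $\partial H$), and depends $\BO_p[h^{-1}]$-linearly on it. This is an instance of the gluing axiom: a skein in $H$ that maps to $0$ in $V_p(\partial H)$ contributes $0$ to $Z$ once it is placed inside $H_1\#H_2$ and the remaining pieces $\BH_1,\BH_2$ are glued on; equivalently, $Z(H_1\#H_2,\varc'(2c)\cup(-))$ factors as $\Phi\circ Z(H,-)$ for a linear map $\Phi\colon V_p(\partial H)\to\End(V_p(\Si))$ obtained by gluing in the piece $\nu(\varc')$ carrying $\varc'(2c)$. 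Since $\Phi([s])=Z(H_1\#H_2,\varc'(2c)\cup s)$ preserves $\BS_p(\Si)$ for every $v$-colored link $s$, the restriction of $\Phi$ to $\BS_p(\partial H)$ takes values in $\End_{\BO_p}(\BS_p(\Si))$.

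Now I would invoke the two facts recorded just before the statement: the classes $[s]$ of $v$-colored banded links $s$ in $H$ span $\BS_p(\partial H)$ over $\BO_p$ (because such links span $\BS(\partial H)$, whose $\BO_p$-span inside $V_p(\partial H)$ is $\BS_p(\partial H)$), while the classes $[\bb_\sigma]$, as $\sigma$ runs over the colorings of the lollipop tree $T$ for $H$, form an $\BO_p$-basis of $\BS_p(\partial H)$. Applying $\Phi$ to both spanning sets yields
\[\BO_p\text{-span}\{\,Z(H_1\#H_2,\varc'(2c)\cup s)\,\}=\Phi\bigl(\BS_p(\partial H)\bigr)=\BO_p\text{-span}\{\,Z(H_1\#H_2,\varc'(2c)\cup\bb_\sigma)\,\},\]
which, combined with the first paragraph and Lemma~\ref{esvs}, is exactly the assertion of Lemma~\ref{esvl}.

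The only step carrying real content is the factorization of $Z(H_1\#H_2,\varc'(2c)\cup(-))$ through $V_p(\partial H)$, i.e. the statement that a skein in the sub-handlebody $H$ matters only through its TQFT class. This is built into the skein-theoretic construction of the $\SO(3)$-TQFT, where $V_p(\partial H)$ is by definition a quotient of the Kauffman bracket skein module of $H$ (along which the relevant Hermitian pairing is nondegenerate), so the argument is short; everything else is bookkeeping with the handlebody picture of Figure~\ref{G1G2primeT} and the known basis $\{\bb_\sigma\}$.
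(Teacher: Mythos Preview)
Your argument is correct and follows essentially the same route as the paper: isotope the $v$-colored links into the handlebody $H$, observe that the induced endomorphism depends only on the class in $\BS_p(\partial H)$, and then replace the spanning set of $v$-colored links by the basis $\{\bb_\sigma\}$. The paper compresses this into a one-sentence remark preceding the lemma, whereas you have spelled out the factorization $Z(H_1\#H_2,\varc'(2c)\cup(-))=\Phi\circ Z(H,-)$ through $V_p(\partial H)$ explicitly; this is a welcome clarification rather than a different approach.
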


This completes the second step of the proof of
Theorem~\ref{mainth}. For the third and last step, we must first recall the
orthogonal lollipop bases for the integral TQFT modules constructed in
\cite{GM2}. 
The following discussion is valid for the lattices $\BS(\Si)$ as
defined in (\ref{tens2}).  
The elements of an orthogonal lollipop basis are denoted by
$\{\bt_\sigma\}$, and are again indexed by (small admissible) colorings $\sigma$
of a lollipop tree for a handlebody with boundary the given
surface. Of course, for a given surface this basis 
(as well as the original basis $\{\bb_\sigma\}$ 
defined in \cite{GM}) 
depends on a choice
of handlebody and 
lollipop tree with it. The fact that our notation 
in what follows 
does not indicate
this should cause no confusion. Also we note  that, in the proof we
present,  
besides our original surface $\Si$ of
genus $g$ with one  banded point colored $2c$, we also need to
consider $\partial H$, which is a surface of genus $2g$ with no colored points,
and the basis 
of $\BS(\partial H)$  
associated to the lollipop tree $T$ (see Figure \ref{G1G2primeT}).  
For this reason,
in the following discussion of bases, 
we use the letter $2e$ to denote the color of
the colored banded point and thus 
of 
the trunk edge (see 
formulas (\ref{bt}) and (\ref{bts}) below).    In one case, this
trunk color is $2c$ and in the other case 
it 
is zero. The discussion in 
\S\ref{sec.intro}
of bases, when modified in this way, applies as
well to $\BS(\partial H)$.

The important property of the orthogonal lollipop basis for us is that $\{\bt_\sigma\}$ is orthogonal with respect to the Hopf pairing
$$(( \ ,\, )): \BS(\Si)\times \BS(\Si) \rightarrow \BO$$ which is
based on placing skeins in 
neighborhoods of  
linked lollipop trees and evaluating.  See  
\cite[\S3]{GM2}  
for more details about this pairing. 
The linked lollipop trees defining the Hopf
  pairing are as in the left
  part of Figure~\ref{G1G2} in the case where the surface is
  $\Si_g(2c)$, and as in Figure~\ref{G1G2primeT} in the case where the
  surface is $\partial H$.

  In \cite{GM2}, there is 
also 
defined a 
basis
$\{\bt^{\#}_\sigma\}$ for the dual lattice
$$ \BS^\#(\Si)= \{ x \in V(\Si)\, | \, (( x,y)) \in  \BO \ \ \forall y \in
\BS(\Si)\}$$ 
 so that \cite[Remark 3.5]{GM2} $$((\bt_\sigma, \bt^{\#}_{\sigma'}))\sim
\delta^{\sigma}_{\sigma'}~.$$    
 Moreover,  $\bt^{\#}_\sigma$ is a power of $h^{-1}$ times
 $\bt_\sigma$. For our computation below, it
 will be convenient to express this rescaling 
by a power of $h^{-1}$   
as follows. 
Given a coloring $\sigma$ of a lollipop tree, 
let $A(\sigma)=\sum a_i$ denote the sum of
the half-colors 
 at the stick edges 
(see
 Figure~\ref{lol}). Then  
for a certain skein element $x_\sigma$
\cite[Equation (6), Corollary 3.4]{GM2}, we have  
\begin{align} \bt_\sigma & = h^{- \lfloor (A(\sigma)-e)/2
    \rfloor} x_\sigma   \lbl{bt}\\ 
{\bt^\#}_\sigma & = h^{-\lceil (A(\sigma)+e)/2 \rceil} x_\sigma \lbl{bts}
\end{align} 
where $\lfloor x \rfloor$ is the  
greatest
integer $\leq x$, and  $\lceil x \rceil$ is the smallest integer $\geq x.$ 

We are now ready for the final step in the proof of
Theorem~\ref{mainth}. Recall the lollipop tree $T$ in the genus $2g$
handlebody $H$.  
If  $\sigma$ is a coloring of $T$, 
let 
$$ Z(\sigma)=Z(H_1 \# H_2, \varc'(2c) \cup \bt_{\sigma})~.$$
The endomorphisms 
$Z(\sigma)$ span the 
 image of $\BO_p[\widetilde
  \Gamma_\Si^{++}]$ in $\End(\BS_p(\Si))$
  by
  Lemma~\ref{esvl}.
 We shall compute the matrix
  of $Z(\sigma)$ 
with respect to the orthogonal 
  lollipop basis of $\BS_p(\Si)$. 
We shall find that the matrix of $Z(\sigma)$ is either zero or a scalar
multiple of an elementary matrix. 

To carry out this computation, we need to 
fix some
notation. Recall that the orthogonal lollipop basis of $\BS_p(\Si)$
consists of the 
$\bt_{\sigma}$ where $\sigma$ runs through the colorings of $G_1$
with trunk color $2c$. For two such colorings 
$\sigma_1$ and $\sigma_2$,
let
$E_{\sigma_2,\sigma_1}$ be the elementary matrix with
$(\sigma_2,\sigma_1)$ entry equal to one, and all other entries equal
to 
zero. (Thus, the corresponding endomorphism sends $\bt_{\sigma_1}$ to
$\bt_{\sigma_2}$, and all other basis vectors to zero.) Next, as  
$G'_i$ is simply $G_i$ in a different position,  we will identify
colorings of $G_i$ with those of $G'_i$. There is also a graph isomorphism 
$r: G_2 \rightarrow G_1$ that is  given 
by reflection across a horizontal axis. 
If $\sigma$ is a coloring of $G_1$, then  $r$ 
induces a coloring $ \sigma \circ r$ of
$G_2$, which we denote 
by 
$r(\sigma)$. 
Let $T'$ denote the graph
$G'_1\cup\varc' \cup G'_2$. Given two colorings  $\sigma_2, \sigma_1$
of $G_1$ which are colored $2c$ on the trunk edge, let  
${\sigma_1} \#_c {\sigma_2}$ denote the coloring of $T'$ obtained by
coloring $G'_1$ by ${\sigma_1}$, $G'_2$ by $r(\sigma_2)$, and 
$\varc'$ by $2c$. Finally, we call 
 $o: T \rightarrow T'$ 
   the graph isomorphism
that sends a loop to the loop that clasps it.

\begin{lem}\lbl{5.6} Let $\sigma$ be a coloring of $T$. Then
  $ Z(\sigma)=0$ unless $\sigma =o (\sigma_1 \#_c
  \sigma_2)$ for some colorings  
$\sigma_1$  and $\sigma_2$
of
  $G_1$ with trunk edge colored by $2c$. Conversely, if $\sigma =o (\sigma_1 \#_c
  \sigma_2)$, then the matrix of $ Z(\sigma)$ is a scalar multiple of
  the elementary matrix  $E_{\sigma_2,\sigma_1}$. The scalar multiple
  is $h$ times  a unit in $\BO_p$ if $\sigma_1$ is odd and $\sigma_2$
  is even.
(See Definition~\ref{eoc} for the definition of odd and even
colorings.)  
 In
  all other cases, the scalar multiple is a unit.
\end{lem}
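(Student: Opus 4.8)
The plan is to compute the matrix of $Z(\sigma)$ in the orthogonal lollipop basis $\{\bt_\tau\}$ of $\BSp(\Si)$ by turning each matrix entry into one evaluation of a colored banded graph in $S^3$, and then to read off the answer from orthogonality. First I would extract the matrix entries through the Hopf pairing. Since $\{\bt_\tau\}$ is orthogonal and $\{\bt^\#_\tau\}$ is, up to units, its dual basis, the $(\sigma_2,\sigma_1)$-entry of the matrix of $Z(\sigma)$ equals, up to a unit of $\BO_p$, the pairing $((\,Z(\sigma)\bt_{\sigma_1}\,,\,\bt^\#_{\sigma_2}\,))$. Using Lemma~\ref{esvl} together with the decomposition of $S^3$ into the genus $2g$ handlebody $H$ (carrying the tree $T$ with skein $\bt_\sigma$) and its complement $\BH_1\sqcup\BH_2\cup\nu(\varc')$, this pairing is realized as the $S^3$-evaluation of the colored banded graph consisting of $T$ carrying $\bt_\sigma$, of $G'_1$ carrying $\bt_{\sigma_1}$, of $G'_2$ carrying $\bt^\#_{\sigma_2}$ (transported by the reflection $r$), and of the arc $\varc'$ colored $2c$, assembled as in Figure~\ref{G1G2primeT}. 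The asymmetric appearance of $\bt_{\sigma_1}$ on $\BH_1$ and of the \emph{dual} vector $\bt^\#_{\sigma_2}$ on $\BH_2$ is exactly what the matrix-entry extraction produces, and it is the source of the asymmetry between $\sigma_1$ and $\sigma_2$ in the statement.

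Next I would recognise the complementary configuration $G'_1\cup\varc'\cup G'_2$ (with its colours and skeins) as representing, up to an explicit power of $h$, an orthogonal lollipop basis vector of $\BS(\partial H)$. Namely, let $\tau=\sigma_1\#_c\sigma_2$ be the coloring of the tree $T'=G'_1\cup\varc'\cup G'_2$ restricting to $\sigma_1$ on $G'_1$, to $r(\sigma_2)$ on $G'_2$, and to $2c$ on $\varc'$. The gluing behaviour of the orthogonal lollipop basis established in \cite{GM2}, combined with the rescaling formulas (\ref{bt}) and (\ref{bts}) applied with trunk color $2c$ on $G'_1,G'_2$ and trunk color $0$ on $T$, identifies this configuration with $h^{\nu}$ times a unit times $\bt_{\tau}$, where
$$\nu=\left\lfloor \frac{A(\sigma_1)+A(\sigma_2)}{2}\right\rfloor-\left\lfloor\frac{A(\sigma_1)-c}{2}\right\rfloor-\left\lceil\frac{A(\sigma_2)+c}{2}\right\rceil .$$
Here one uses that the loop edges of $T'$ are those of $G'_1$ together with those of $G'_2$, so $A(\tau)=A(\sigma_1)+A(\sigma_2)$, and that the skein $x_\tau$ factors up to a unit as the product of $x_{\sigma_1}$, $x_{\sigma_2}$ and the arc.

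Then I would invoke orthogonality of $\{\bt_\tau\}$ for the Hopf pairing on $\BS(\partial H)$, which by definition is the one given by the linked trees $T$ and $T'=o(T)$ of Figure~\ref{G1G2primeT}. Since $\bt_\sigma$ is placed on $T$ and $h^{\nu}\bt_\tau$ (times a unit) is the skein on $T'$, the evaluation equals $h^{\nu}$ times a unit times $((\bt_\sigma,\bt_\tau))$, and $((\bt_\sigma,\bt_\tau))$ vanishes unless $\sigma$ and $\tau$ correspond under $o$, i.e.\ unless $\sigma=o(\sigma_1\#_c\sigma_2)$; in that case $((\bt_\sigma,\bt_\sigma))\sim h^{k_0}$ with $k_0=0$ if $A(\sigma)$ is even and $k_0=1$ if $A(\sigma)$ is odd (again from (\ref{bt}), (\ref{bts}) with trunk color $0$). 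This already gives the vanishing claim; moreover, for fixed $\sigma$ there is at most one pair $(\sigma_1,\sigma_2)$ with $\sigma=o(\sigma_1\#_c\sigma_2)$ because $o$ and $\#_c$ are injective, so the matrix of $Z(\sigma)$ has a single possibly nonzero entry, at position $(\sigma_2,\sigma_1)$. Finally the total power $\mu=\nu+k_0$ of $h$ in $M_{\sigma_2,\sigma_1}$ is settled by a short parity case-check, using $A(\sigma)=A(\sigma_1)+A(\sigma_2)$ when $\sigma=o(\sigma_1\#_c\sigma_2)$ and the congruences $A(\sigma_i)\equiv c$ (resp.\ $c+1$) mod $2$ according as $\sigma_i$ is even (resp.\ odd): in the three cases other than ``$\sigma_1$ odd, $\sigma_2$ even'' one gets $\mu=0$, while in that case $\mu=1$, which is the assertion.

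The step I expect to be the main obstacle is the second one: extracting from the construction in \cite{GM2} the precise statement that the clasped-and-joined configuration of $G'_1$, $\varc'$ and $G'_2$ really represents the orthogonal basis vector $\bt_\tau$ up to the stated power of $h$ — in particular that the orthogonal lollipop basis is compatible with the connected sum along $\varc'$ and with the clasping encoded by $o$, including the correct matching of the stick-edge data and the factorization $x_\tau\sim x_{\sigma_1}\,x_{\sigma_2}\,(\text{arc})$. Granting that, the orthogonality argument, the extraction of matrix entries, and the parity bookkeeping are routine.
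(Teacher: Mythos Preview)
Your proposal is correct and follows essentially the same route as the paper's proof. Both compute the matrix entry $Z(\sigma)_{\sigma_2,\sigma_1}$ as the $S^3$-evaluation obtained by placing $\bt_\sigma$ in $H$, $\bt_{\sigma_1}$ in $\BH_1$, and $\bt^\#_{r(\sigma_2)}$ in $\BH_2$ joined by $\varc'(2c)$, then use the factorization $x_{\sigma_1}\cdot x_{\sigma_2}\cdot(\text{arc}) = x_{\sigma_1\#_c\sigma_2}$ together with the rescaling formulas~(\ref{bt}),~(\ref{bts}) and orthogonality for $\partial H$; the only cosmetic difference is that you pass through $\bt_\tau$ (picking up your $\nu+k_0$), whereas the paper passes directly through $\bt^\#_\tau$ (giving the single exponent $\lceil A(\tau)/2\rceil - \lfloor (A(\sigma_1)-c)/2\rfloor - \lceil (A(\sigma_2)+c)/2\rceil$), and these agree since $\lfloor A(\tau)/2\rfloor + k_0 = \lceil A(\tau)/2\rceil$. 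The step you flagged as the main obstacle---the factorization of $x_\tau$ under the connected-sum decomposition---is exactly what the paper uses (tersely) in the first displayed line of its computation, so your instinct is right that this is where the work from \cite{GM2} enters.
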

\begin{proof} The matrix entry $ Z(\sigma)_{\sigma_2,\sigma_1}$  
is, up to units, 
 the evaluation of the skein
in $S^3$ given by placing $\bt_{\sigma}$ in $H$, $\bt_{\sigma_1}$ in
$\BH_1$, and  $\bt^\#_{r(\sigma_2)}$ in $\BH_2$, with the last two
connected by $\varc' $ 
colored $2c$, and thus determining a coloring of the graph $T'$.  Note that every coloring $\sigma$ of $T$ is of the form $o (\sigma'_1
\#_{c'} \sigma'_2)$ for some $\sigma'_1$, $c'$, and $ \sigma'_2$. Using orthogonality for  
$\partial H$, we see that $ Z(\sigma)_{\sigma_2,\sigma_1}$ is  zero unless
$\sigma'_1= \sigma_1$, $c'=c$, and $\sigma'_2= \sigma_2$. This shows
that $ Z(\sigma)$ is either zero (if $c'\neq c$) or a scalar multiple of the
elementary matrix $E_{\sigma_2,\sigma_1}$ (if $c'=c$ and $\sigma =o (\sigma_1 \#_c
  \sigma_2)$). Assume we are in the latter case. Then the scalar
  multiple can be computed as follows.  Using  (\ref{bt}) and
  (\ref{bts}), and 
$A(\sigma_1\#_c\sigma_2)=A(\sigma_1)+ A(\sigma_2)$,
we have that
 \begin{align*} Z(\sigma)_{\sigma_2,\sigma_1}&=
  h^{- \lfloor (A(\sigma_1)-c)/2 \rfloor} h^{- \lceil (A( \sigma_2)+c)/2 \rceil}
 (( \bt_{ \sigma_1 \#_c \sigma_2}, x_{ \sigma_1 \#_c \sigma_2}))\\
 & = h^{- \lfloor (A(\sigma_1)-c)/2 \rfloor} h^{- \lceil (A(
   \sigma_2)+c)/2 \rceil}
h^{\lceil A(\sigma_1\#_c \sigma_2)/2 \rceil}
 (( \bt_{ \sigma_1 \#_c \sigma_2}, \bt^\#_{ \sigma_1 \#_c \sigma_2}))\\
& \sim h^{ - \lfloor (A(\sigma_1)-c)/2 \rfloor- \lceil (A(
  \sigma_2)+c)/2 \rceil + \lceil (A(\sigma_1)+A(\sigma_2))/2 \rceil }\\
& = 
\left\{\begin{array}{rl}h &\text{if $\sigma_1$ is odd and $\sigma_2$ is even} \\
1 &\text{otherwise}\\
\end{array}\right.
 \end{align*} 
This completes the proof of Lemma~\ref{5.6}.
\end{proof}

The proof of Theorem~\ref{mainth} 
in the case $p\equiv -1 \pmod 4$ (where $\BS(\Si)=\BS_p(\Si)$)
is now 
completed as follows.
The statement to be proved is that the image of the group algebra $\BO_p[\widetilde
  \Gamma_\Si^{++}]$ in $\End(\BS_p(\Si))$ is  
\begin{equation*}
\left[
\begin{array}{cc}
\End(\BS^{ev}_p(\Si))  &\multicolumn{1}{|c}{h \Hom(\BS^{odd}_p(\Si), \BS^{ev}_p(\Si))} \\ \hline
\Hom(\BS^{ev}_p(\Si), \BS^{odd}_p(\Si)) & \multicolumn{1}{|c}{ \End(\BS^{odd}_p(\Si))}
\end{array}\right] 
\end{equation*}
 This is very similar to what Lemma~\ref{5.6} says, except that
Lemma~\ref{5.6} 
expresses $Z(\sigma)$ in  
the orthogonal lollipop basis $\{\bt_\sigma\}$
of $\BS_p(\Si)$, 
whereas   
the definition of
$\BS^{ev}_p(\Si)$ and $\BS^{odd}_p(\Si)$ was in terms of the original
lollipop basis $\{\bb_\sigma\}$. But 
 the 
submodules  
$\BS^{ev}_p(\Si)$ and $\BS^{odd}_p(\Si)$ are the same
 no matter whether we use the original lollipop basis
or the orthogonal lollipop
basis. This is because  \cite[\S2]{GM2} there is a triangular basis
change between the two bases which respects the block summands given
by specifying the stick colors $2a_i$.
 Thus, Lemma \ref{esvl}  together with
Lemma~\ref{5.6} imply Theorem~\ref{mainth} in the case $p\equiv -1
\pmod 4$. 

We now consider the case $p\equiv 1 \pmod 4$. In this case, we need to
be more precise about the integer weights which we put on cobordisms
to resolve the framing anomaly. As mentioned in
\S\ref{sec3}, changing the
weight of a cobordism $(M,G)$ multiplies the induced endomorphism
$Z(M,G)$ by $\kappa$, where $\kappa$ is a unit in $\BO_p$. Thus, as
long as we were working with coefficients in $\BO_p$, we could (and
did) ignore
weights in the above proof. The problem in the case $p\equiv 1 \pmod
4$ is that only $\kappa^2$ lies in $\BO=\BZ[\zeta_p]$, but not
$\kappa$ itself.\footnote
{As already mentioned,  
$\kappa$ is a square root of
  $A^{-6-p(p+1)/2}$, so that $\kappa\in\BZ[\zeta_{p}]$  if and only if
  $p\equiv -1 \pmod 4$. This is why $\BO_p=\BZ[\zeta_{4p}]$ if $p\equiv 1 \pmod
4$. In both cases,  $\BO_p$ is the minimal
  ring containing $\zeta_p$ and $\kappa$.} Thus $Z(M,G)$ preserves the
lattice $\BS(\Si)$ if and only if the weight of $M$ satisfies a parity
condition. This condition is best formulated by saying that  $M$
together with its weight should lie in the {\em even 
cobordism category} defined in \cite{G}. 
If this is the case, we say that the weight of $M$ is of the correct
parity. 
We will not need the
precise formulation of this condition, which is non-trivial 
to state.  (It is {\em not}  
simply to 
require all weights to be even.) 
It will be enough for us to know that 
for every cobordism $M$ and integer $n$, exactly one of $n$ or $n+1$
is a weight of the correct
  parity for $M$. Also, 
for the cobordisms (\ref{act-alpha}) describing the representation of the
extended mapping class group $\widetilde\Gamma^{++}_\Si$ at the
beginning of our proof, it is
alright 
to take the weight to be zero.

Here is, then, how to modify the above proof of Theorem~\ref{mainth}
in the case $p\equiv 1 \pmod 4$ so that it holds for the lattice
$\BS(\Si)$ and not just for the lattice $\BS_p(\Si)$. At the beginning
of the proof, we equip the cobordisms (\ref{act-alpha}) with weight zero. In
Lemma~\ref{sv2}, we add the hypothesis that both $M$ and $M'$ are equipped
with a weight of the correct parity,  
  then the statement holds true for
$\BS(\Si)$ in place of $\BS_p(\Si)$. In Lemmas~\ref{esvs} and~\ref{esvl}, we
equip the cobordism $H_1\# H_2$ from $\Si$ to itself with a weight of
the correct parity, then both Lemmas hold true for the image of $\BO[\widetilde
  \Gamma_\Si^{++}]$ in $\End_\BO(\BS(\Si))$. We do not change anything
  in the computation in Lemma~\ref{5.6}, but notice that $ Z(\sigma)$
  is now defined more precisely (by the requirement on the weight of the cobordism $H_1\# H_2$
  to be of the correct parity).  Lemma~\ref{5.6} shows that the matrix
  of $ Z(\sigma)$
  is either zero or a scalar multiple of an elementary matrix
  $E_{\sigma_2,\sigma_1}$, and it also computes this scalar multiple up
  to a unit in $\BO_p=\BZ[\zeta_{4p}]$. But since we know that the
  matrix of $ Z(\sigma)$  
   has coefficients in $\BO=\BZ[\zeta_{p}]$, this
  actually computes the scalar multiple up to a unit in $\BO$. The
  remaining arguments in the proof are the same as before. 
This completes the proof of Theorem~\ref{mainth}.

\section{Proof of 
Theorem~\ref{new-Verl}, Theorem~\ref{deltaprop}, and Corollary~\ref{1.9}
}\lbl{dim}

\subsection{Recursion formulas for 
 $\fo_g^{(2c)}$ and $\fe_g^{(2c)}$.}
\lbl{dim.1} Recall that 
$$ \fo_g^{(2c)} = \dim_{\BF_p}(F^{odd}(\Si))\text{\ \ \ and \ \ } 
\fe_g^{(2c)} = \dim_{\BF_p}(F(\Si)\slash F^{odd}(\Si))$$
(where $\Si=\Si_g(2c)$)   
are given by the number of odd or even colorings of the lollipop
tree  
$T_g^{(2c)}$ 
shown in Figure \ref{lol}.  
In  
a coloring of a lollipop tree, all the colors except the
loop colors must be even numbers $2a$ with $0\leq a\leq d-1$. 
Recall \cite{GM} that a stick edge is an edge which meets a loop. 
We will call
a trivalent vertex at the opposite end of a stick edge from the loop a stick vertex.  
 We say a coloring is {\em balanced} at a 
stick vertex 
 if the three edges which meet at the stick vertex 
 are colored 
 $2a$, $2b$, and $2c$ 
with 
$a+b+c \equiv 0 \pmod{2}.$  
Otherwise 
the coloring is called {\em unbalanced} at the stick vertex.  
 It is
easy to see 
from Definition~\ref{eoc} 
that a coloring  
is even if and only if the number of stick vertices where the coloring is unbalanced is even. 

As a building block, we consider first the lollipop tree
$T_{1}^{(2c_1,2c_2)}$ associated to a surface of genus one with two colored
points colored $2c_1$ and $2c_2$
shown in Figure~\ref{ltg1}.
Note  
that colorings of $T_{1}^{(0,2c)}$ are the same as colorings of
$T_{1}^{(2c)}$, 
as an arc which is colored zero can be erased. 
Let $\be(c_1,c_2)$ denote the number of colorings of $T_{1}^{(2c_1,2c_2)}$ that are balanced at the stick vertex and $\up(c_1,c_2)$ denote the number of colorings of $T_{1}^{(2c_1,2c_2)}$ that are unbalanced  at the stick vertex.

\begin{figure}[h]
\includegraphics[width=1.5in]{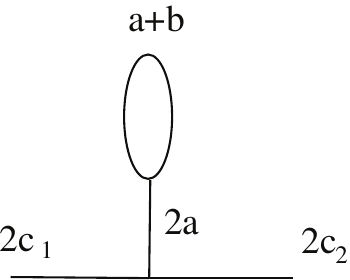}  
\caption{}\lbl{ltg1}
\end{figure}

\begin{lem}\lbl{lem71} If $c_1 \ge c_2$, then $\be(c_1,c_2)= (c_2+1)(d-c_1)$, and 
$\up(c_1,c_2)= c_2(d-c_1).$ 
\end{lem}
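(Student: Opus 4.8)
The plan is to enumerate the small admissible colorings of $T_1^{(2c_1,2c_2)}$ directly, organizing the count by the color $2a$ of the stick edge. Recall the shape of $T_1^{(2c_1,2c_2)}$ (Figure~\ref{ltg1}): it consists of a single loop joined by a stick edge to the stick vertex, which in turn carries the two legs colored $2c_1$ and $2c_2$. A coloring is therefore determined by the pair consisting of the stick‑edge color $2a$ (with $a=a_1$, necessarily even) and the loop color $a_1+b_1$, the only color on the tree allowed to be odd.

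First I would pin down which values of $a$ occur. Admissibility at the stick vertex, for $c_1\ge c_2$ fixed, says exactly that $c_1-c_2\le a\le c_1+c_2$ and $a+c_1+c_2\le p-2$; admissibility at the loop vertex together with the smallness condition $0\le a_1+b_1\le d-1$ forces the loop color to lie in $\{a,a+1,\dots,d-1\}$, so that for each admissible $a$ there are precisely $d-a$ choices of loop color. Setting $M=\min(c_1+c_2,\ p-2-c_1-c_2)$, one checks $M\le d-1$ in all cases, so the admissible values of $a$ are exactly $a\in\{c_1-c_2,c_1-c_2+1,\dots,M\}$, each accounting for $d-a$ colorings.

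Next I would split by the parity of $a$: the coloring is balanced at the stick vertex precisely when $a\equiv c_1+c_2\pmod 2$. Since $c_1-c_2\equiv c_1+c_2\pmod 2$, the balanced (resp. unbalanced) values of $a$ in the range are $c_1-c_2,\,c_1-c_2+2,\dots$ (resp. $c_1-c_2+1,\,c_1-c_2+3,\dots$), and $\be(c_1,c_2)$, $\up(c_1,c_2)$ are then the sums of the arithmetic progression $a\mapsto d-a$ over these two subsets. A minor case split finishes it: if $c_1+c_2\le d-1$ then $M=c_1+c_2$, the range has $2c_2+1$ terms of which $c_2+1$ are balanced, and the two arithmetic sums evaluate to $(c_2+1)(d-c_1)$ and $c_2(d-c_1)$; if instead $c_1+c_2\ge d$ then $M=p-2-c_1-c_2$, the range has $2(d-c_1)$ terms of which $d-c_1$ are balanced, and the (superficially different) sums collapse to the same two closed forms.

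I expect the only real work to be bookkeeping: getting the interval of admissible $a$ exactly right — in particular noticing that for $c_1+c_2\ge d$ the binding upper bound switches from $c_1+c_2$ to $p-2-c_1-c_2$ — and then verifying that both arithmetic sums nevertheless reduce to $(c_2+1)(d-c_1)$ and $c_2(d-c_1)$. There is no conceptual difficulty; one simply needs the conventions of \cite{GM} for which colors on a lollipop tree must be even, and for the count $d-a$ of loop colors above a stick edge colored $2a$.
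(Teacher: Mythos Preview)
Your proposal is correct and follows essentially the same approach as the paper: count $d-a$ loop colors for each admissible stick half-color $a$, determine the admissible range $c_1-c_2\le a\le \min\{c_1+c_2,\,p-2-c_1-c_2\}$, split by the parity of $a$ relative to $c_1+c_2$, and verify that in both cases for the upper bound the arithmetic sums collapse to $(c_2+1)(d-c_1)$ and $c_2(d-c_1)$. The paper is terser (it simply asserts that both sums evaluate to the same closed form), but the argument is the same.
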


\begin{proof} Small admissibility at the top of the stick edge says simply that $0 \le b \le d-a -1.$
Admissibility at the bottom of the stick edge says 
\begin{equation*} c_1-c_2 \le a \le 
\min 
\{c_1+c_2,2d-1-c_1-c_2\}~. 
\end{equation*}

Thus  $\be(c_1,c_2)$ is either   
$$\sum_
{ 
\genfrac {}{}{0pt}{}{c_1 - c_2 \le  a \le c_1 + c_2}
{ a\equiv c_1-c_2 \pmod 2 }
} 
  (d-a) \text{\ \ \ \ \ \ or \ \ \ } \sum_
{
\genfrac {}{}{0pt}{}{c_1 - c_2 \le
   a \le 2d-1 -c_1 -c_2 }
  {a\equiv c_1-c_2 \pmod 2 }
  }
(d-a) 
$$ 
depending on  which upper limit for $a$ is smaller. But it turns
out that both these sums sum to $(c_2+1)(d-c_1)$.
The unbalanced case is done similarly.
\end{proof}

\begin{prop} The numbers $\fe_g^{(2c)}$ and $\fo_g^{(2c)}$  satisfy
  the following recursion formulas:
\begin{align} \lbl{f16}\fe_{1}^{(2c)}&=\delta_1^{(2c)} 
=D_{1}^{(2c)}
= d-c \text{ \  and\ } 
    \fo_{1}^{(2c)}=0\\
\lbl{f-ev} \fe_{g+1}^{(2c)}&= \sum_{a=0}^{d-1}(\fe_{g}^{(2a)}\be(a,c) +
\fo_{g}^{(2a)}\up(a,c))\\
\lbl{f-odd}\fo_{g+1}^{(2c)}&= \sum_{a=0}^{d-1}(\fo_{g}^{(2a)}\be(a,c) +
\fe_{g}^{(2a)}\up(a,c))
\end{align}
\end{prop}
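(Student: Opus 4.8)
The plan is to reduce everything to the genus-one building block $T_1^{(2c_1,2c_2)}$ of Figure~\ref{ltg1} and to the parity criterion recalled just before Lemma~\ref{lem71}: a coloring is even if and only if the number of stick vertices at which it is unbalanced is even. First I would dispose of the base case $g=1$. Since colorings of $T_1^{(2c)}$ coincide with colorings of $T_1^{(0,2c)}$ (erase the $0$-colored arc), and $T_1^{(2c)}$ has exactly one stick vertex, a coloring is even iff it is balanced there and odd iff it is unbalanced there. Hence $\fe_1^{(2c)}=\be(c,0)$ and $\fo_1^{(2c)}=\up(c,0)$, and Lemma~\ref{lem71} applied with $c_1=c\geq 0=c_2$ gives $\be(c,0)=(0+1)(d-c)=d-c$ and $\up(c,0)=0\cdot(d-c)=0$. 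This is (\ref{f16}); moreover $\delta_1^{(2c)}=\fe_1^{(2c)}-\fo_1^{(2c)}=d-c$, which also equals $D_1^{(2c)}=\fe_1^{(2c)}+\fo_1^{(2c)}$.

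For the recursion I would cut a colored $T_{g+1}^{(2c)}$ along the interior edge $\varepsilon$ joining the trunk-adjacent stick vertex $v_0$ to the rest of the tree. The three edges meeting $v_0$ are the trunk (color $2c$), the stick edge of the first loop (color $2a_1$), and $\varepsilon$ (color $2a$, say). Cutting at $\varepsilon$ displays the first loop together with its stick edge and the vertex $v_0$ as a copy of $T_1^{(2c,2a)}$ (marked-point colors $2c$ and $2a$), and the remaining $g$ loops as $T_g^{(2a)}$ rooted at $\varepsilon$. I would then verify that this assignment is a bijection from the set of colorings of $T_{g+1}^{(2c)}$ onto $\bigsqcup_{a=0}^{d-1}\bigl(\text{colorings of }T_1^{(2c,2a)}\bigr)\times\bigl(\text{colorings of }T_g^{(2a)}\bigr)$: the two pieces share the edge $\varepsilon$ but no vertex, so the admissibility conditions at the vertices and the small-ness conditions at the loops of $T_{g+1}^{(2c)}$ are exactly the union of those of the two pieces, and $a$ ranges over $\{0,\dots,d-1\}$ because $\varepsilon$ is an interior edge.

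The last step is parity bookkeeping. The stick vertices of $T_{g+1}^{(2c)}$ are $\{v_0\}$ together with the stick vertices of the $T_g^{(2a)}$-piece, so the number of unbalanced stick vertices of a coloring equals ($1$ or $0$, according as it is unbalanced or balanced at $v_0$) plus the number of unbalanced stick vertices of its restriction to $T_g^{(2a)}$. Hence the coloring is even iff it is balanced at $v_0$ and even on $T_g^{(2a)}$, or unbalanced at $v_0$ and odd on $T_g^{(2a)}$; and odd in the two complementary cases. Summing over $a$ and over the colorings of each factor, and using the symmetry $\be(a,c)=\be(c,a)$ and $\up(a,c)=\up(c,a)$ built into the definitions, yields (\ref{f-ev}) and (\ref{f-odd}).

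I expect the one genuinely delicate point to be the bijection in the recursion step: one must check that the building block $T_1^{(2c,2a)}$ appearing in the formulas really is the piece obtained by severing the trunk-adjacent lollipop, that $v_0$ is precisely its stick vertex, and that cutting $\varepsilon$ neither loses nor double-counts any admissibility or small-ness constraint. It is also worth recording the minor consistency check that the recursion reproduces the footnote convention in Definition~\ref{eoc} when $(g,c)=(2,0)$: formula (\ref{f-odd}) gives $\fo_2^{(0)}=\sum_a \fe_1^{(2a)}\up(a,0)=0$, in agreement with the footnote's declaration that all colorings of $T_2^{(0)}$ are even.
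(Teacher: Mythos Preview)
Your argument is correct and is precisely the approach the paper intends: the paper's own proof consists of the single sentence ``This follows from the discussion preceding Lemma~\ref{lem71},'' and you have carefully spelled out that discussion---identifying $T_1^{(2c)}$ with $T_1^{(0,2c)}$, cutting $T_{g+1}^{(2c)}$ into $T_1^{(2c,2a)}\sqcup T_g^{(2a)}$ along the first spine edge, and tracking parity via the unbalanced-stick-vertex criterion. The bijection check, the parity bookkeeping, and the $(g,c)=(2,0)$ consistency check are all fine; the symmetry $\be(a,c)=\be(c,a)$, $\up(a,c)=\up(c,a)$ you invoke is immediate from the definition of $T_1^{(2c_1,2c_2)}$.
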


\begin{proof}This follows from 
 the discussion preceding Lemma~\ref{lem71}.
\end{proof}

Of course the numbers $D_g^{(2c)}= \fe_g^{(2c)} +
\fo_g^{(2c)}$ and $\delta_g^{(2c)}= \fe_g^{(2c)} -
\fo_g^{(2c)}$ can also be computed recursively from
(\ref{f16})--(\ref{f-odd}). For later use, we remark that
\begin{equation} \lbl{recur1}
\delta_{g+1}^{(2c)}= \sum_{a=0}^{d-1}(\be(a,c)
-\up(a,c))\,\delta_{g}^{(2a)}= \sum_{a=0}^{d-1} 
(d- \max\{a,c\})\,\delta_{g}^{(2a)}~. 
\end{equation} 

It is not hard to get explicit formulas  for low genus in this way, 
such as:
\begin{align} 
\lbl{fex}\fe_2^{(2c)}&= 
 ( (c+1) p^3-(3c^2+3c) p^2+ (2 c^3-3 c-1)  p + 2 c^3+3 c^2+c )/24 \\
\lbl{fox}\fo_2^{(2c)}&= 
      ( c p^3 -(3c^2+3c) p^2+ (2 c^3+6c^2+3c)p -(2c^3+3c^2+c) )/24 \\
\lbl{fo}
\delta_2^{(2c)}&=  
       ( p^3 - (6c^2 +6c+1)p       + 4 c^3 +6 c^2 +2 c)/24\\
 \fo_3 &= {(p-3) (p-2) (p-1)^2 p (p+1)}/{2880}\\
\fe_3 &= (p-1) p (p+1)^2 (p+2) (p+3)/2880 \lbl{eo}\\
\delta_3&=  (p-1) p (p+1) (p^2+1)/240\lbl{f23}\\
\delta_4&= (p - 1) p (p + 1) (17 p^4  + 31 p^2  + 24)/40320\lbl{f24} \\
\delta_5&= (p - 1) p (p + 1) (31 p^6 + 82 p^4 + 103 p^2  + 72)/725760
\lbl{f25} \ .
\end{align}

\subsection{Proof of Theorem~\ref{new-Verl}.}  
\lbl{dim.2}
We must prove  
formula  (\ref{d-Verl}) for $\delta_g^{(2c)}$. 
To this end, 
let us reformulate the recursion relation
(\ref{recur1}) in terms of the
Verlinde algebra, as follows. The Verlinde algebra $V_p$ for the
$\SO(3)$-TQFT at the  
$p$-th
root of unity (where $p=2d+1$)  
\cite{BHMV1,BHMV2} is 
the quotient algebra   
$$V_p=K[z]/(e_{d}-e_{d-1})$$ where $K=\BQ(\zeta_p)$ is the cyclotomic field
and 
where $e_i=e_{i}(z)$ is  
the
Chebyshev polynomial of
the second kind 
defined recursively by $e_0=1, e_1(z)=z$, and
$e_{n+1}(z)=ze_n(z) -e_{n-1}(z)$. In $V_p$, one has
\begin{equation}\lbl{even-odd} e_{d+i}=e_{d-1-i}~,\end{equation} so
that the linear basis $\{e_0, e_1, \ldots,
e_{d-1}\}$ of $V_p$ is the same, up to reordering, as the basis $\{e_0, e_2, \ldots,
e_{2d-2}\}$. If $x\in V_p$, we denote by $M(x)=(M(x)_{ji})$ the
$d\times d$ matrix describing 
multiplication by $x$ in the basis $\{e_0, e_2, \ldots,
e_{2d-2}\}$. Here, we index the matrix entries from $0$ to $d-1$ so that
$$x\, e_{2i}=\sum_{j=0}^{d-1} M(x)_{ji}e_{2j}~.$$ 

The following result identifies
$(-1)^c  \delta_g^{(2c)}$ as a matrix coefficient.

\begin{prop}\lbl{6.7}  Define   
$\hKK\in V_p$ by
$$\hKK=
\sum_{n=0}^{d-1}(-1)^n (d-n)e_{2n}~.$$ Then for $0\leq c\leq d-1$, we have $$(-1)^c \delta_g^{(2c)}=M(\hKK^g)_{c,0}~.$$
\end{prop}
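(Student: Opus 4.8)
The plan is to recast the statement as an identity in the Verlinde algebra $V_p$ and prove it by induction on $g$. Since $e_0=1$, the matrix coefficient $M(\hKK^g)_{c,0}$ is simply the coefficient of $e_{2c}$ when $\hKK^g$ is written in the basis $\{e_0,e_2,\dots,e_{2d-2}\}$, so it is enough to prove that in $V_p$
\begin{equation*}
\hKK^{g}=\sum_{c=0}^{d-1}(-1)^{c}\,\delta_{g}^{(2c)}\,e_{2c}~.
\end{equation*}
For $g=1$ this is nothing but the definition of $\hKK$ together with $\delta_{1}^{(2c)}=d-c$, which is formula (\ref{f16}). For the inductive step I would multiply the inductive hypothesis by $\hKK$ and invoke the following formula for the matrix of multiplication by $\hKK$:
\begin{equation*}
M(\hKK)_{c,a}=(-1)^{c-a}\bigl(d-\max\{a,c\}\bigr)\qquad(0\le a,c\le d-1)~.
\end{equation*}
Granting this, one computes
\begin{align*}
\hKK^{g+1}&=\sum_{a}(-1)^{a}\delta_{g}^{(2a)}\,\hKK\,e_{2a}\\
&=\sum_{c}(-1)^{c}\Bigl(\sum_{a}\bigl(d-\max\{a,c\}\bigr)\,\delta_{g}^{(2a)}\Bigr)e_{2c}\\
&=\sum_{c}(-1)^{c}\delta_{g+1}^{(2c)}e_{2c}~,
\end{align*}
the last equality being the recursion (\ref{recur1}). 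This closes the induction, so the whole proof reduces to the displayed formula for $M(\hKK)_{c,a}$.

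To establish that formula I would expand $\hKK\,e_{2a}=\sum_{n=0}^{d-1}(-1)^{n}(d-n)\,e_{2n}e_{2a}$ and use the fusion rule for the even generators of $V_p$, which follows from the defining relation and (\ref{even-odd}): in $V_p$ one has $e_{2n}e_{2a}=\sum_{m}e_{2m}$, the sum running over $|n-a|\le m\le\min\{n+a,\,2d-1-n-a\}$, all coefficients being $1$. Since the condition $|n-a|\le c\le\min\{n+a,2d-1-n-a\}$ is symmetric in $n,a,c$, this gives
\begin{equation*}
M(\hKK)_{c,a}=\sum_{|a-c|\,\le\, n\,\le\,\min\{a+c,\,2d-1-a-c\}}(-1)^{n}(d-n)~.
\end{equation*}
Now the range of summation here is precisely the admissibility range for the stick half-color $n$ of the genus-one two-point block $T_{1}^{(2a,2c)}$ in the proof of Lemma~\ref{lem71}, and a coloring with stick half-color $n$ contributes the factor $d-n$ (the number of admissible values of the loop color) and is balanced exactly when $n\equiv a+c\pmod 2$. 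Hence the sum above equals $(-1)^{a+c}\bigl(\be(a,c)-\up(a,c)\bigr)$, which by Lemma~\ref{lem71} is $(-1)^{a+c}\bigl(d-\max\{a,c\}\bigr)=(-1)^{c-a}\bigl(d-\max\{a,c\}\bigr)$. This proves the formula for $M(\hKK)_{c,a}$ and hence Proposition~\ref{6.7}.

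The step I expect to need the most care is the explicit fusion rule in $V_p$: the naive product $e_{2n}e_{2a}=\sum_{l}e_{2l}$ valid in $K[z]$ has to be rewritten in the basis $\{e_{0},\dots,e_{2d-2}\}$ by repeatedly applying the reflection relation (\ref{even-odd}), $e_{d+i}=e_{d-1-i}$ (and $e_{-1}=0$), and one must check that the contributions of indices outside $\{0,\dots,d-1\}$ either vanish or cancel, leaving exactly the stated range --- in particular that the cutoff $n+a+c\le 2d-1$ matches the admissibility bound $i+j+k\le 2p-4$ entering the coloring count of \S\ref{dim.1}. This is a routine but slightly delicate sign bookkeeping; once it is in place, everything else is formal or is the elementary summation already performed in Lemma~\ref{lem71}. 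As a sanity check, for $p=5$ one has $V_5=K[z]/(z^{2}-z-1)$ and $\hKK=2-z$, and $\hKK^{2}=5-3z$ reproduces $\delta_{2}^{(0)}=5$, $\delta_{2}^{(2)}=3$, in agreement with (\ref{fo}).
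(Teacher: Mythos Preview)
Your proof is correct and follows essentially the same route as the paper's: recast the claim as $\hKK^{g}=\sum_{c}(-1)^{c}\delta_{g}^{(2c)}e_{2c}$, induct on $g$, and reduce the induction step via the recursion (\ref{recur1}) to the identity $M(\hKK)_{c,a}=(-1)^{a+c}(d-\max\{a,c\})$. The only difference is that the paper simply asserts this last identity is ``easily checked'' from the fact that the structure constants of $V_p$ encode the admissibility conditions, whereas you spell out that verification explicitly by invoking the fusion rule and Lemma~\ref{lem71}; your concern about the reflection bookkeeping in the fusion rule is exactly the standard fact the paper takes for granted.
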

\begin{proof} Since $e_0$ is the identity element of the algebra
  $V_p$, we have $\hKK^ge_0=\hKK^g$ and it is enough to show that
  \begin{equation}\lbl{hK} \hKK^g=\sum_{c=0}^{d-1}(-1)^c \delta_g^{(2c)}
  e_{2c}~.
\end{equation}
We prove (\ref{hK}) by induction on $g$. The formula certainly holds for  $g=0, 1$ since
 $\delta_0^{(0)}=1$, $\delta_0^{(2c)}=0$ ($c \ne 0$) and 
  $\delta_1^{(2c)}=\fe_1^{(2c)}=d-c$. The induction step follows from
  our recursion formula (\ref{recur1}), since the matrix coefficients of $M(\hKK)$ are given
by $$M(\hKK)_{ji}=(-1)^{i+j}(d-\max\{i,j\})~, $$ 
as is easily checked using the fact that the structure
constants of the Verlinde algebra in the basis $\{e_0, e_2, \ldots,
e_{2d-2}\}$ encode the admissibility conditions at the trivalent vertices of a
colored graph. \end{proof}

The 
original Verlinde numbers $D_g^{(2c)}$ can also be described as matrix
coefficients:  put 
$$ \KK=\sum_{n=0}^{d-1}(d-n)e_{2n} = \sum_{n=0}^{d-1} e_{2n}^2~,$$ 
then 
\begin{equation}\lbl{kkf} 
D_g^{(2c)}=M(\KK^g)_{c,0}~.
\end{equation}
Moreover, it is well-known how to diagonalize the matrix $M(\KK)$
using the so-called $S$-matrix, and this
diagonalization  gives a proof of the Verlinde formula
(\ref{Verl}) (see Remark~\ref{PrV} below). 
We now apply the same method to the matrix $M(\hKK)$. 

Here is a formulation of the basic
diagonalization result. Let us write $q=\zeta_p$. Let $S=(S_{ij})$ be the
$d\times d$ matrix
  defined by $$S_{ij}=q^{(2i+1)(2j+1)}-q^{-(2i+1)(2j+1)} \ \ \ \ \ \ \ 
   (i,j=0,1, \ldots, d-1)~.$$ This matrix is $q-q^{-1}$ times the $S$-matrix of
   the $SO(3)$-TQFT, and one has
\begin{equation*}\lbl{Sm} S^{-1}=- \textstyle {\frac{1}{p}} S
\end{equation*}  (this last statement can also be checked by
   direct computation). 

\begin{lem} Let $Q=\diag(-q^{2j+1}-q^{-2j-1})_{j=0,1,\ldots,d-1}$. Then 
\begin{equation}\lbl{MZ} 
M(z)= S Q S^{-1}= - \textstyle {\frac{1}{p}} S Q S 
\end{equation} 
\end{lem}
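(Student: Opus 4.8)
The plan is to verify the asserted diagonalization $M(z) = SQS^{-1}$ by direct computation, exploiting the fact that multiplication by $z$ in the Verlinde algebra acts on the basis $\{e_0, e_2, \ldots, e_{2d-2}\}$ by the tridiagonal recursion $z\,e_n = e_{n+1} + e_{n-1}$, together with the reflection relation $e_{d+i} = e_{d-1-i}$ from~(\ref{even-odd}). First I would observe that it suffices to show that each column of $S$ is an eigenvector of $M(z)$ with the eigenvalue recorded in $Q$; that is, for each fixed $j$, $\sum_i S_{ij}\,e_{2i}$ is an eigenvector of multiplication by $z$ with eigenvalue $-q^{2j+1} - q^{-2j-1}$. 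This is the standard observation that the characters of the Verlinde algebra are indexed by the entries of the $S$-matrix.

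Concretely, I would consider, for fixed $j$, the element $\xi_j = \sum_{i=0}^{d-1} S_{ij}\, e_{2i}$ with $S_{ij} = q^{(2i+1)(2j+1)} - q^{-(2i+1)(2j+1)}$, and compute $z\cdot \xi_j$. Writing $\theta = (2j+1)\pi/p$ so that $q^{2j+1} = e^{2\pi i (2j+1)/(2p)}$, the coefficient $S_{ij}$ is (up to a constant) $\sin((2i+1)(2j+1)\pi/p)$. Using $z\,e_{2i} = e_{2i+1} + e_{2i-1}$ — and reindexing in terms of the even-labelled basis via $e_{d+m} = e_{d-1-m}$ when odd subscripts or subscripts $\geq d$ appear — one finds that $z\cdot\xi_j = \mu_j\,\xi_j$ where $\mu_j = q^{2j+1} + q^{-2j-1}$ times $(-1)$ coming from the sign $A = -\zeta_p^{d+1}$ normalization, i.e. $\mu_j = -q^{2j+1} - q^{-2j-1}$. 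The boundary terms at $i=0$ and $i = d-1$ must be checked separately: at $i=0$ one uses $e_{-1} = 0$ (equivalently $e_{p-2} = -e_{-1}$-type reflection degenerates), and at $i = d-1$ one uses $e_{2d-1} = e_{d+(d-1)} = e_{d-1-(d-1)} = e_0$, i.e. the relation $e_d = e_{d-1}$ defining the quotient. These boundary identities are exactly what forces the eigenvalue to come out clean, and they are the reason the reflection relation~(\ref{even-odd}) is needed.

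Once the columns of $S$ are shown to be eigenvectors with the stated eigenvalues, the relation $M(z) = SQS^{-1}$ follows provided $S$ is invertible, and its inverse is given by $S^{-1} = -\frac{1}{p}S$, which the paper has already recorded (and which is a routine Gauss-sum / discrete orthogonality computation: $\sum_i S_{ij}S_{ik}$ is a sum of four geometric series in roots of unity that collapses to $-p\,\delta_{jk}$). So $M(z) = SQS^{-1} = -\frac{1}{p}SQS$, which is exactly~(\ref{MZ}).

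The main obstacle I expect is bookkeeping the boundary contributions in the recursion correctly — making sure the reflection $e_{d+i} = e_{d-1-i}$ is applied consistently to both the odd-subscript intermediate terms $e_{2i\pm 1}$ and to any subscripts that stray outside $\{0,\ldots,2d-2\}$, and tracking the sign conventions coming from $A = -\zeta_p^{d+1}$ (so that $q = \zeta_p = A^2$ and the relevant eigenvalue involves $-q^{2j+1}-q^{-2j-1}$ rather than $+q^{2j+1}+q^{-2j-1}$). Everything else is routine trigonometric/geometric-series manipulation. An alternative, cleaner route that avoids some of this is to note that the $e_{2i}$ are, up to the quotient relation, Chebyshev polynomials evaluated at $z$, so that $\xi_j$ is (a constant multiple of) the Dirichlet-kernel-type sum that is classically known to diagonalize the multiplication operator; I would mention this as the conceptual reason the computation works, but carry out the direct verification for self-containedness.
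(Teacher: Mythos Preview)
Your proposal is correct and follows essentially the same approach as the paper: you show that the columns of $S$ are eigenvectors of multiplication by $z$ with the stated eigenvalues, which is exactly what the paper does. The only difference is that the paper deduces this from the eigenvectors $v_j$ already exhibited in \cite{BHMV2} (checking that $v_{2j}$ equals $(q-q^{-1})^{-1}\sum_i S_{ij}e_{2i}$), whereas you carry out the direct verification using the Chebyshev recursion and the reflection relation $e_{d+i}=e_{d-1-i}$; the paper explicitly notes that such a direct computation is possible.

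One small correction: the minus sign in the eigenvalue $-q^{2j+1}-q^{-2j-1}$ does not come from the convention $A=-\zeta_p^{d+1}$ as you suggest. It arises purely from the reflection/reindexing when you convert the odd-indexed terms $e_{2i\pm 1}$ back to even-indexed ones via $e_n=e_{2d-1-n}$ (together with $\omega^{2d+1}=1$ for $\omega=q^{2j+1}$). The skein variable $A$ plays no role in this lemma, which is a statement about the abstract Verlinde algebra $K[z]/(e_d-e_{d-1})$.
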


\begin{proof}  
This statement can also be checked by a direct computation ({\em cf.}
\cite{GMGow}). 
Here is how to deduce it
  from results in \cite{BHMV2}. On page 913 of that paper, one finds elements
  $v_j\in V_p$ which are eigenvectors for the multiplication by $z$: 
   $$zv_j=-(q^{j+1}+q^{-j-1})v_j~.$$ It is observed there that $v_0,
  v_1, \ldots v_{d-1}$ form a basis of $V_p$, but using
  (\ref{even-odd}), one sees that this is the same, up to reordering,
  as $v_0,
  v_2, \ldots v_{2d-2}$, which is therefore also a basis.  The matrix $Q$ is the matrix of
  multiplication by $z$ in this latter basis. Moreover, using
  (\ref{even-odd}) again, one can check that in our notation  $v_{2j}$
  is given by
$$ v_{2j}= \frac{1}{q-q^{-1}}\sum_{i=0}^{d-1} S_{ij} e_{2i}~.$$ This proves the lemma.
\end{proof}

Let $G_j$ denote the Galois automorphism of the cyclotomic field
$Q(q)$ 
(recall $q=\zeta_p$ is a primitive $p$-th root of unity)
defined by $G_j(q)=q^j$ ($j=1,\ldots,p-1$). Thinking of  $\hKK$ as a
polynomial in $z$, we see from (\ref{MZ}) that the eigenvalues  of
$M(\hKK)$ are given by $G_{2j+1}(\Lambda)$ ($j=0,\ldots,d-1$), where
\[ 
\Lambda = \hKK(z)\Big\vert_{z=-q-q^{-1}}=\sum_{n=0}^{d-1}(-1)^n (d-n)
  \frac{q^{2n+1}-q^{-2n-1}}{q-q^{-1}}=
  \frac 1 {(q+q^{-1})^2}.
  \]
(To see the last equality, multiply both sides by
$(q-q^{-1})(q+q^{-1})^2$ and compare the
coefficients of the resulting polynomials.)
 
Thus (\ref{MZ}) gives 
\begin{align*}\notag M(\hKK^g)_{c,0}&=-
  \frac{1}{p} \sum_{j=0}^{d-1} S_{cj}G_{2j+1}(\Lambda^g)S_{j0}\\
&=  -
  \frac{1}{p} \sum_{j=0}^{d-1} G_{2j+1}
  \left((q^{2c+1}-q^{-2c-1})(q-q^{-1})\Lambda^g\right)\\
&=  -
  \frac{1}{p} \sum_{j=1}^{d} G_{j} \left((q^{2c+1}-q^{-2c-1})(q-q^{-1})\Lambda^g\right)
\end{align*}
The last equality is justified by the fact that both
$(q^{2c+1}-q^{-2c-1})(q-q^{-1})$ and $\Lambda$ lie in the real
subfield $\BQ(q+q^{-1})$ of the cyclotomic field $\BQ(q)$. 

Substituting now $q=-e^{\pi i/p}$ (which is a primitive $p$-th root of
unity), 
 and using Proposition~\ref{6.7}, 
we get the expression  (\ref{d-Verl}) 
for $\delta_g^{(2c)}$ 
in Theorem~\ref{new-Verl}.

\begin{rem}{\em In \cite[Eq.~(5)]{GMGow}, we wrote down a different
    formula
     for $\delta_g^{(2c)}$ which was based on the following
    expression for $ \Lambda $ as a polynomial in $q=\zeta_p$:
$$ \Lambda = \lceil  d/2 \rceil + \displaystyle{\sum_{k=1}^{d-1}} (-1)^k  \lceil
(d-k)/2 \rceil (q^{2k}+q^{-2k})~.$$ 
We thank Don Zagier for 
pointing out
 that this is equal to $1/(q+q^{-1})^2$, which leads to the simpler
formula for $\delta_g^{(2c)}$ given in Theorem~\ref{new-Verl}.}\end{rem}

\begin{rem}\lbl{PrV}{\em In the same way, one gets a proof of the
    Verlinde Formula (\ref{Verl}) by expressing $D_g^{(2c)}$ as a
    coefficient of the matrix  $M(\KK)$ as in (\ref{kkf}) and computing
    the eigenvalues of this matrix. The computation is exactly the same,
    except that $\Lambda$ must be replaced with
    $$\KK(z)\Big\vert_{z=-q-q^{-1}}=\frac {-p}{(q-q^{-1})^2}$$ (see
    \cite[p.~913]{BHMV2}). 
}\end{rem}

\subsection{Proof of Theorem~\ref{deltaprop}.} 
\lbl{dim.3}
Observe 
that
$\delta_1^{(2c)}=d-c$ 
is a polynomial in $p=2d+1$ and $c$ of total degree one, and 
\begin{equation}
  \lbl{recur}\delta_{g+1}^{(2c)}=\sum_{a=0}^{d-1}(d-a)\,\delta_{g}^{(2a)} 
+\sum_{a=0}^{
c-1}(a-c)\,\delta_{g}^{(2a)}~, 
\end{equation} 
 as follows easily from (\ref{recur1}).
Using the well-known formula 
\begin{equation} \lbl{Faul}\sum_{n=0}^{N-1} n^k= \frac{N^{k+1}}{k+1} +
  O(N^k)
\end{equation} (where $O(N^k)$
is a polynomial in $N$ of degree at most $k$), one 
sees
by
induction on $g$ that 
$\delta_g^{(2c)}$ is a 
polynomial of total degree at most $2g-1$ in $p$ and $c$. Let $L_g$
the homogeneous part of degree $2g-1$ in $\delta_g^{(2c)}$.
We must show that

\begin{equation}\lbl{LG1} 
L_g=(-1)^{g-1} \sum_{k=1}^{2g} \,2(2^k-1)\, \frac{B_k}{k!}\,
\frac{c^{2g-k}}{(2g-k)!}\, p^{k-1}~.
\end{equation}

This will also be proved by induction. It is certainly
true for $g=1$. To perform the induction step, assume it is true for
$g$. Using (\ref{recur}) and (\ref{Faul}), it is easy to see
that a monomial
$c^{2g-k}p^{k-1}$ in $L_g$ gives rise to a term 
\begin{equation}\lbl{rrr} \frac{-c^{2g+2-k}p^{k-1} + 2^{k-2g-2} p^{2g+1}} {(2g-k+1)(2g-k+2)}\end{equation}
in $L_{g+1}$. We must show that the coefficient of $c^{2g+2-k}p^{k-1}$
in $L_{g+1}$ is given by (\ref{LG1}) with
$g+1$ in place of $g$. This is immediate  from (\ref{rrr}) for
$k<2g+2$, while for $k=2g+2$ it follows from the identity
\begin{equation}\lbl{bern} \sum_{k=0}^{2g+2}2^k(2^k-1) \binom{2g+2}{k}
  B_k = 0~. \end{equation} The identity (\ref{bern}) follows from
Exercises 12, 17, 19, 21, and 22, in
\cite[Ch.~15]{IR}.\footnote{Warning: there is a factor $2^{n-1}$
  (resp. $a^{n-1}$) missing in the 
  statements of Exercises 21 (resp. 22).}

\subsection{A residue formula for $D_g^{(2c)}$.} 
\lbl{dim.4}
For the proof of Corollary~\ref{1.9}, we need the following expression for the Verlinde
formula (\ref{Verl}) which is computed using  the residue theorem.  This computation is
well-known in the case $c=0$ (see 
the references in \cite{BHMV2}.) 
 If $c\neq 0$, however, an
additional binomial coefficient appears in the formula, which we have
not seen in the literature. Therefore we sketch the computation
here. We use the notation  
\begin{equation}\lbl{sinh}
\mathrm{s}(t)=\frac{\sinh(t)}{t}= \sum_{k=0}^\infty \frac {t^{2k}}{(2k+1)!}~.
\end{equation}
\begin{prop}\lbl{6.6} (Residue formula for $D_g^{(2c)}$.)  For 
$g\geq 1$,
one has
\begin{equation}\lbl{f42}
D_g^{(2c)} = \frac{(-p)^g}{2}  
\Bigg(
4^{1-g}\frac{2c+1}{p}   \res_{t=0}\left(\frac{2pt}{e^{2pt}-1} 
\frac{\mathrm{s}\left((2c+1)t\right)}{\mathrm{s}(t)^{2g-1}}
 \frac{dt}{t^{2g-1}}\right)
- \binom{c+g-1}{2g-2}
\Bigg).
\end{equation}
\end{prop}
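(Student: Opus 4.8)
The plan is to derive (\ref{f42}) from the Verlinde formula (\ref{Verl}) by a contour integration: the finite trigonometric sum becomes the residue at $t=0$ of a single meromorphic function, and the extra term $\binom{c+g-1}{2g-2}$ appears as a boundary contribution that is present exactly when the relevant integrand fails to decay. First I would pass from $\sin$ to $\sinh$. Putting $t=i\pi j/p$ and using $\sinh(i\theta)=i\sin\theta$ gives $\sin\!\big((2c+1)\pi j/p\big)\,\big(\sin(\pi j/p)\big)^{1-2g}=(-1)^{g-1}\phi(i\pi j/p)$, where $\phi(t)=\sinh\!\big((2c+1)t\big)\big/(\sinh t)^{2g-1}$. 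Since the summand in (\ref{Verl}) is invariant under $j\mapsto p-j$, one may replace $\sum_{j=1}^{(p-1)/2}$ by $\tfrac12\sum_{j=1}^{p-1}$, so that $D_g^{(2c)}=(p/4)^{g-1}\tfrac{(-1)^{g-1}}{2}\sum_{j=1}^{p-1}\phi(i\pi j/p)$.

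Next I would realize this sum as a sum of residues of $\Psi(t):=\frac{2p}{e^{2pt}-1}\,\phi(t)$. This function is $i\pi$-periodic, and $\frac{2p}{e^{2pt}-1}$ has a simple pole of residue $1$ at each $i\pi j/p$ ($j\in\BZ$); hence $\Psi$ has a simple pole of residue $\phi(i\pi j/p)$ whenever $p\nmid j$, and a pole of order $2g-1$ at the points of $i\pi\BZ$. Integrating $\Psi(t)\,dt$ counterclockwise around the rectangle with vertices $\pm R+i\varepsilon$ and $\pm R+i(\pi+\varepsilon)$ (with $0<\varepsilon<\pi/p$ and $R>0$ arbitrary), the poles inside lie at $i\pi j/p$ for $j=1,\dots,p$, and the one at $t=i\pi$ contributes $\res_{t=0}\Psi$ by periodicity. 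The two horizontal edges cancel by periodicity. On each vertical edge I would expand $\Psi$ as a Laurent series in $e^{2t}$ (when $\mathrm{Re}(t)<0$) or in $e^{-2t}$ (when $\mathrm{Re}(t)>0$), using
\[
\frac{2p}{e^{2pt}-1}=-2p\sum_{l\ge0}e^{2plt},\qquad \frac{1}{(\sinh t)^{2g-2}}=2^{2g-2}\sum_{n\ge0}\binom{2g-3+n}{n}e^{(2g-2+2n)t}
\]
(valid for $\mathrm{Re}(t)<0$; the expansions for $\mathrm{Re}(t)>0$ are analogous), together with $\sinh\!\big((2c+1)t\big)/\sinh t=\sum_{k=-c}^{c}e^{2kt}$. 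Integrating such a series over a vertical segment of imaginary length $\pi$ annihilates every mode $e^{\pm 2mt}$ with $m\ne0$; since $g\ge2$ and $0<2c+1<p$ no constant mode occurs on the right edge, while the left edge contributes $-i\pi a_0$, where $a_0$ is the constant coefficient of the expansion valid for $\mathrm{Re}(t)<0$. The residue theorem then gives $\sum_{j=1}^{p-1}\phi(i\pi j/p)=-\tfrac12 a_0-\res_{t=0}\Psi$.

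Finally I would identify the two terms on the right. The vanishing condition $pl+k+(g-1)+n=0$ forces $l=0$, and the hockey-stick identity $\sum_{n=0}^{\,c+1-g}\binom{2g-3+n}{n}=\binom{c+g-1}{2g-2}$ (an empty sum, hence $0$, precisely when $c+g-1<2g-2$) yields $a_0=-2^{2g-1}p\binom{c+g-1}{2g-2}$. Moreover the identity $\dfrac{\mathrm{s}\!\big((2c+1)t\big)}{\mathrm{s}(t)^{2g-1}}\,\dfrac{1}{t^{2g-1}}=\dfrac{\phi(t)}{(2c+1)\,t}$ shows that the residue occurring in (\ref{f42}) equals $\dfrac{1}{2c+1}\res_{t=0}\Psi$. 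Substituting $-\tfrac12 a_0=2^{2g-2}p\binom{c+g-1}{2g-2}$ and $\res_{t=0}\Psi=(2c+1)\times(\text{residue in }(\ref{f42}))$ into the formula from the previous step, multiplying through by $(p/4)^{g-1}(-1)^{g-1}/2$, and simplifying with $(p/4)^{g-1}2^{2g-2}=p^{g-1}$ and $(-1)^{g-1}=-(-1)^g$, one recovers (\ref{f42}) exactly.

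The step I expect to be the main obstacle is the contour estimate, because $\Psi$ does \emph{not} decay along the left edge once $c\ge g-1$, so one cannot naively let $R\to\infty$. The resolution is that the vertical-edge integrals are in fact exactly computable for any fixed $R$, since integrating a Fourier series over a full period kills all non-constant modes; the single surviving constant mode on the left edge is precisely the origin of the correction term $\binom{c+g-1}{2g-2}$, which is why that term drops out when $c<g-1$ (and in particular when $c=0$, recovering the classical residue form of the Verlinde formula).
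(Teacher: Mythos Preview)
Your argument is correct and complete: the conversion from $\sin$ to $\sinh$, the rectangular contour with the periodicity cancellation, the termwise integration of the Laurent expansions on the vertical edges, and the final bookkeeping all check out. In particular, your handling of the non-decaying case $c\ge g-1$ by computing the edge integrals exactly for fixed $R$ (rather than sending $R\to\infty$) is sound, since the relevant series converge absolutely on each vertical segment.

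The paper takes a closely related but more compact route. It applies the residue theorem directly on the Riemann sphere to the rational $1$-form
\[
\frac{(z^{2c+1}-z^{-2c-1})\,dz}{z(z^{p}-1)(z-z^{-1})^{2g-1}},
\]
using that the sum of \emph{all} residues on $\mathbb{P}^1$ vanishes. The simple poles at the nontrivial $p$-th roots of unity reproduce the Verlinde sum, the poles at $z=\pm 1$ give the $\res_{t=0}$ term, and the pole at $z=0$ gives the binomial coefficient $\binom{c+g-1}{2g-2}$ (absent when $c=0$). Your approach is the logarithmic version of this: under $z=e^{2t}$ the Riemann sphere becomes a cylinder, the pole at $z=0$ becomes the behavior as $\mathrm{Re}(t)\to -\infty$, and your constant Fourier mode $a_0$ on the left edge is exactly the residue at $z=0$ in disguise. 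What the paper's formulation buys is economy: there is no contour to choose, no edge integral to compute, and no convergence issue to manage, since the global residue theorem on $\mathbb{P}^1$ replaces all of that. What your approach buys is that it makes the analytic mechanism behind the extra binomial term completely explicit and stays entirely in the $t$-variable in which formula~(\ref{f42}) is stated.
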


\begin{proof} ({\em Cf.} \cite[p.~914]{BHMV2}.) We apply the residue theorem to the
  meromorphic $1$-form $$\frac{(z^{2c+1}-z^{-2c-1})dz}{z(z^p-1)(z-z^{-1})^{2g-1}}~.$$
  This form has simple poles at all non-trivial $p$-th roots
  of unity, and the sum of the residues at these poles is 
  $-2D_g^{(2c)}/(-p)^g$
   by the Verlinde formula (\ref{Verl}). The poles
  at $z=\pm 1$ give rise to the residue term in (\ref{f42}), and the
  pole at $z=0$ gives rise to the binomial coefficient in
  (\ref{f42}). (This term would be absent in the case $c=0$.) There are no other poles. Thus the result follows
  from the residue theorem.
\end{proof}

Using the power series expansions for
$t/(e^t-1)$ in (\ref{bernn}), and for 
$\mathrm{s}(t)$ in (\ref{sinh}), it is easy to see from
(\ref{f42}) that for  $g\geq 2$,  $D_g^{(2c)}$ is a polynomial of total degree
$3g-2$ in $p$ and $c$. Moreover, it is easy to get an explicit formula
for the leading order terms. One finds
that for 
$g\geq 2$, 
 and in degrees $\geq 3g-3$, the polynomial $D_g^{(2c)}/2$ is given by the
expression appearing on the right hand side of Eq.~(\ref{1.99}) in Corollary~\ref{1.9}.
 
Corollary~\ref{1.9} follows from this by observing that both $\fe_g^{(2c)}$ and
  $\fo_g^{(2c)}$ coincide with $D_g^{(2c)}/2$ up to addition  of some polynomial
  of degree $2g-1$, by Theorem~\ref{deltaprop}.

\subsection{A residue formula for  $\delta_g^{(2c)}$.} 
\lbl{7.5}

Here is
an analog of Proposition \ref{6.6} for $\delta_g^{(2c)}$.

\begin{prop}\lbl{nres}  
 (Residue formula for $\delta_g^{(2c)}$.) 
For $g\geq 1$, one has

\begin{align} \lbl{nresid}
 \delta_g^{(2c)}=  \frac{(-1)^g}{2} 
\Bigg(
 \frac{4^{1-g}}{p}  \res_{t=0} &
\left(
\frac{2pt}{e^{2pt}+1} 
\frac{\cosh((2c+1)t) \cosh (t)} {\mathrm{s}(t)^{2g}} 
\frac{dt}{t^{2g+1}} 
\right) \\
& +\frac{2c+1}{2g-1} \binom{c+g-1}{2g-2}  
\Bigg). \notag
\end{align}

\end{prop}

\begin{proof}  Consider the
  meromorphic $1$-form $$\frac{(z^{2c+1}-z^{-2c-1})(z-z^{-1})dz}{z(z^p-1)(z+z^{-1})^{2g}}~.$$
  This form has simple poles at all non-trivial $p$-th roots
  of unity, and the sum of the residues at these poles is 
  $(-1)^{c+1} 2\delta_g^{(2c)}$
   by   
formula (\ref{d-Verl}). The poles
  at $z=\pm i$ give rise to the residue term in (\ref{nresid}), and the
  pole at $z=0$ gives rise to the  
binomial coefficient term 
in
  (\ref{nresid}).
    There are no other poles. Thus the result follows, as before,
  from the residue theorem.
\end{proof}

\begin{rem}{\em Formula (\ref{nresid}) shows again that $\delta_g^{(2c)}$
  is a polynomial in $c$ and $p$ of total degree
  $2g-1$. Using 
\begin{equation}\lbl{formula42}
\frac t {e^t+1}= \sum_{n=1}^\infty (1-2^n)B_n
\frac {t^n} {n!}
\end{equation}
(as follows easily from (\ref{bernn})), one can write down an explicit
formula for this polynomial in terms of Bernoulli numbers. This could
be used to give another proof of Theorem~\ref{deltaprop}. Below, we
make two more 
remarks about this polynomial. 

First, the contribution to $\delta_g^{(2c)}$ coming from the residue
term in (\ref{nresid}) is of the form $p$ times a polynomial in
$(2c+1)^2$
 and $p^2$. (This is because both
  $\cosh (t)$ and $\mathrm{s}(t)$ are even functions of $t$; note also
  that there
  is no constant term in (\ref{formula42}).) Thus, any monomial  $c^n p^m$  
      appearing  
    in $\delta_{g}^{(2c)}$ has $m$ odd or
    zero. (See for example Formula (\ref{fo}).)

Second, the contribution to $\delta_g^{(2c)}$ coming from the binomial
coefficient is zero for $c<g-1$. In particular, for $g\geq 2$, the
one-variable polynomial $\delta_{g}$ obtained by putting $c=0$ in $\delta_{g}^{(2c)}$ is
an odd polynomial in $p$. We claim that for $g\geq 2$ this polynomial
$\delta_{g}$ is always divisible by 
 $$\delta_2=\fe_2=D_2= p(p^2-1)/24~.$$ (See for example Formulas  
(\ref{f23})---(\ref{f25})). This can be seen as
 follows. First, divisibility by $p$ is clear since $\delta_{g}$ is
an odd polynomial in $p$.  
Next, running the
residue computation in the proof of Proposition~\ref{nres} backwards
in the special case $c=0$, $p=1$, one finds that
the right hand side of (\ref{nresid}) is zero in this case. Thus the
polynomial $\delta_{g}$ is divisible by $p-1$. But since
$\delta_{g}$ is odd, it must then also be divisible by $p+1$. This
proves the claim.

As mentioned in the introduction, we 
believe that the polynomial $\delta_{g}^{(2c)}$ and its specialization
$\delta_{g}$ should have an algebro-geometric interpretation. If so,
there should probably be a geometric reason behind the above-mentioned
properties of these polynomials.
}\end{rem}

\section{Further Comments}\lbl{sec7}

For $g\geq p-1$,  Gow \cite{Go} has constructed  $p-1$ irreducible
modular $K$-representations of the symplectic group $\Sp(2g,K)$, where
$K$ is a field of characteristic $p$. Gow denotes these representations by 
\begin{equation*}\qquad \qquad \qquad \quad V(g,k) \quad \quad (g-p+2
  \leq k \leq g)~.
\end{equation*}
 The representation $V(g,k)$ is a subquotient of $\Lambda^kV$ where
$V\simeq K^{2g}$
is the standard representation of $\Sp(2g,K)$. If $K$ is algebraically
closed, then  $V(g,k)$ is the fundamental module with highest weight
$\omega_k$ \cite[Corollary 2.4]{Go}. 

The dimensions of these representations of $\Sp(2g,K)$ depend only on the
characteristic 
 $p$ of 
the field $K$.
 It turns out that for $p=5$, the dimensions of the
four representations constructed by Gow coincide with the dimensions
of our irreducible factors $F^{odd}(\Si_g(2c))$ and
$F(\Si_g(2c))\slash F^{odd}(\Si_g(2c))$:

\begin{thm}\label{gw}  For $p=5$ and $g\geq 4$, we have $$(\dim
  V(g,g+1-n))_{n=1,2,3,4} = (
  \fe_g^{(0)}(5),\fe_g^{(2)}(5),\fo_g^{(2)}(5),\fo_g^{(0)}(5))~.$$
\end{thm}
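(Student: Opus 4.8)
The plan is to turn Theorem~\ref{gw} into a finite verification by producing closed expressions for both sides. On the TQFT side, observe that for $p=5$ one has $d=2$, so the trunk color $2c$ is $0$ or $2$ and there are only four quantities to compute: $\fe_g^{(0)}(5)$, $\fe_g^{(2)}(5)$, $\fo_g^{(2)}(5)$, $\fo_g^{(0)}(5)$. First I would feed $d=2$ into Lemma~\ref{lem71} and into the recursion formulas (\ref{f-ev})--(\ref{f-odd}): the values $\be(a,c),\up(a,c)$ for $a,c\in\{0,1\}$ are immediate, and the two recursions collapse to a single linear recursion $w_{g+1}=Mw_g$ for the vector $w_g=(\fe_g^{(0)},\fe_g^{(2)},\fo_g^{(2)},\fo_g^{(0)})(5)$, where $M$ is twice the identity plus the adjacency matrix of the path graph on four vertices, with $w_1=(2,1,0,0)$ by (\ref{f16}). (Equivalently one reads the same recursion off the Verlinde formula (\ref{Verl}) and formula (\ref{d-Verl}) of Theorem~\ref{new-Verl} specialized at $p=5$: there $D_g^{(2c)}(5)$ evolves by $M(\KK)$ and $\delta_g^{(2c)}(5)$ by $M(\hKK)$, which for $d=2$ are $2\times2$ matrices.)

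Solving the recursion is routine. The path on four vertices has adjacency eigenvalues $2\cos(k\pi/5)$ for $k=1,2,3,4$, with the usual eigenvectors of entries $\sin(jk\pi/5)$, so $M$ has the four eigenvalues $2+2\cos(k\pi/5)$: these are the roots $(5\pm\sqrt5)/2$ of $x^2-5x+5$ and the roots $(3\pm\sqrt5)/2$ of $x^2-3x+1$. Hence each of the four TQFT dimensions is a $\tfrac12\BZ$-linear combination of the powers $\bigl((5\pm\sqrt5)/2\bigr)^{g}$ (accounting for $D_g^{(2c)}(5)$, a Fibonacci/Lucas-type number) and $\bigl((3\pm\sqrt5)/2\bigr)^{g}$ (accounting for $\delta_g^{(2c)}(5)$). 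In particular, for each fixed choice of factor, the sequence $g\mapsto(\text{TQFT dimension})$ satisfies the order-four linear recursion with characteristic polynomial $(x^2-5x+5)(x^2-3x+1)=x^4-8x^3+21x^2-20x+5$.

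On Gow's side, I would take his description of $V(g,k)$ as a subquotient of $\Lambda^k$ of the standard $\Sp(2g)$-module together with the resulting formula for $\dim V(g,k)$ from \cite{Go}, and specialize $p=5$. The crux is to check that for each fixed $n\in\{1,2,3,4\}$ the function $g\mapsto\dim V(g,g+1-n)$ satisfies the \emph{same} order-four recursion; this should fall out because the ingredients of Gow's formula specialize at $p=5$ to combinations of $\cos(k\pi/5)$, hence to combinations of $g$-th powers of the four eigenvalues above. Granting this, Theorem~\ref{gw} reduces to verifying the four equalities at the base values $g=4,5,6,7$ (all permitted, since Gow requires $g\ge p-1=4$): for instance $w_4=(42,48,27,8)$ must match $(\dim V(4,4),\dim V(4,3),\dim V(4,2),\dim V(4,1))$, and similarly for $g=5,6,7$. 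I expect the main obstacle to be precisely this first part --- extracting from \cite{Go} a sufficiently explicit form of $\dim V(g,k)$ at $p=5$ to recognize that it obeys the order-four recursion; the naive guess $\binom{2g}{k}-\binom{2g}{k-2}$ agrees with our numbers for $g\le7$ but not beyond (already $\binom{16}{8}-\binom{16}{6}=4862\ne4861=\fe_8^{(0)}(5)$), so genuine input from \cite{Go} is needed. A secondary, purely bookkeeping point is to line up which of Gow's four modules corresponds to which factor: the two with $k\equiv g\pmod2$ go with $n=1,3$ and the other two with $n=2,4$, with the larger values $k=g,g-1$ pairing with the even-part dimensions $\fe$ and $k=g-2,g-3$ with the odd-part dimensions $\fo$.
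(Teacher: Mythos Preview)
The paper does not actually prove this theorem: immediately after the statement it says ``This is proved in \cite{GMGow}'' and explains that in \cite{GMGow} the authors derive an explicit Verlinde-type formula for $\dim V(g,k)$ analogous to formulas (\ref{d-Verl}) and (\ref{Verl}). So there is no detailed argument here to compare against, only the indication that the proof proceeds by producing a closed trigonometric formula for Gow's dimensions and matching it to the TQFT side.

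Your strategy is compatible with this. Your analysis of the TQFT side is correct: for $p=5$ the vector $w_g=(\fe_g^{(0)},\fe_g^{(2)},\fo_g^{(2)},\fo_g^{(0)})(5)$ does satisfy $w_{g+1}=Mw_g$ with $M=2I+A_{P_4}$, and your eigenvalue and characteristic-polynomial computations are right, as is your observation that the characteristic-zero Weyl dimension $\binom{2g}{k}-\binom{2g}{k-2}$ agrees with $w_g$ for $4\le g\le 7$ but diverges at $g=8$. Where you leave a genuine gap is exactly where you say you do: you have not established that $g\mapsto \dim V(g,g+1-n)$ obeys the same order-four recursion. Gow's paper \cite{Go} gives the construction of $V(g,k)$ and identifies its highest weight, but it does not contain a closed dimension formula of the kind you need; producing one is precisely the content of \cite{GMGow}. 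Once such a formula of Verlinde type is in hand, your recursion-and-initial-values argument and the direct formula-matching of \cite{GMGow} are essentially equivalent ways to finish. So your proposal is a correct outline, but the substantive missing step is not a routine extraction from \cite{Go}; it is the new result of \cite{GMGow}.
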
 
  This is proved in \cite{GMGow}. It would be interesting to know
  whether these equalities of dimensions 
come from isomorphisms of the corresponding
$\Sp(2g,\BF_p)$-representa\-tions. 
In \cite{GMGow}, we give an explicit formula for the dimensions of the
$V(g,k)$ analogous to formulas (\ref{d-Verl}) and (\ref{Verl}) for
$\delta_g^{(2c)}$ and $D_g^{(2c)}$.  
We remark that for 
every prime $p\geq 5$  
we have {\em a priori} as many irreducible representations of $\Sp(2g,\BF_p)$ as
Gow.\footnote{{\em A priori,} we have $p-1$ irreducible representations of
  $\Sp(2g,\BF_p)$, since we have two of them for each $0\leq c\leq
(p-3)/2$). For $g\geq 3$, these representations are all
non-trivial, but in principle some of them could be isomorphic,
although we don't expect this to happen.} But it appears that 
for $p>5$  the dimensions  of the $V(g,k)$ and the dimensions of our  irreducible
factors are different.

\end{document}